\documentclass[11pt]{amsart}

\usepackage{amsmath,amssymb,amsthm,mathtools}
\usepackage{enumitem}
\usepackage{hyperref}
\usepackage[margin=1.05in]{geometry}

\usepackage{graphicx}

\hypersetup{colorlinks=true,linkcolor=blue,citecolor=blue,urlcolor=blue}

\newtheorem{theorem}{Theorem}[section]
\newtheorem{proposition}[theorem]{Proposition}
\newtheorem{lemma}[theorem]{Lemma}
\newtheorem{corollary}[theorem]{Corollary}
\newtheorem{remark}[theorem]{Remark}
\newtheorem*{remark*}{Remark}

\theoremstyle{definition}

\newtheorem*{definition*}{Definition}

\newcommand{\D}{\mathbb D}
\newcommand{\R}{\mathbb R}

\newcommand{\Ai}{\operatorname{Ai}}
\newcommand{\norm}[1]{\left\lVert #1\right\rVert}

\title{Exact $L^p$ growth rates of Laplace eigenfunctions on the unit disk}
\author{Haoyu Cheng}
\begin{document}

\begin{abstract}
We determine the logarithmic growth exponents of the $L^p$ norms, $1\le p\le\infty$, of $L^2$-normalized Laplace eigenfunctions on the unit disk, for both Dirichlet and Neumann boundary conditions. We also prove sharp uniform $L^p$ upper and lower bounds for every $L^2$-normalized Dirichlet eigenfunction and every non-constant Neumann eigenfunction $u_{\lambda}$ on the disk. The proof uses stationary phase estimates and integral estimates for Bessel functions.
\end{abstract}

\maketitle

\section{Introduction and main results}

Let
\[
\D=\{(x,y)\in\R^2:x^2+y^2<1\}
\]
be the unit disk. In this paper we consider $L^2$-normalized eigenfunctions of $-\Delta$ on $\D$ with either Dirichlet or Neumann boundary condition. Our first theorem gives the sharp uniform upper and lower bounds for their $L^p$ norms for all $1\le p\le\infty$.

\begin{theorem}\label{thm:uniform}
Let $1\le p\le\infty$. Then there exist constants $c_p,C_p>0$ such that for every $L^2$-normalized Dirichlet eigenfunction and every non-constant $L^2$-normalized Neumann eigenfunction $u_\lambda$ satisfying
\[
-\Delta u_{\lambda}=\lambda u_{\lambda},\qquad \norm{u_{\lambda}}_{L^2(\D)}=1,
\]
\[
u_{\lambda}\vert_{\partial\D}=0\quad(\text{Dirichlet})\qquad\text{or}\qquad
\left.\partial_\nu u_\lambda\right|_{\partial \D}=0 \quad(\text{Neumann})
\]
one has
\[
c_p\lambda^{a_p}\le \norm{u_\lambda}_{L^p(D)}\le C_p\lambda^{b_p}.
\]
For $p=\infty$,
\[
 a_\infty=\frac1{12},\qquad b_\infty=\frac14.
\]
For $1\le p<\infty$,
\[
a_p=
\begin{cases}
\dfrac{p-2}{6p},&1\le p<2,\\[8pt]
0,&2\le p\le4,\\[8pt]
\dfrac{p-4}{12p},&4< p<\infty,
\end{cases}
\qquad b_p=
\begin{cases}
0,&1\le p<2,\\[8pt]
\dfrac{p-2}{6p},&2\le p\le8,\\[8pt]
\dfrac14-\dfrac1p,&8< p<\infty.
\end{cases}
\]
The exponents $a_p$ and $b_p$ are sharp.
\end{theorem}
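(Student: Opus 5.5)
We work entirely through separation of variables. Every eigenspace on $\D$ is spanned by $J_n(j r)\cos n\theta$, $J_n(jr)\sin n\theta$, where $j=\sqrt\lambda$ is a zero of $J_n$ (Dirichlet) or of $J_n'$ (Neumann). Distinct $n$ can share an eigenvalue only exceptionally, and Bourget-type results for Bessel zeros exclude this. So we may take $u_\lambda=c\,J_n(jr)\,e^{\pm in\theta}$, or a real combination $J_n(jr)(\alpha\cos n\theta+\beta\sin n\theta)$. Up to constants depending only on $p$, the angular factor contributes a fixed $L^p(S^1)$ norm uniformly in $n\ge1$: for the real combination it is a rotated $\cos n\theta$, whose $L^p$ norm is independent of $n$, and $n=0$ is trivial. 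Hence
\[
\norm{u_\lambda}_{L^p(\D)}\asymp \frac{\bigl(\int_0^1 |J_n(jr)|^p r\,dr\bigr)^{1/p}}{\bigl(\int_0^1 J_n(jr)^2 r\,dr\bigr)^{1/2}},
\]
and the problem becomes a two-parameter family of one-dimensional Bessel integral estimates in the regime $n<j$. We then substitute $s=jr$ and set $\nu=n$, $x=j$.

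The key step is uniform asymptotics of $J_\nu(s)$ on $[0,x]$, which we split into three regions. In the evanescent region $s<\nu-C\nu^{1/3}$, $J_\nu$ decays exponentially. In the transition region $|s-\nu|\lesssim\nu^{1/3}$ we have $J_\nu(s)\approx \nu^{-1/3}\Ai(-(2/\nu)^{1/3}(s-\nu))$. In the oscillatory region $s>\nu+C\nu^{1/3}$ we use the Debye form $J_\nu(s)\approx \sqrt{2/\pi}\,(s^2-\nu^2)^{-1/4}\cos(\phi(s))$, obtained by stationary phase on the integral representation. For the $L^p$ integrals of the oscillatory part, we replace $|\cos\phi|^p$ by its mean; this is justified because $\phi'=\sqrt{s^2-\nu^2}/s$ is large relative to the variation of the amplitude away from the turning point. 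It gives
\[
\int_{\nu}^{x}|J_\nu(s)|^p\,s\,ds\asymp \int_{\nu+\nu^{1/3}}^{x}(s^2-\nu^2)^{-p/4}s\,ds+\nu^{-p/3}\cdot\nu^{1/3}\cdot\nu .
\]
The $L^2$ normalization follows from the explicit Lommel formula: $\int_0^x J_\nu^2 s\,ds=\tfrac{x^2}{2}J_\nu'(x)^2$ (Dirichlet) or $\tfrac12(x^2-\nu^2)J_\nu(x)^2$ (Neumann). The boundary values come from the same asymptotics, giving a norm $\asymp x\sqrt{x^2-\nu^2}$ up to constants, uniformly in either case. The Neumann case with $j$ close to $\nu$, i.e.\ $j'_{\nu,1}\approx\nu+0.8\nu^{1/3}$, sits inside the Airy region and is handled by the Airy formulas; the constants stay uniform because zeros of $\Ai'$ and $\Ai$ are separated.

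Writing $x=\nu+t$ with $\nu^{1/3}\lesssim t$ (Airy scale allowed), we compute the resulting ratio explicitly as a function of $(\nu,t)$. We then maximize and minimize its exponent over the admissible range $0\le\nu\le x$, $x\asymp\lambda^{1/2}$. Heuristically the extremes are the following. Radial modes ($\nu=0$) concentrate at the origin and give $\lambda^{1/4-1/p}$, which dominates for $p>8$. Whispering gallery modes ($\nu\approx x$, $t\approx\nu^{1/3}$) concentrate in a shell of width $\lambda^{-1/3}$ and give $\lambda^{(p-2)/(6p)}$: this is the upper bound for $2\le p\le8$ and the lower bound for $p<2$. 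Modes with $\nu\approx x/2$ are spread out and give $\lambda^0$. For $p>4$, the interior contributions with $t\sim\nu^{1/3}$ of the form $\lambda^{(p-4)/(12p)}$ give the lower-bound exponent, obtained by minimizing over $t$. For $p=\infty$ we directly maximize $|J_\nu|$: this is $\nu^{-1/3}$ at the turning point, versus $1$ at $\nu=0$. Sharpness follows by exhibiting these families, namely $n=0$, $n=\ell$ with the first zero, and intermediate $n$.

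The main obstacle is making the oscillatory-averaging and Airy matching uniform in both $\nu$ and $x$ simultaneously. This matters especially near $t\sim\nu^{1/3}$ and for small $\nu$, where the Debye expansion degenerates. There we intend to rely on Olver's uniform asymptotic expansion of $J_\nu(\nu z)$ in terms of $\Ai$, with error terms bounded uniformly, together with a lower bound on $|\cos\phi|^p$ averages over each period.
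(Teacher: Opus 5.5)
Your proposal is correct and follows essentially the same route as the paper: separation of variables with the Lommel normalization, then a three-region analysis of $\int_0^x|J_\nu(s)|^p s\,ds$ (decay below $s=\nu$, an $O(\nu^{1/3})$ Airy window contributing $\nu^{4/3-p/3}$, and Debye asymptotics with a per-period lower bound beyond it), then optimization over $(\nu,x)$ with the same sharpness families; the main difference is that you invoke Olver's uniform Airy expansion where the paper uses an elementary stationary-phase bound with error $O(\Lambda_n^{-1/10})$, an ODE argument on $[n,n+\delta n^{1/3}]$, and the Lavoie--Poliquin zero estimates. Two slips to correct when you write it out: at a zero the Lommel integral $\int_0^x J_\nu^2 s\,ds$ is $\asymp\sqrt{x^2-\nu^2}$, not $x\sqrt{x^2-\nu^2}$; and for $p>4$ the minimizing family is $x-\nu\asymp\nu$ (e.g.\ $m\asymp n$, where the caustic window dominates $I_p$ and the norm is $\lambda^{(p-4)/(12p)}$, so the $\nu\approx x/2$ modes give $\lambda^0$ only for $p\le4$, up to a logarithm at $p=4$), not $x-\nu\sim\nu^{1/3}$, which is the whispering-gallery maximizer.
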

The upper and lower bounds, as functions of $1/p$, are shown in Figure \ref{fig:uniform-exponents}.
\begin{figure}[htbp]
\centering
\includegraphics[width=1\textwidth]{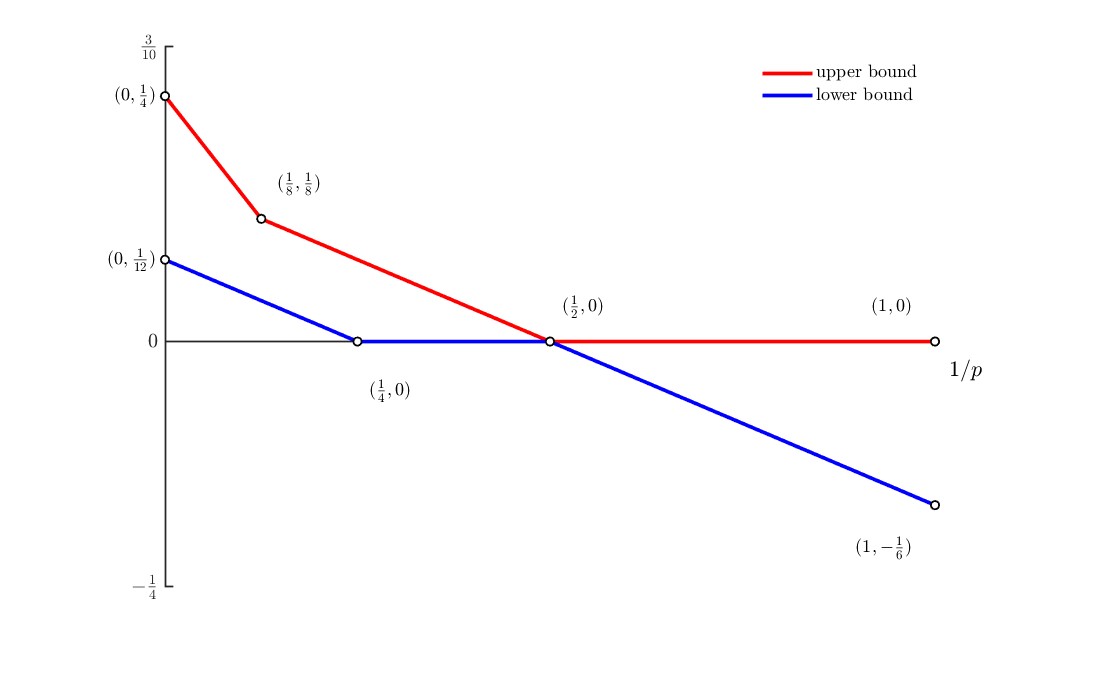}
\caption{The upper and lower exponents in Theorem \ref{thm:uniform} as functions of $1/p$. The red curve represents the upper exponent $b_p$, and the blue curve represents the lower exponent $a_p$.}
\label{fig:uniform-exponents}
\end{figure}

\begin{remark*}
The upper bounds in Theorem \ref{thm:uniform} are known. For $2\le p\le\infty$ they follow from the sharp two-dimensional estimates on compact manifolds with boundary of Smith and Sogge \cite{SmithSogge2007Boundary}, and for $1\le p<2$ the upper bound is an immediate consequence of $\norm{u_\lambda}_{L^2(\D)}=1$. The lower bound is also standard for $2\le p\le4$ by finite volume of $\D$ and the $L^2$-normalization, and for $1\le p<2$ it follows by interpolating $L^2$ between $L^p$ and the known $L^4$ upper bound. In the endpoint $p=\infty$, Lavoie and Poliquin \cite{Lavoie2020GrowthRO} proved global lower bounds of order $\lambda^{1/18-\varepsilon}$ for Dirichlet eigenfunctions and $\lambda^{1/12-\varepsilon}$ for Neumann eigenfunctions, for every $\varepsilon>0$. The present theorem improves their results to $\lambda^{1/12}$ for both boundary conditions. Thus, the new contribution is the uniform lower bounds for $4<p\le\infty$ and the sharpness of the exponents for all $1\le p\le\infty$.
\end{remark*}

\subsection{Detailed study of asymptotics of eigenfunctions}

Let $J_n$ be the Bessel function of the first kind. We denote by $j_{n,m}$ the $m$-th positive zero of $J_n$, and by $\rho_{n,m}$ the $m$-th positive zero of $J_n'$.

The disk eigenfunctions separate in polar coordinates. It is known that each eigenspace has dimension at most $2$ for both boundary conditions \cite{ashu2013some}. For the Dirichlet problem the nonzero eigenvalues are $\lambda^D_{n,m}=j_{n,m}^2$, while for the Neumann problem, excluding the constant eigenfunction, they are $\lambda^N_{n,m}=\rho_{n,m}^2$.

For $n\ge1$ we use the following normalized representatives:
\begin{equation}\label{eq:dir-eigenfunction}
u^D_{n,m}(r,\theta)=\sqrt{\frac2\pi}\,\frac{J_n(j_{n,m}r)}{|J_{n+1}(j_{n,m})|}\cos(n\theta),
\end{equation}
and
\begin{equation}\label{eq:neu-eigenfunction}
u^N_{n,m}(r,\theta)=\left(\frac{2}{\pi\left(1-\frac{n^2}{\rho_{n,m}^2}\right)}\right)^{1/2}\frac{J_n(\rho_{n,m}r)}{|J_n(\rho_{n,m})|}\cos(n\theta).
\end{equation}

For $n \ge 1$, every element of the corresponding two-dimensional eigenspace is obtained by replacing $\cos(n\theta)$ with
\[
a\cos(n\theta)+b\sin(n\theta),\qquad a,b\in\mathbb C.
\]
For each $1\le p\le\infty$, the $L^p(0,2\pi)$ norm of this angular factor is uniformly comparable to its $L^2(0,2\pi)$ norm:
\[
c_p\norm{a\cos(n\theta)+b\sin(n\theta)}_{L^2(0,2\pi)}
\le \norm{a\cos(n\theta)+b\sin(n\theta)}_{L^p(0,2\pi)}
\le C_p\norm{a\cos(n\theta)+b\sin(n\theta)}_{L^2(0,2\pi)}.
\]
Indeed, by the change of variables $\phi=n\theta$, this follows from equivalence of norms on the two-dimensional space $\operatorname{span}_{\mathbb C}\{\cos\phi,\sin\phi\}$. Hence every $L^2$-normalized eigenfunction in the same eigenspace has $L^p(\D)$ norm comparable to that of the representative above. Thus it is enough to study the representatives above.

For $n = 0$ the eigenspace is one-dimensional. Up to multiplication by a constant of modulus $1$, the normalized eigenfunctions are
\[
u^D_{0,m}(r,\theta)=\frac{1}{\sqrt{\pi}}\frac{J_0(j_{0,m}r)}{|J_1(j_{0,m})|},\qquad
u^N_{0,m}(r,\theta)=\frac{1}{\sqrt{\pi}}\frac{J_0(\rho_{0,m}r)}{|J_0(\rho_{0,m})|}.
\]

When $\lambda_{n,m}\to\infty$, we must investigate how both $m,n$ vary.

\begin{definition*}
Fix $1\le p\le\infty$, and let $B\in\{D,N\}$ be the type of boundary condition. For $\gamma\in[0,\infty)$, define $E^{B,p}_\gamma(\mathbb D)$ by
\[
E^{B,p}_\gamma(\mathbb D):=\left\{\ell\in\mathbb R:\,
\begin{array}{l}
\text{there exists a sequence }(n_k,m_k)_{k\ge1}\subset\mathbb Z_{\ge0}\times \mathbb Z_{\ge1}, \\[2pt]
\text{with } n_k\ge2,\ \lambda^B_{n_k,m_k}\to\infty,\text{ such that} \\[2pt]
\dfrac{\log m_k}{\log n_k}\to\gamma,\quad\text{and}\quad
\dfrac{\log \norm{u^B_{n_k,m_k}}_{L^p(\D)}}{\log \lambda^B_{n_k,m_k}}\to \ell
\end{array}
\right\}.
\]
For $\gamma=\infty$, we define $E^{B,p}_\infty(D)$ using sequences satisfying either $n_k\ge 2,\ \frac{\log m_k}{\log n_k}\to\infty$, or $n_k\in\{0,1\}\ ,\ m_k\to\infty$.
\end{definition*}

With this notation, the problem of determining the possible growth rates of normalized eigenfunctions is reduced to describing the limit point sets $E^{B,p}_\gamma(\mathbb D)$. The next theorem shows that, once the logarithmic relation between $m$ and $n$ is fixed, the logarithmic ratio between the $L^p$ norm and the eigenvalue has a unique limit.

\begin{theorem}\label{thm:profile}
Let $1\le p\le\infty$. For every $\gamma\in[0,\infty]$, both $E^{D,p}_\gamma(\mathbb D)$ and $E^{N,p}_\gamma(\mathbb D)$ are singleton sets, and
\[
E^{D,p}_\gamma(\mathbb D)=E^{N,p}_\gamma(\mathbb D)=\{\Phi_p(\gamma)\},
\]
where
\[
\Phi_p(\gamma):=
\begin{cases}
\dfrac{(1-\gamma)(p-2)}{6p},
&0\le \gamma<1, \quad 1\le p<4,\\[8pt]
\dfrac16-\dfrac1{3p}-\dfrac{\gamma}{12},
&0\le \gamma<1, \quad 4\le p\le\infty,\\[8pt]
0,
&1\le \gamma\le\infty, \quad 1\le p<4,\\[8pt]
\dfrac14-\dfrac1p-\dfrac1{6\gamma}+\dfrac{2}{3p\gamma},
&1\le \gamma\le\infty, \quad 4\le p\le\infty.
\end{cases}
\]
\end{theorem}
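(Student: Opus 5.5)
The plan is to reduce everything to one-dimensional radial integrals and to compute two-sided asymptotics for
\[
I_p(n,m):=\int_0^1 |J_n(\kappa r)|^p\, r\,dr,\qquad \kappa\in\{j_{n,m},\rho_{n,m}\},
\]
up to constants depending only on $p$. By the angular comparability discussed above, $\norm{u^B_{n,m}}_{L^p(\D)}\asymp_p N_{n,m}\, I_p(n,m)^{1/p}$, where $N_{n,m}$ is the normalizing factor (for $p=\infty$ we use $\sup_{[0,1]}|J_n(\kappa r)|$ instead). The first step is to locate $\kappa$ in terms of $n$ and $m$. From the uniform (Olver) asymptotics of Bessel zeros we have $\kappa-n\asymp n^{1/3}m^{2/3}$ when $m\lesssim n$, and $\kappa\asymp m$ when $m\gtrsim n$. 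Hence $\lambda=\kappa^2$ satisfies $\log\lambda/\log n\to 2\max(1,\gamma)$. We also need the normalization. Using $|J_{n+1}(j_{n,m})|^2\asymp (\kappa^2-n^2)^{-1/2}$ for Dirichlet, and the corresponding Neumann identity $J_n(\rho)^2(1-n^2/\rho^2)\asymp(\rho^2-n^2)^{-1/2}$, we get $N_{n,m}\asymp(\kappa^2-n^2)^{1/4}$ in both cases. This is why the two boundary conditions give the same $\Phi_p$.

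The second step splits $I_p$ into the three regimes of the Debye/Airy asymptotics for $J_n(\kappa r)$ with $x=\kappa r$:
\begin{itemize}[leftmargin=*]
\item the evanescent region $x<n-Cn^{1/3}$, where $J_n$ decays exponentially and contributes a negligible amount;
\item the Airy transition region $|x-n|\lesssim n^{1/3}$, where $J_n(x)\asymp n^{-1/3}$ on a set of $r$-length $n^{1/3}/\kappa$;
\item the oscillatory region $x>n+Cn^{1/3}$, where $J_n(x)=\big(\tfrac{2}{\pi}\big)^{1/2}(x^2-n^2)^{-1/4}\cos(\cdot)+\text{error}$.
\end{itemize}
In the oscillatory region, averaging $|\cos|^p$ over the many oscillations (the phase derivative is $\sqrt{x^2-n^2}/x$, so there are about $m$ oscillations) gives
\[
\int (x^2-n^2)^{-p/4}\,r\,dr.
\]
Putting $t=x-n$ and using $x^2-n^2\asymp t(n+t)$, this becomes an explicit power integral over $t\in[n^{1/3},\kappa-n]$. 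It converges at $t=\infty$ or is dominated by the endpoint $t\approx n^{1/3}$ according as $p>4$ or $p<4$, with a logarithm at $p=4$, which does not affect the exponents. Comparing this with the Airy piece $n^{-p/3}\cdot n^{1/3}/\kappa$ gives the two-sided bound $I_p\asymp$ (explicit monomial in $n,\kappa$). For $p=\infty$ the same split gives $\sup|J_n|\asymp n^{-1/3}$, attained near $x=n$.

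The third step is pure algebra: insert $\kappa-n\asymp n^{1/3}m^{2/3}$, or $\kappa\asymp m$, with $m=n^{\gamma+o(1)}$, take logarithms, and divide by $\log\lambda$. For example, when $p=\infty$ and $\gamma<1$ we get $N\cdot n^{-1/3}\asymp (n\cdot n^{1/3}n^{2\gamma/3})^{1/4}n^{-1/3}=n^{(2\gamma)/12}$. Dividing by $\log\lambda\sim 2\log n$ should reproduce $\Phi_\infty(\gamma)=\tfrac16-\tfrac{\gamma}{12}$; consistency checks of this kind at $\gamma=0$ and $\gamma\ge1$ will validate the bookkeeping. The case $n\in\{0,1\}$, and more generally $\gamma=\infty$, uses the standard large-argument asymptotics, with the Airy zone replaced by a neighbourhood of the origin of length $1/\kappa$. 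The limit exists along every admissible sequence, so each $E$ is a singleton, and realizing sequences exist trivially.

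The main obstacle is making the oscillatory-region estimate two-sided and uniform in $(n,m)$ across the transition. The upper bound is easy from uniform bounds $|J_n(x)|\lesssim (x^2-n^2)^{-1/4}$ and $|J_n|\lesssim n^{-1/3}$. The lower bound needs the cosine phase to genuinely oscillate, so that $|\cos|^p$ averages to a positive constant on dyadic $t$-blocks; this requires a phase increment of at least order $1$ per block. I would prove this via a stationary-phase / Olver-uniform expansion with error $O(t^{-3/2}n^{1/2})$ relative to the main term, taking $C$ large. For small $m$ (for example $m$ bounded) the oscillatory region is short, and there I would instead use the Airy-function asymptotics directly, since $\Ai$ on a fixed interval has a nonzero $L^p$ norm.
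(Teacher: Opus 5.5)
\textbf{The normalization in your first step is wrong, and it is exactly the factor that produces the $\gamma$-dependence when $\gamma<1$.} You assert $|J_{n+1}(j_{n,m})|^2\asymp(\kappa^2-n^2)^{-1/2}$ and $J_n(\rho_{n,m})^2(1-n^2/\rho_{n,m}^2)\asymp(\rho_{n,m}^2-n^2)^{-1/2}$, and conclude $N_{n,m}\asymp(\kappa^2-n^2)^{1/4}$. Both claims are wrong.
\begin{itemize}
\item In the Dirichlet case, $J_{n+1}(j_{n,m})=-J_n'(j_{n,m})$ has the size of the \emph{derivative} amplitude $(\kappa^2-n^2)^{1/4}/\kappa$, not the amplitude $(\kappa^2-n^2)^{-1/4}$ of $J_n$.
\item In the Neumann case, $J_n(\rho_{n,m})^2\asymp(\rho_{n,m}^2-n^2)^{-1/2}$ is correct, but the factor $1-n^2/\rho_{n,m}^2=(\rho_{n,m}^2-n^2)/\rho_{n,m}^2$ is not $\asymp 1$.
\end{itemize}
The correct statement is Lemma~\ref{lem:normalizing-factors}, together with Lemma~\ref{lem:fixed-turning} for bounded $m$: $N_{n,m}\asymp\kappa(\kappa^2-n^2)^{-1/4}$. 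Your $N_{n,m}$ is too small by the factor $(1-n^2/\kappa^2)^{1/2}$. This is harmless for $\gamma\ge1$, but equals $(m/n)^{1/3}=n^{(\gamma-1)/3+o(1)}$ when $m\le n$. So for every $p$ and every $0\le\gamma<1$ your bookkeeping would output $\Phi_p(\gamma)+\frac{\gamma-1}{6}$ instead of $\Phi_p(\gamma)$.

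Your own sample computation shows this. Dividing $n^{2\gamma/12}$ by $\log\lambda\sim 2\log n$ gives $\gamma/12$, not $\frac16-\frac{\gamma}{12}$. At $\gamma=0$ it would give $\norm{u}_{L^\infty}\asymp 1$ for whispering-gallery modes. That is impossible for an $L^2$-normalized function essentially supported in a collar of area $\asymp n^{-2/3}$. You say the example ``should reproduce'' $\Phi_\infty$ and that the check at $\gamma=0$ will validate the bookkeeping; in fact the check fails.

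The simplest repair within your framework is not to estimate $N_{n,m}$ at the zeros at all. $L^2$-normalization gives exactly $N_{n,m}^{-2}=c\int_0^1 J_n(\kappa r)^2r\,dr$, with $c$ coming from the angular factor. Hence $\norm{u}_{L^p}\asymp_p I_p^{1/p}I_2^{-1/2}$ and $\norm{u}_{L^\infty}\asymp\sup|J_n|\,I_2^{-1/2}$. Your two-sided step two at $p=2$, using $\kappa-n\gtrsim n^{1/3}$, gives $I_2\asymp\kappa^{-2}(\kappa^2-n^2)^{1/2}$. This yields the correct $N$ for both boundary conditions at once, without the derivative asymptotics at zeros that the paper uses. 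Your example then becomes $n(n^{4/3}m^{2/3})^{-1/4}n^{-1/3}=n^{1/3}m^{-1/6}$, giving $\frac{2-\gamma}{12}$ as required.

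Three further slips would also corrupt the exponents once you write out the monomials.
\begin{itemize}
\item The transition-zone piece is $n^{-p/3}\cdot\frac{n^{1/3}}{\kappa}\cdot\frac{n}{\kappa}=n^{4/3-p/3}\kappa^{-2}$, because the weight there is $r\approx n/\kappa$. Dropping the factor $n/\kappa$ changes the answer for $p>4$, $\gamma>1$, where this piece dominates.
\item For $p<4$ the oscillatory power integral is dominated by the upper endpoint $t\approx\kappa-n$, not by $t\approx n^{1/3}$ as your sentence reads. The upper endpoint is the source of the factor $m^{2/3-p/6}$, and hence of $(1-\gamma)$ in $\Phi_p$.
\item For $\gamma=\infty$ with $n\to\infty$, the fixed-order large-argument asymptotics are not uniform in $n$. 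The turning-point zone at $x\approx n$ is still present; it only drops out of the exponent because $\log n=o(\log\kappa)$.
\end{itemize}
Apart from these points, your three-zone decomposition with phase averaging is the same route as Section~\ref{sec:bessel-integrals} of the paper.
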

\begin{remark*}
The terms $1/\gamma$ and $1/p$ are interpreted as zero when $\gamma=\infty$ or $p=\infty$. When $p=\infty$, we confirm the conjecture in \cite{Lavoie2020GrowthRO}. The formulas agree at $p=4$ and at $\gamma=1$.
\end{remark*}
We present the graphs of $\Phi_p(\gamma)$ as a function of $1/p$ for $\gamma=0,1,\infty$ in Figure \ref{fig:profile-exponents}.
\begin{figure}[htbp]
\centering
\includegraphics[width=1\textwidth]{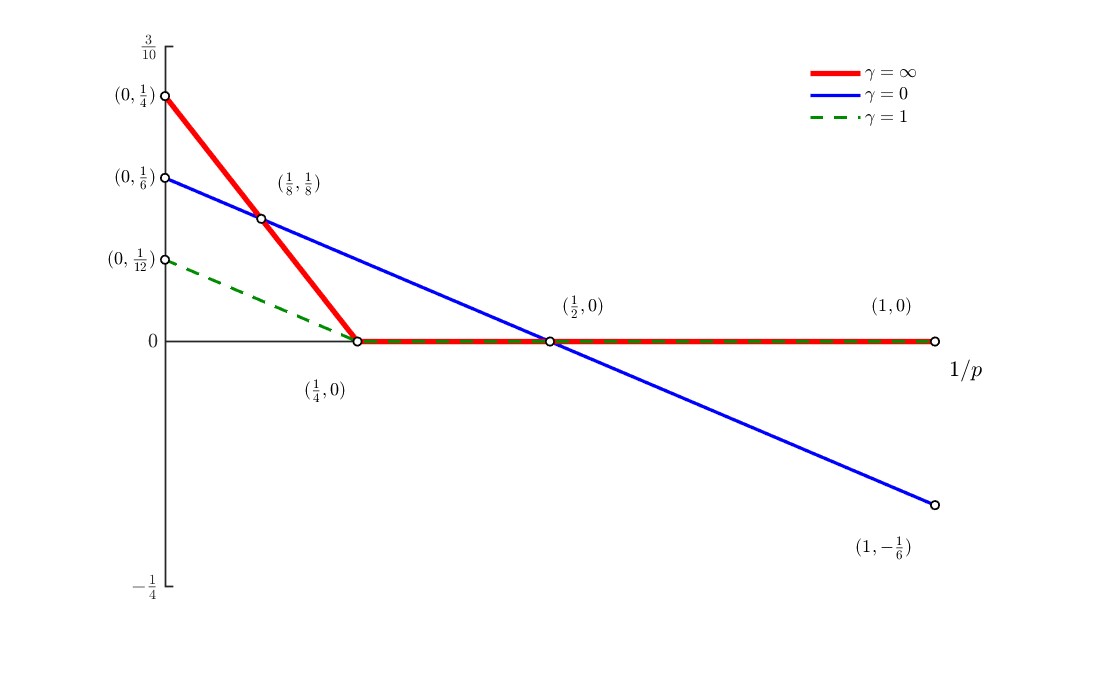}
\caption{$\Phi_p(\gamma)$ as a function of $1/p$ for three representative values of $\gamma$. The blue curve represents $\gamma=0$, the green dashed curve represents $\gamma=1$, and the red curve represents $\gamma=\infty$.}
\label{fig:profile-exponents}
\end{figure}

Let $E^{B,p}(\mathbb D)$ be the set of accumulation points, as $\lambda\to\infty$, of
\[
\frac{\log \norm{u_\lambda}_{L^p(\mathbb D)}}{\log\lambda}
\]
over all $L^2$-normalized eigenfunctions satisfying the boundary condition $B$. When $p=\infty$, this definition was introduced by Sarnak \cite{sarnak2004morawetz} on page 41 of his letter to Morawetz. Theorem \ref{thm:profile} gives the following exact form of $E^{B,p}(\mathbb D)$.

\begin{corollary}
For every $1\le p\le\infty$,
\[
E^{D,p}(\mathbb D)=E^{N,p}(\mathbb D)=[a_p,b_p],
\]
where $a_p$ and $b_p$ are the exponents in Theorem \ref{thm:uniform}.
\end{corollary}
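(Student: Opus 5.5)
The corollary follows from Theorem \ref{thm:profile}, Theorem \ref{thm:uniform}, and a compactness argument. The plan has three parts: reduce to the representatives, show $E^{B,p}(\D)$ is exactly the range of $\Phi_p$ on $[0,\infty]$, and check that this range is $[a_p,b_p]$.

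\emph{Reduction.} Since the $L^p(0,2\pi)$ and $L^2(0,2\pi)$ norms of $a\cos(n\theta)+b\sin(n\theta)$ are comparable with constants depending only on $p$, every $L^2$-normalized eigenfunction $u_\lambda$ in the eigenspace indexed by $(n,m)$ satisfies
\[
\log\norm{u_\lambda}_{L^p}=\log\norm{u^B_{n,m}}_{L^p}+O(1).
\]
After division by $\log\lambda\to\infty$ this error vanishes. Hence $E^{B,p}(\D)$ is the set of limit points of $\log\norm{u^B_{n,m}}_{L^p}/\log\lambda^B_{n,m}$ along sequences with $\lambda^B_{n,m}\to\infty$.

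\emph{Inclusion $E^{B,p}(\D)\subset\Phi_p([0,\infty])$.} Take a sequence realizing some $\ell\in E^{B,p}(\D)$.
\begin{enumerate}[label=(\roman*)]
\item If infinitely many terms have $n_k\in\{0,1\}$, pass to that subsequence. Then $\lambda\to\infty$ forces $m_k\to\infty$, so $\ell\in E^{B,p}_\infty=\{\Phi_p(\infty)\}$.
\item Otherwise $n_k\ge2$ eventually. The ratio $\log m_k/\log n_k$ lies in $[0,\infty]$, which is compact, so a subsequence converges to some $\gamma$, and then $\ell\in E^{B,p}_\gamma=\{\Phi_p(\gamma)\}$.
\end{enumerate}
One point must be checked for the definition of $E_\gamma$ to apply when $n_k$ stays bounded. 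If $n_k\ge2$ is bounded, then $\lambda\to\infty$ forces $m_k\to\infty$, so $\log m_k/\log n_k\to\infty=\gamma$, which is allowed.

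\emph{Reverse inclusion.} For each $\gamma$, Theorem \ref{thm:profile} says $E^{B,p}_\gamma$ is a singleton, in particular nonempty. So some sequence exists, and $\Phi_p(\gamma)\in E^{B,p}(\D)$. This is the one place where care is needed: if Theorem \ref{thm:profile} is read as only saying $E_\gamma\subset\{\Phi_p(\gamma)\}$, I would supply explicit witnesses. For $\gamma<\infty$ take $m_k=\lfloor n_k^\gamma\rfloor$ with $n_k\to\infty$; for $\gamma=\infty$ take $n=0$, $m\to\infty$. Along any such sequence a further subsequence has a convergent norm ratio, because the ratios are bounded by Theorem \ref{thm:uniform}. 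That limit lies in $E_\gamma$, so $E_\gamma$ is nonempty. Thus $E^{B,p}(\D)=\Phi_p([0,\infty])$.

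\emph{Computing the range.} $\Phi_p$ is continuous on $[0,\infty]$: it is affine on $[0,1]$, the two branches agree at $\gamma=1$, and it is affine in $1/\gamma$ on $[1,\infty]$. It is also monotone on each piece, so its image is the interval between the smallest and largest of $\Phi_p(0),\Phi_p(1),\Phi_p(\infty)$.

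For $1\le p<4$:
\[
\Phi_p(0)=\frac{p-2}{6p},\qquad \Phi_p(1)=\Phi_p(\infty)=0.
\]
The range is therefore $[\tfrac{p-2}{6p},0]$ for $p<2$ and $[0,\tfrac{p-2}{6p}]$ for $2\le p<4$, which equals $[a_p,b_p]$.

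For $4\le p\le\infty$:
\[
\Phi_p(0)=\frac16-\frac1{3p},\qquad \Phi_p(1)=\frac1{12}-\frac1{3p},\qquad \Phi_p(\infty)=\frac14-\frac1p.
\]
\begin{itemize}
\item The minimum is $\Phi_p(1)=\tfrac{p-4}{12p}=a_p$, which is $0$ at $p=4$ and $\tfrac1{12}$ at $p=\infty$.
\item For the maximum, compare $\tfrac16-\tfrac1{3p}$ with $\tfrac14-\tfrac1p$. The second is larger exactly when $\tfrac{2}{3p}<\tfrac1{12}$, i.e. $p>8$. So the maximum is $\tfrac{p-2}{6p}$ for $p\le8$ and $\tfrac14-\tfrac1p$ for $p>8$, which is $b_p$; at $p=\infty$ it is $\tfrac14$.
\end{itemize}
Hence $E^{D,p}(\D)=E^{N,p}(\D)=[a_p,b_p]$.
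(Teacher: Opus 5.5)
Your proposal is correct and follows essentially the same route as the paper. It uses a subsequence/compactness argument in $\gamma\in[0,\infty]$ for the inclusion, explicit witness sequences ($m_k=\lfloor n_k^\gamma\rfloor$, and a fixed small $n$ with $m\to\infty$ for $\gamma=\infty$) for the reverse inclusion, and evaluation of $\Phi_p$ at $\gamma=0,1,\infty$ to identify its range with $[a_p,b_p]$; your continuity/monotonicity argument simply makes explicit the paper's ``direct inspection'' step.
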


\begin{proof}
Let a sequence of normalized eigenfunctions with eigenvalues tending to infinity be given. After passing to a subsequence, either $n_k\to\infty$ or $n_k$ is fixed. In the first case we may pass to a further subsequence so that $\log m_k/\log n_k$ converges to some $\gamma\in[0,\infty]$, and Theorem \ref{thm:profile} gives the logarithmic exponent $\Phi_p(\gamma)$. In the second case $m_k\to\infty$, and we have $\gamma=\infty$ by definition. Hence Theorem \ref{thm:profile} gives the exponent $\Phi_p(\infty)$. Thus every accumulation point belongs to the image of $\Phi_p$.

Conversely, for each finite $\gamma\ge0$ one may let $n_k\to\infty$ and take $m_k=\lfloor n_k^\gamma\rfloor$ when $\gamma>0$ and a fixed $m_k$ when $\gamma=0$. For $\gamma=\infty$ one may take $n_k=2$ fixed and let $m_k\to\infty$. Theorem \ref{thm:profile} then realizes all values in the image of $\Phi_p$.

A direct inspection of $\Phi_p$ gives the following ranges. For $p=\infty$ this range is $[1/12,1/4]$. For $1\le p\le2$ it is $[(p-2)/(6p),0]$, for $2\le p\le4$ it is $[0,(p-2)/(6p)]$, for $4<p<8$ it is $[(p-4)/(12p),(p-2)/(6p)]$, and for $8\le p<\infty$ it is $[(p-4)/(12p),1/4-1/p]$. These intervals are exactly $[a_p,b_p]$.
\end{proof}

\begin{remark*}
Let $\partial_\nu$ be the outward normal derivative. The logarithmic size of the natural boundary data is also determined by the same index relation. More precisely, let $1\le p\le\infty$ and $\gamma\in[0,\infty]$, and suppose that the sequence $(n_k,m_k)$ satisfies the index conditions associated with $\gamma$ in the above definition. In the Dirichlet case,
\[
\lim_{k\to\infty} \frac{\log \norm{\partial_\nu u^D_{n_k,m_k}}_{L^p(\partial\D)}}{\log \lambda^D_{n_k,m_k}}=\frac12.
\]
In the Neumann case,
\[
\lim_{k\to\infty} \frac{\log \norm{u^N_{n_k,m_k}}_{L^p(\partial\D)}}{\log \lambda^N_{n_k,m_k}}=
\begin{cases}
\dfrac{1-\gamma}{6},&0\le\gamma\le1,\\[6pt]
0,&1\le\gamma\le\infty.
\end{cases}
\]
These statements follow directly from the explicit formulas and estimates proved later in the paper.
\end{remark*}

\subsection{Related work}

A central theme in spectral theory is to understand how large an eigenfunction can be in terms of its eigenvalue. On a compact Riemannian manifold $(M^d,g)$ without boundary of dimension $d$, the following upper bound \cite{Hormander1968TheSF} is well known
\[
\norm{u_\lambda}_{L^\infty(M)} \le C(M,g)\lambda^{(d-1)/4}.
\]
This estimate is sharp on the round sphere $S^d$, where zonal spherical harmonics attain the exponent $(d-1)/4$. On the other hand, the trivial lower bound
\[
\norm{u_\lambda}_{L^\infty(M)} \ge \operatorname{Vol}(M)^{-1/2}
\]
is sharp on flat tori, where plane waves remain uniformly bounded. Thus, the possible $L^\infty$-growth of normalized eigenfunctions is strongly influenced by the geometry and in fact the dynamical properties of the geodesic flow.

More generally, Sogge \cite{Sogge1988Lp} proved sharp $L^p$ spectral cluster estimates on compact manifolds without boundary. In dimension two, if $-\Delta u_\lambda=\lambda u_\lambda$ and $\norm{u_\lambda}_{L^2}=1$, these estimates give
\begin{equation}\label{eq:boundaryless-estimates}
\norm{u_\lambda}_{L^p}\le C_p
\begin{cases}
\lambda^{\frac18-\frac1{4p}},&2\le p\le6,\\[4pt]
\lambda^{\frac14-\frac1p},&6\le p\le\infty.
\end{cases}
\end{equation}
For compact two-dimensional manifolds with boundary, Smith and Sogge \cite{SmithSogge2007Boundary} proved the corresponding sharp spectral cluster estimates. In the eigenvalue normalization used here, their exponents are
\[
\lambda^{\frac16-\frac1{3p}} \quad(2\le p\le8),\qquad
\lambda^{\frac14-\frac1p} \quad(8\le p\le\infty),
\]
which coincide with the upper exponents in Theorem \ref{thm:uniform} for $p\ge2$. However, this boundary loss is not present for every boundary geometry. For strictly geodesically concave boundary, Grieser \cite{Grieser1992Concave} proved the estimate \eqref{eq:boundaryless-estimates} in dimension two and locally near the concave part of the boundary for Dirichlet boundary condition. Blair, Ford and Marzuola \cite{BlairFordMarzuola2018Polygonal} proved the estimates \eqref{eq:boundaryless-estimates} for any compact planar polygonal domain and for either Dirichlet or Neumann boundary conditions.

This reflects the whispering-gallery concentration near a smooth convex boundary. In the disk this mechanism is exactly the endpoint $\gamma=0$, where the model case is that $m$ remains bounded while $n\to\infty$. The corresponding eigenfunctions concentrate in a boundary collar of thickness $n^{-2/3}\simeq \lambda^{-1/3}$. An $L^2$-normalized function essentially supported in a set of area $\lambda^{-1/3}$ has typical size $\lambda^{1/6}$, and therefore its $L^p$ norm is of size
\[
\lambda^{1/6}\bigl(\lambda^{-1/3}\bigr)^{1/p}=\lambda^{\frac16-\frac1{3p}},
\]
which is precisely the upper branch in Theorem \ref{thm:uniform}.

The most classical and most studied model is the unit sphere. On $\mathbb{S}^2$, the eigenspace of degree $N$ has dimension $2N+1$, and the eigenfunctions are spherical harmonics. This high multiplicity produces a wide range of possible behaviors. Zonal harmonics attain the maximal $L^\infty$-growth $\lambda^{1/4}$, while Gaussian beams have a different concentration mechanism along closed geodesics. For finite $p$, these two model families also account for the sharpness of the two branches in Sogge's $L^p$ estimates on the sphere. At the opposite end, probabilistic and constructive results show that one can find spherical harmonics with much smaller $L^\infty$-growth \cite{VanderKam,burq2013injections,han2025sphericalharmonicspointconfigurations}. These examples show that the sphere has a substantially wider range of possible $L^\infty$ behavior than the disk. This phenomenon is closely tied to the large dimension of the spherical harmonic eigenspaces.

Flat tori form the other basic integrable model. On $\mathbb T^2$, an eigenfunction is a trigonometric polynomial supported on lattice points lying on a circle. Thus the $L^\infty$-norm is connected to the arithmetic of lattice points on circles. In particular, the divisor bound gives
\[
\norm{u_\lambda}_{L^\infty(\mathbb T^2)} \le C_\varepsilon \lambda^\varepsilon
\]
for every $\varepsilon>0$. Bourgain \cite{bourgain1993eigenfunction} initiated the study of eigenfunction bounds on higher-dimensional tori, and more recent work of Germain--Rydin Myerson \cite{germain2022bounds} and Demeter--Germain \cite{DemeterGermain} investigates spectral projectors on thin annuli and narrow windows on tori.

The sphere and the flat torus belong to the class of manifolds with completely integrable geodesic flow. The disk is another integrable model. Unlike the sphere, however, the eigenspaces are uniformly bounded in dimension. For the Dirichlet problem, this follows from Bourget's hypothesis on the common zeros of Bessel functions, proved by Siegel. For the Neumann problem, an analogous statement for the zeros of $J_n'$ implies that Neumann eigenspaces on the disk are also at most two-dimensional \cite{watson1995treatise,ashu2013some}. Thus the disk has no high-multiplicity mechanism to create cancellations inside large eigenspaces. This is one reason why the disk is a natural model for studying lower bounds on $\norm{u_\lambda}_{L^\infty}$.

The $L^\infty$ growth-rate problem for disk eigenfunctions was studied by Lavoie and Poliquin \cite{Lavoie2020GrowthRO}. For every $\varepsilon>0$, they proved a global lower bound of order $\lambda^{1/18-\varepsilon}$ for Dirichlet eigenfunctions and of order $\lambda^{1/12-\varepsilon}$ for Neumann eigenfunctions. They also gave partial information on the corresponding $L^\infty$ exponent limit sets.

Besides the distribution of individual eigenvalues and the size of individual eigenfunctions, one may also study spectral projectors. We also mention the recent work of Chabert and de Verdi\`ere \cite{chabert2026linftynormsspectral}, who study $L^2\to L^\infty$ bounds for spectral projectors on shrinking frequency intervals on several quantum integrable surfaces, including the Euclidean disk.

On manifolds of nonpositive curvature, B\'erard \cite{berard1977wave} obtained a logarithmic improvement of the remainder in the local Weyl law, which implies a logarithmic improvement of the $L^\infty$ eigenfunction bound.

On arithmetic hyperbolic surfaces, the expected bounds for Hecke eigenfunctions are much stronger than the general Sogge estimates. In this direction one often expects, for each fixed finite $p$, bounds of the form
\[
\norm{u_\lambda}_{L^p}\le C_{p,\varepsilon}\lambda^\varepsilon
\]
for every $\varepsilon>0$ in appropriate arithmetic settings. The case $p=4$ is especially important because of its relation to the Watson--Ichino triple product formula and moments of automorphic $L$-functions. Humphries and Khan \cite{HumphriesKhan2025Lp} proved the bound $\norm{g}_{L^4}\ll_{\varepsilon}\lambda_g^{3/304+\varepsilon}$ for Hecke--Maass cusp forms on the modular surface, improving the general Sogge exponent $1/16$. More recently, Ki \cite{Ki2023L4} proved the conjectural $L^4$ bound $\norm{g}_{L^4}\ll_{\varepsilon}\lambda_g^\varepsilon$ for Hecke--Maass cusp forms.

There is also extensive literature on the spectral distribution of the disk. Although the disk has explicit eigenvalues in terms of zeros of Bessel functions, ordering this two-parameter family is subtle. Recent work of Filonov--Levitin--Polterovich--Sher \cite{filonov2023polya} proved P\'olya's conjecture for Euclidean balls in the Dirichlet case and for the unit disk in the Neumann case.

Finally, it is natural to ask whether similar results can be determined for other integrable planar domains, such as ellipses. In that case, the separated eigenfunctions are expressed in terms of Mathieu functions rather than Bessel functions, and the analysis appears substantially more complicated. Even spectral multiplicity questions for ellipses require delicate arguments. Recent work of Hillairet and Judge \cite{hillairet2024generic} proves simplicity for the spectrum of a generic ellipse. At present, an analogue of the explicit disk result for ellipses seems to be open to the best of the author's knowledge.

\subsection*{Notation and conventions}

In this paper, if $X=X_{n,m}$ and $Y=Y_{n,m}$ are nonnegative quantities depending on the indices $n,m$, we write $X\asymp Y$ if there exist constants $c,C>0$, independent of $n,m$, such that $cY\le X\le CY$ for all sufficiently large $n,m$. The notation $X\asymp_m Y$ means that the constants may depend on this fixed value of $m$, but are uniform in $n$.

To treat both boundary conditions uniformly, we use the following notation:
\[
x^D_{n,m}:=j_{n,m},\qquad x^N_{n,m}:=\rho_{n,m}.
\]
We also write
\[
A^D_{n,m}:=|J_{n+1}(j_{n,m})|,\qquad A^N_{n,m}:=|J_n(\rho_{n,m})|\left(1-\frac{n^2}{\rho_{n,m}^2}\right)^{1/2}.
\]
Thus $A^B_{n,m}$ is the factor in \eqref{eq:dir-eigenfunction} or \eqref{eq:neu-eigenfunction}. The computation of the $L^p$ norm leads to the following one-dimensional Bessel integral. For $1\le p<\infty$ and $R>0$, we define
\[
I_p(n,R):=\int_0^R |J_n(x)|^p x\,dx .
\]

\section{Properties of Bessel functions}

\subsection{Basic identities and estimates}

We shall repeatedly use the following standard Bessel identities.

\begin{lemma}
Let $a>0$.
\begin{enumerate}[label=(\roman*)]
\item If $J_n(a)=0$, then
\[
\int_0^1 J_n(ar)^2r\,dr=\frac12J_{n+1}(a)^2.
\]
\item If $J_n'(a)=0$, then
\[
\int_0^1 J_n(ar)^2r\,dr=\frac12\left(1-\frac{n^2}{a^2}\right)J_n(a)^2.
\]
\end{enumerate}
\end{lemma}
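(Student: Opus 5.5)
The plan is to derive both identities from one integral formula, the classical Lommel integral for $\int_0^1 J_n(ar)^2 r\,dr$, which holds for any $a>0$. Set $y(r)=J_n(ar)$. Then $y$ satisfies the Bessel equation in the form
\[
(r y')' + \left(a^2 r - \frac{n^2}{r}\right) y = 0 .
\]
Multiply this equation by $2r y'$ and use
\[
2r y'(r y')' = \frac{d}{dr}(r y')^2,\qquad
2\left(a^2r^2-n^2\right) y y' = \frac{d}{dr}\left[\left(a^2r^2-n^2\right)y^2\right] - 2a^2 r y^2 .
\]
This gives the exact derivative identity
\[
2a^2 r y^2 = \frac{d}{dr}\left[(r y')^2 + \left(a^2r^2-n^2\right)y^2\right].
\]

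Integrate this from $0$ to $1$. At $r=0$ the bracket vanishes: for $n\ge1$ we have $y(r)=O(r^n)$ and $r y'=O(r^n)$, while for $n=0$ we get $r y'\to0$ and the factor $n^2=0$. So we obtain
\[
\int_0^1 J_n(ar)^2 r\,dr = \frac{1}{2a^2}\left[a^2 J_n'(a)^2 + \left(a^2-n^2\right) J_n(a)^2\right]
= \frac12\left[J_n'(a)^2 + \left(1-\frac{n^2}{a^2}\right)J_n(a)^2\right],
\]
using $y'(1)=aJ_n'(a)$.

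For (ii), substitute $J_n'(a)=0$ and the formula reduces at once to $\frac12\left(1-\frac{n^2}{a^2}\right)J_n(a)^2$.

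For (i), substitute $J_n(a)=0$, which leaves $\frac12 J_n'(a)^2$. Then apply the recurrence
\[
J_n'(x) = \frac{n}{x}J_n(x) - J_{n+1}(x)
\]
at $x=a$. Since $J_n(a)=0$, this gives $J_n'(a)=-J_{n+1}(a)$, hence $J_n'(a)^2=J_{n+1}(a)^2$.

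The only point needing care is the boundary term at $r=0$, which is handled by the small-argument behaviour $J_n(x)\sim (x/2)^n/n!$. Everything else is routine algebra.
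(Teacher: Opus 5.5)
Your proof is correct and follows essentially the same route as the paper. The paper quotes the Lommel integrals from DLMF (10.22.37)--(10.22.38) and then uses the recurrence $J_{n+1}(x)=\frac nx J_n(x)-J_n'(x)$ to turn $J_n'(a)^2$ into $J_{n+1}(a)^2$ at a zero of $J_n$; the only difference is that you derive the general identity $\int_0^1 J_n(ar)^2r\,dr=\frac12\bigl[J_n'(a)^2+\bigl(1-\frac{n^2}{a^2}\bigr)J_n(a)^2\bigr]$ directly from the Bessel equation, with a correct treatment of the boundary term at $r=0$, so your argument is self-contained instead of relying on the citation.
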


\begin{proof}
The recurrence formulas for Bessel functions \cite[\S 10.6(i),(10.6.2)]{NIST:DLMF} are
\[
J_{n-1}(x)=J_n'(x)+\frac nx J_n(x),\qquad J_{n+1}(x)=\frac nx J_n(x)-J_n'(x).
\]
If $J_n(a)=0$, $J_{n-1}(a)=J_n'(a)=-J_{n+1}(a)$. The two identities then follow directly from \cite[\S 10.22(ii), (10.22.37)--(10.22.38)]{NIST:DLMF}.
\end{proof}

For $n\ge1$ we define
\[
M_n(R):=\max_{0\le x\le R}|J_n(x)|.
\]

\begin{lemma}\label{lem:landau}
There exist absolute constants $c,C>0$ such that for all $n\ge1$ and all $R>n$,
\[
cn^{-1/3}\le M_n(R)\le Cn^{-1/3}.
\]
\end{lemma}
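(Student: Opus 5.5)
The plan is to prove the two inequalities separately. The lower bound is essentially a matter of locating the first maximum of $J_n$ near the turning point. The upper bound needs a uniform estimate on all of $[0,\infty)$, split into three regimes according to the size of $x$ relative to $n$.

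For the lower bound, since $R>n$ we have $M_n(R)\ge |J_n(n)|$. I would invoke the classical transition-region asymptotic (DLMF 10.19.8, or the Debye/Airy uniform expansion)
\[
J_n(n)=\frac{2^{1/3}}{3^{2/3}\Gamma(2/3)}\,n^{-1/3}\bigl(1+O(n^{-2/3})\bigr).
\]
This gives $J_n(n)\ge c n^{-1/3}$ for all $n\ge n_0$. The finitely many $n<n_0$ are handled directly, using $J_n(n)>0$ (the first zero satisfies $j_{n,1}>n$). This yields an absolute constant $c>0$.

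For the upper bound I would use Landau's uniform bound $|J_\nu(x)|\le b\,\nu^{-1/3}$ for all $x\in\R$ and $\nu>0$. If citing it is acceptable, we are done immediately, since then $M_n(R)\le \sup_x|J_n(x)|\le b n^{-1/3}$. To make the argument self-contained, I would split into three regions.
\begin{enumerate}[label=(\alph*)]
\item For $0\le x\le n-n^{1/3}$, $J_n$ is positive, increasing and exponentially small relative to its value at the turning point. The monotonicity of $x\mapsto J_n(x)$ on $[0,j'_{n,1}]$, with $j'_{n,1}=\rho_{n,1}>n$, reduces this region to the transition estimate.
\item For $|x-n|\le Kn^{1/3}$, the uniform Airy-type asymptotic $J_n(n+tn^{1/3})=2^{1/3}n^{-1/3}\Ai(-2^{1/3}t)+O(n^{-1})$ (uniform for bounded $t$) gives the bound $Cn^{-1/3}$, since $\Ai$ is bounded.
\item For $x\ge n+Kn^{1/3}$, write $x=n\sec\beta$. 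The Debye asymptotics give $|J_n(x)|\lesssim (n\tan\beta)^{-1/2}$, and here $n\tan\beta\gtrsim (x^2-n^2)^{1/2}\gtrsim n^{2/3}$, so again $|J_n(x)|\lesssim n^{-1/3}$.
\end{enumerate}

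The cleanest route to (c), avoiding error-term bookkeeping, is the Sonine-type monotonicity argument. The function $f(x)=J_n(x)^2+\bigl(1-n^2/x^2\bigr)^{-1}J_n'(x)^2$ is decreasing for $x>n$, which follows from Bessel's equation. Evaluating $f$ at $x=n+Kn^{1/3}$ using (b), and noting that there $J_n'=O(n^{-2/3})$ and $1-n^2/x^2\asymp n^{-2/3}$, gives $f\lesssim n^{-2/3}$. Since $J_n^2\le f$ and $f$ is decreasing beyond this point, the same bound holds for $J_n^2$ on all of region (c).

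The main obstacle is region (b): making the transition-region bound genuinely uniform in $n$, including the matching at $x=n+Kn^{1/3}$ where the monotonicity of $f$ takes over. I would rely on Olver's uniform Airy expansion, which is valid uniformly in $x>0$ with explicit error bounds. This same uniformity would also cover region (a), bypassing the separate monotonicity argument there. The small values of $n$ are absorbed into the constants, since $|J_n|\le1$.
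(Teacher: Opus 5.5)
Your proposal is correct and follows the paper's proof. The lower bound comes from $M_n(R)\ge J_n(n)$ together with Cauchy's asymptotic $J_n(n)\sim c_0n^{-1/3}$, and you also spell out the finitely many small $n$ via $J_n(n)>0$, which the paper leaves implicit. The upper bound is simply Landau's inequality $\sup_{x\ge0}|J_n(x)|\le Cn^{-1/3}$. Your optional self-contained route to the upper bound is also sound, though the paper does not need it: it uses monotonicity of $J_n$ on $[0,\rho_{n,1}]\supset[0,n]$, the Airy-type asymptotic in the transition zone, and the nonincreasing Sonine function $J_n^2+(1-n^2/x^2)^{-1}(J_n')^2$ on $(n,\infty)$.
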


\begin{proof}
The lower bound follows from the Cauchy asymptotic formula, see Watson \cite[Ch. VIII]{watson1995treatise},
\[
J_n(n)=c_0n^{-1/3}+O(n^{-1}),\qquad c_0>0,
\]
since $n\in[0,R]$. The upper bound is Landau's inequality, see Landau \cite{Landau2000BesselFM},
\[
\sup_{x\ge0}|J_n(x)|\le Cn^{-1/3}.
\]
\end{proof}

We introduce the following elementary integration by parts estimate, which we will use repeatedly.

\begin{lemma}\label{lem:elementary-ibp}
Let $I$ be a compact interval and let $\psi\in C^2(I)$ be real-valued. Assume that $\psi'$ does not vanish on $I$.
\begin{enumerate}[label=(\roman*)]
\item If $\psi'$ is monotone and $|\psi'|\ge M>0$ on $I$, then for every $\lambda>0$,
\[
\left|\int_I e^{i\lambda\psi(t)}\,dt\right|\le \frac{C}{\lambda M}.
\]
\item More generally, if $q\in C^1(I)$ is real valued and $q/\psi'$ is monotone on $I$ with $|q/\psi'|\le G$, then for every $\lambda>0$,
\[
\left|\int_I q(t)e^{i\lambda\psi(t)}\,dt\right|\le \frac{C G}{\lambda}.
\]
\end{enumerate}
\end{lemma}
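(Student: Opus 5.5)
This is the classical van der Corput-type integration by parts; the constant $C$ is absolute (one can take $C=2$ in (i) and $C=3$ in (ii)). I would prove (ii) first and deduce (i) from it. Write $I=[\alpha,\beta]$. Since $\psi'$ is continuous and nonvanishing, it has constant sign, and
\[
e^{i\lambda\psi}=\frac{1}{i\lambda\psi'}\,\frac{d}{dt}e^{i\lambda\psi}.
\]
Hence, with $g:=q/\psi'$,
\[
\int_I q\,e^{i\lambda\psi}\,dt=\frac1{i\lambda}\int_I g(t)\,\frac{d}{dt}e^{i\lambda\psi(t)}\,dt .
\]
Since $q\in C^1$ and $\psi\in C^2$ with $\psi'\neq0$, $g$ is $C^1$, so I can integrate by parts once:
\[
\int_I q\,e^{i\lambda\psi}\,dt=\frac1{i\lambda}\Bigl[g\,e^{i\lambda\psi}\Bigr]_\alpha^\beta-\frac1{i\lambda}\int_I g'(t)\,e^{i\lambda\psi(t)}\,dt .
\]

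The boundary term is bounded by $2G/\lambda$. For the remaining integral, the modulus of the exponential is $1$, so it is at most $\lambda^{-1}\int_I|g'|\,dt$. Monotonicity of $g$ means $g'$ has constant sign on $I$. Therefore
\[
\int_I|g'|\,dt=|g(\beta)-g(\alpha)|\le 2G.
\]
This gives the total bound $4G/\lambda$. Alternatively, if $q$ and $\psi'$ are real-valued one may write $|g(\beta)-g(\alpha)|$ directly and combine it with the endpoint terms to get $3G/\lambda$. In either case this proves (ii) with an absolute constant.

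For (i), take $q\equiv1$. Then $g=1/\psi'$. Because $\psi'$ is monotone and of one sign, $1/\psi'$ is monotone, and $|1/\psi'|\le 1/M$. Applying (ii) with $G=1/M$ gives $C/(\lambda M)$.

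The only point requiring care is the regularity and monotonicity of $g$. This is exactly where the hypothesis that $q/\psi'$ is monotone is used: it lets the total variation of $g$ be controlled by $2G$, rather than requiring a bound on $g'$. No step is a real obstacle; the argument is routine.
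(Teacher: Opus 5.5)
Your argument is the paper's proof: integrate by parts against $g=q/\psi'$, bound the boundary term by $2G/\lambda$ and the remaining integral by the total variation $|g(\beta)-g(\alpha)|\le 2G$ to get $4G/\lambda$, then take $q\equiv1$, $G=1/M$ for (i). One small correction to your aside: the endpoint and variation terms combine to $2G$ only when $g$ has constant sign (as in (i), which gives $C=2$ there). If $q/\psi'$ changes sign, e.g.\ $g(\alpha)=-G$, $g(\beta)=G$, the same estimates give $4G/\lambda$ rather than $3G/\lambda$; this is harmless, since the lemma only needs an absolute constant.
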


\begin{proof}
Write $I=[\alpha,\beta]$. For (ii), since $q/\psi'$ is monotone and $|q/\psi'|\le G$, integration by parts gives
\[
\int_\alpha^\beta q(t)e^{i\lambda\psi(t)}\,dt
=\left[\frac{q(t)e^{i\lambda\psi(t)}}{i\lambda\psi'(t)}\right]_\alpha^\beta
-\frac1{i\lambda}\int_\alpha^\beta e^{i\lambda\psi(t)}\frac{d}{dt}\left(\frac{q(t)}{\psi'(t)}\right)\,dt.
\]
Hence
\[
\left|\int_\alpha^\beta q(t)e^{i\lambda\psi(t)}\,dt\right|
\le \frac{2G}{\lambda}+\frac1\lambda\int_\alpha^\beta \left|\frac{d}{dt}\left(\frac{q(t)}{\psi'(t)}\right)\right|\,dt
=\frac{2G}{\lambda}+\frac1\lambda \left|\frac{q(\beta)}{\psi'(\beta)}-\frac{q(\alpha)}{\psi'(\alpha)}\right|
\le \frac{4G}{\lambda}.
\]
Part (i) is the special case $q\equiv1$ and $G=M^{-1}$, because $1/\psi'$ is monotone when $\psi'$ is monotone and nonvanishing.
\end{proof}

\begin{lemma}\label{lem:pointwise-bessel}
For all integers $n\ge0$ and all $x>n$,
\[
|J_n(x)|\le C(x^2-n^2)^{-1/4}.
\]
\end{lemma}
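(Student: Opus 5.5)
We need to prove $|J_n(x)|\le C(x^2-n^2)^{-1/4}$ for all $x>n\ge 0$, with an absolute constant. The plan is to use the standard Sonine–Pólya monotonicity argument for the envelope of solutions of Bessel's equation, and to fall back on Lemma \ref{lem:landau} near the transition point $x\approx n$. This is a classical inequality (it appears in Watson and in the literature on Bessel function bounds), but I would prove it directly as follows.

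\emph{Sonine-type envelope.} For $n\ge 1$, set $y(x)=J_n(x)$, which satisfies $x^2y''+xy'+(x^2-n^2)y=0$. Define
\[
F(x):=(x^2-n^2)\,y(x)^2+x^2y'(x)^2 .
\]
Using the equation to eliminate $x^2y''$, one computes
\[
F'(x)=2x\,y^2+2(x^2-n^2)yy'+2x\,y'^2+2x^2y'y''=2x\,y^2,
\]
since $2x^2y'y''=-2x\,y'^2-2(x^2-n^2)yy'$. This alone is not monotone in the useful direction, so I would instead work with the Liouville normal form. Put $w(x)=\sqrt{x}\,J_n(x)$, which solves $w''+Q(x)w=0$ with
\[
Q(x)=1-\frac{n^2-\tfrac14}{x^2}.
\]
Then consider $G(x):=w^2+w'^2/Q$ on the region where $Q>0$. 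A direct computation gives $G'=-Q'w'^2/Q^2$. For $n\ge1$ we have $Q'>0$, so $G$ is nonincreasing. Consequently, for $x\ge x_0$ (any point with $Q(x_0)>0$),
\[
x\,J_n(x)^2\le G(x)\le G(x_0).
\]

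\emph{Bounding the initial value and the comparison.} I would split into two regimes.

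\textbf{Regime $x^2-n^2\le Kn^{4/3}$.} Here $(x^2-n^2)^{-1/4}\ge K^{-1/4}n^{-1/3}$, so Landau's inequality in Lemma \ref{lem:landau} already gives $|J_n(x)|\le Cn^{-1/3}\le C'(x^2-n^2)^{-1/4}$.

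\textbf{Regime $x^2-n^2\ge Kn^{4/3}$.} Choose $x_0$ with $x_0^2-n^2=Kn^{4/3}/2$ (or some comparable threshold). The Sonine argument then only gives $xJ_n(x)^2\le G(x_0)$, which is independent of $x$. That yields decay like $x^{-1/2}$ rather than $(x^2-n^2)^{-1/4}$, and the two differ when $x\sim n$. The better route is to use the monotone quantity adapted to the WKB amplitude. Since $Q^{1/2}$ is increasing, consider
\[
H(x):=Q^{1/2}w^2+Q^{-1/2}w'^2 .
\]
Its derivative is
\[
H'=\tfrac12 Q'Q^{-1/2}w^2-\tfrac12 Q'Q^{-3/2}w'^2+2Q^{1/2}ww'+2Q^{-1/2}w'w'',
\]
and the cross terms cancel using $w''=-Qw$, leaving $H'=\tfrac12Q'Q^{-3/2}\,(Qw^2-w'^2)$. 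This quantity has no fixed sign, so $H$ is not monotone. The remedy I would try is the classical trick that such an $H$ is of bounded variation in the ratio sense: $|H'|\le \tfrac12 (Q'/Q)H$, hence
\[
H(x)\le H(x_0)\,\bigl(Q(x)/Q(x_0)\bigr)^{1/2}.
\]
This gives $Q^{1/2}w^2\le H(x_0)Q(x)^{1/2}Q(x_0)^{-1/2}$, i.e. $xJ_n^2\le H(x_0)Q(x_0)^{-1/2}$, the same loss as before.

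So the cleanest plan is this: with $K$ large, a Gronwall-type argument (or Olver's uniform Airy/Debye expansion) gives $H(x)\asymp H(x_0)$ for $x\ge x_0$, with $H(x_0)\lesssim Q(x_0)^{1/2}x_0 M_n^2+Q(x_0)^{-1/2}x_0 J_n'(x_0)^2$. Using Lemma \ref{lem:landau} together with the bound $|J_n'|\lesssim n^{-2/3}$ near the turning point, one gets $H(x_0)\lesssim 1$ at the scale $x_0^2-n^2\sim n^{4/3}$, after checking powers. Then
\[
J_n(x)^2\lesssim x^{-1}Q(x)^{-1/2}=(x^2-n^2+\tfrac14)^{-1/2},
\]
which is the desired bound.

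\emph{The case $n=0$ and the main obstacle.} For $n=0$ the claim reduces to $|J_0(x)|\le Cx^{-1/2}$, which is standard. I expect the main obstacle to be establishing uniform control of $H$ beyond $x_0$, since the naive monotonicity gives the wrong power. If the Gronwall argument fails, I would cite the known uniform bound $|J_n(x)|\le (2/\pi)^{1/2}(x^2-n^2)^{-1/4}$ for $x>n$ (Watson \S13.74, via Nicholson's integral $J_n^2+Y_n^2=\tfrac{8}{\pi^2}\int_0^\infty K_0(2x\sinh t)\cosh 2nt\,dt$). From Nicholson's formula, $J_n^2+Y_n^2$ is decreasing in $x$, and $x\sqrt{1-n^2/x^2}\,(J_n^2+Y_n^2)$ is bounded by $2/\pi$. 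This second route is the one I would actually commit to.
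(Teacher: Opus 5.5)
\textbf{There is a genuine gap in your own argument; the proof rests entirely on a citation.}

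The Landau regime $x^2-n^2\le Kn^{4/3}$ is fine. Beyond it, your argument does not reach the estimate. Take data at $x_0$ with $x_0^2-n^2\sim Kn^{4/3}$. Both the functional $G$ and the ratio bound $H(x)\le H(x_0)(Q(x)/Q(x_0))^{1/2}$ then give only $xJ_n(x)^2\lesssim n^{1/3}$. For $x\ge 2n$ this is off by a factor $n^{1/3}$, as you partly observed.

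The proposed repair is exactly where the whole difficulty lies: ``a Gronwall-type argument gives $H(x)\asymp H(x_0)$.'' Note that $\int_{x_0}^\infty Q'/Q\,dx=\log(1/Q(x_0))\sim\frac23\log n$. So any argument that uses only $|H'|\le\frac12(Q'/Q)H$ reproduces the same loss. One has to exploit the oscillation of the factor $Qw^2-w'^2$ in $H'$. That amounts to a Liouville--Green error bound, from Olver's theory or from stationary-phase asymptotics like Proposition~\ref{prop:osc-asymptotic}, and you do not supply one.

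So everything rests on your fallback citation, and the mechanism you attach to it is not a derivation. Nicholson's formula immediately gives that $J_n^2+Y_n^2$ is decreasing in $x$, but that yields no rate of decay at all. What you actually need is the separate classical inequality $\sqrt{x^2-\nu^2}\,\bigl(J_\nu^2(x)+Y_\nu^2(x)\bigr)\le 2/\pi$ for $x>\nu\ge\frac12$. That inequality is true. Quoting it as a black box does give the lemma, with the sharp constant $\sqrt{2/\pi}$ and with $n=0$ handled by the standard bound $|J_0(x)|\lesssim x^{-1/2}$. But then the lemma is quoted, not proved.

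\textbf{How the paper proves it.} The paper avoids ODE monotonicity and the turning-point split entirely. It applies a van der Corput-type estimate directly to $J_n(x)=\frac1\pi\int_0^\pi\cos(x\sin t-nt)\,dt$:
\begin{itemize}
\item Let $t_0=\arccos(n/x)$ and $\kappa=\sqrt{x^2-n^2}=|\phi''(t_0)|$.
\item The window $|t-t_0|\le\kappa^{-1/2}$ contributes at most its length, $O(\kappa^{-1/2})$.
\item Off the window, $|\phi'(t)|=2x\sin\frac{t+t_0}{2}\,\bigl|\sin\frac{t-t_0}{2}\bigr|\ge c\kappa|t-t_0|\ge c\kappa^{1/2}$, and $\phi'$ is monotone on each piece.
\item A single integration by parts (Lemma~\ref{lem:elementary-ibp}) then gives $O(\kappa^{-1/2})$.
\end{itemize}
This argument is self-contained and uniform in $n\ge0$ and $x>n$. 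It needs neither Landau's inequality nor a separate treatment near $x=n$, and its only cost is a non-explicit constant. Replacing your Gronwall step, or indeed your whole argument, by this integral estimate closes the gap.
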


\begin{proof}
Let $\kappa:=\sqrt{x^2-n^2}$. Bessel's integral representation \cite[\S 10.9(i), (10.9.2)]{NIST:DLMF} gives
\[
J_n(x)=\frac1\pi\int_0^\pi \cos(x\sin t-nt)\,dt.
\]
The phase $\phi(t)=x\sin t-nt$ has a unique stationary point in $(0,\pi)$, namely $t_0=\arccos(n/x)$, and $0<t_0\le\pi/2$. Moreover,
\[
|\phi''(t_0)|=x\sin t_0=\kappa.
\]
Let $\delta=\kappa^{-1/2}$, and $I_0=[t_0-\delta,t_0+\delta]\cap[0,\pi]$. The contribution of $I_0$ is bounded by its length, hence is $O(\kappa^{-1/2})$.

It remains to estimate the contribution of $[0,\pi]\setminus I_0$. On this set we have $|t-t_0|\ge\delta$. Moreover,
\[
|\phi'(t)|=2x\sin\frac{t+t_0}2\left|\sin\frac{t-t_0}2\right|\ge \frac4\pi x\sin\frac{t_0}2\left|\frac{t-t_0}2\right|\ge c\kappa |t-t_0|.
\]
In particular $|\phi'(t)|\ge c\kappa^{1/2}$ whenever $|t-t_0|\ge\delta$. The complement $[0,\pi]\setminus I_0$ has at most two connected components. On each of them, $\phi'(t)=x\cos t-n$ is monotone and does not vanish. Applying Lemma \ref{lem:elementary-ibp} with $\lambda=1$ and $M=c\kappa^{1/2}$ gives a contribution $O(\kappa^{-1/2})$ from each component. Therefore $|J_n(x)|\le C\kappa^{-1/2}=C(x^2-n^2)^{-1/4}$.
\end{proof}

\subsection{Uniform asymptotic estimates}
\label{subsec:uniform-estimate}

To analyze the asymptotics of $J_n$, we introduce the following notations. For $x>n$ define $\beta\in(0,\pi/2)$ by $x=\frac n{\cos\beta}$. Set
\[
\Theta_n(x):=n(\tan\beta-\beta)=\sqrt{x^2-n^2}-n\arccos\frac n x,\qquad
\Lambda_n(x):=n\frac{\sin^3\beta}{\cos\beta}.
\]
The following proposition is the central asymptotic formula. It is proved by a stationary phase argument from Bessel's integral representation.

\begin{proposition}\label{prop:osc-asymptotic}
There exist absolute constants $K_0,C>0$ such that, whenever $n\ge1$ and $x>n$ satisfy
\[
x-n\ge K_0 n^{1/3},
\]
the following estimates hold:
\begin{equation}\label{eq:bessel-asymptotic}
J_n(x)=\sqrt{\frac2\pi}(x^2-n^2)^{-1/4}\left(\cos\left(\Theta_n(x)-\frac\pi4\right)+O(\Lambda_n(x)^{-1/10})\right),
\end{equation}
\begin{equation}\label{eq:bessel-derivative-asymptotic}
J_n'(x)=-\sqrt{\frac2\pi}\frac{(x^2-n^2)^{1/4}}{x}\left(\sin\left(\Theta_n(x)-\frac\pi4\right)+O(\Lambda_n(x)^{-1/10})\right).
\end{equation}
The constants in the two $O$-terms are absolute. Moreover, for every $\varepsilon>0$ there exists $K_\varepsilon\ge K_0$ such that both error terms have absolute value at most $\varepsilon$ whenever $x-n\ge K_\varepsilon n^{1/3}$. In particular, the error terms tend to zero uniformly for every family satisfying $(x-n)/n^{1/3}\to\infty$.
\end{proposition}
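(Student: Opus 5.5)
We will prove Proposition \ref{prop:osc-asymptotic} by refining the stationary phase argument of Lemma \ref{lem:pointwise-bessel}. The starting point is again Bessel's integral
\[
J_n(x)=\frac1\pi\operatorname{Re}\int_0^\pi e^{i\phi(t)}\,dt,\qquad \phi(t)=x\sin t-nt.
\]
The unique stationary point is $t_0=\beta$, since $\cos\beta=n/x$. At this point
\[
\phi(\beta)=x\sin\beta-n\beta=n(\tan\beta-\beta)=\Theta_n(x),\qquad \phi''(\beta)=-x\sin\beta=-\kappa,
\]
with $\kappa=\sqrt{x^2-n^2}$. The third derivative is $\phi'''(t)=-x\cos t$, and $|\phi'''(\beta)|=n$. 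The natural small parameter is the ratio of the cubic Taylor term to the quadratic one at the scale $|t-\beta|\sim\kappa^{-1/2}$, namely $n\kappa^{-3/2}$. The key observation is that $\Lambda_n(x)=n\sin^3\beta/\cos\beta=\kappa^3/(n x^2)\cdot(x/n)\cdots$, which gives $\Lambda_n=\kappa^3/(x n)\cdot(1/\cos^{2}\beta)\cdot\cos^2\beta$. Simplifying, $\Lambda_n=\kappa^3/(n^2)\cdot\cos\beta\cdot(\ldots)$. Rather than fuss over this, we will check directly that $\Lambda_n\gtrsim\min(\kappa^3/n^2,\kappa)$, so that $\Lambda_n$ large is equivalent to $\kappa^{3/2}\gg n$ together with $\kappa\gg1$. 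The hypothesis $x-n\ge K_0n^{1/3}$ gives $\kappa^2\ge 2nK_0n^{1/3}$, so $\kappa^{3/2}\gtrsim K_0^{3/4}n$, and hence $\Lambda_n\gtrsim K_0^{3/2}$ is large.

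The argument then proceeds in three steps.

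\textbf{Step 1 (splitting).} Split $[0,\pi]$ into the window $I_0=\{|t-\beta|\le\delta\}$ and its complement. The window width is $\delta=\kappa^{-1/2}\Lambda_n^{\sigma}$ for a small $\sigma>0$, chosen so that $\delta\le\beta/2$ and also $\delta$ is below $\pi-\beta$.

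\textbf{Step 2 (window).} On $I_0$, write
\[
\phi(t)=\Theta_n-\tfrac{\kappa}{2}(t-\beta)^2+R(t),\qquad |R|\lesssim x\,|t-\beta|^3.
\]
Near $\beta$, $x\cos t\approx n$ when $\beta$ is small, and $x|t-\beta|\le\kappa$ when $\beta$ is not small; tracking these two regimes yields the bound $|R|\lesssim \kappa^{-3/2}\Lambda_n^{3\sigma}\cdot(\text{the effective }x\cos\text{ or }x\sin\text{ size})$. The target is $|R|\lesssim\Lambda_n^{3\sigma-1/2}$ on $I_0$. Replacing $e^{iR}$ by $1$ then costs $\delta\cdot\Lambda_n^{3\sigma-1/2}$. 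The Gaussian (Fresnel) integral over $|s|\le\delta$ equals $\sqrt{2\pi/\kappa}\,e^{-i\pi/4}$ plus a tail of size $1/(\kappa\delta)=\kappa^{-1/2}\Lambda_n^{-\sigma}$, by Lemma \ref{lem:elementary-ibp}(i).

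\textbf{Step 3 (complement).} On the complement, use $|\phi'(t)|\ge c\kappa|t-\beta|$ from the proof of Lemma \ref{lem:pointwise-bessel}, together with monotonicity of $\phi'$. Lemma \ref{lem:elementary-ibp}(i) then bounds each of the at most two pieces by $1/(\kappa\delta)=\kappa^{-1/2}\Lambda_n^{-\sigma}$. The endpoint $t=\pi$ is harmless: the lemma applies directly to the whole component, and the factor $\beta\le\pi/2$ keeps $\sin\frac{t+\beta}{2}$ bounded below there.

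Collecting the three contributions gives $\kappa^{-1/2}$ times an error of size $\Lambda_n^{3\sigma-1/2}\Lambda_n^{\sigma}+\Lambda_n^{-\sigma}$. Choosing $\sigma=1/10$ balances these to $\Lambda_n^{-1/10}$, which matches the stated exponent. Taking real parts gives $\cos(\Theta_n-\pi/4)$ with prefactor $\sqrt{2/\pi}\,\kappa^{-1/2}$, which is \eqref{eq:bessel-asymptotic}. For \eqref{eq:bessel-derivative-asymptotic}, differentiate under the integral:
\[
J_n'(x)=-\frac1\pi\int_0^\pi\sin t\,\sin\phi\,dt.
\]
The same decomposition applies with amplitude $q=\sin t$. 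At the stationary point $\sin\beta=\kappa/x$, which produces the prefactor $\kappa^{1/2}/x$ and the phase $\sin(\Theta_n-\pi/4)$. On the window, the amplitude variation $|\sin t-\sin\beta|\le\delta$ must be shown to be $O(\Lambda_n^{-1/10})$ relative to $\sin\beta$, which requires $\delta\lesssim\Lambda_n^{-1/10}\sin\beta$. Off the window we use Lemma \ref{lem:elementary-ibp}(ii), with $q/\phi'$ monotone after possibly splitting at finitely many points.

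The main obstacle is the uniformity of the cubic remainder and of the amplitude comparison across the two regimes $\beta\to0$ (where $x-n\sim n^{1/3}$, the Airy transition) and $\beta$ bounded away from $0$. Our plan is to prove the single inequality
\[
x|t-\beta|^3+|\sin t-\sin\beta|\,\frac{\delta}{\sin\beta}\;\lesssim\;\Lambda_n^{-1/10}\quad\text{on } I_0
\]
by substituting $x=n/\cos\beta$ and $\kappa=n\tan\beta$, and checking the two regimes $\beta\le\pi/4$ and $\beta\ge\pi/4$ separately. If the exponents do not close with $\sigma=1/10$, we will retune $\sigma$; the stated rate $1/10$ leaves slack. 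The final claims about $K_\varepsilon$ follow immediately from monotonicity: $\Lambda_n\gtrsim((x-n)/n^{1/3})^{3/2}$.
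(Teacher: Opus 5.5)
Your plan follows the paper's proof. It uses the same integral $\frac1\pi\Re\int_0^\pi e^{i(x\sin t-nt)}\,dt$ and the same window $\delta=\kappa^{-1/2}\Lambda_n^{1/10}$; with your $\kappa=\sqrt{x^2-n^2}$, the paper writes this as $\delta=\omega/\sqrt{n\tan\beta}$ with $\omega=\Lambda_n^{1/10}$. The three relative errors $\Lambda_n^{4\sigma-1/2}$, $\Lambda_n^{-\sigma}$, $\Lambda_n^{-\sigma}$ are balanced at $\sigma=1/10$ exactly as in the paper, and $J_n'$ is handled the same way, via Lemma \ref{lem:elementary-ibp}(ii) plus linearization of $\sin t$. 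The ``two regimes'' you worry about never need to be separated. Since $\sin\beta=\kappa/x$ and $\cos\beta=n/x$, one has exactly $\Lambda_n=\kappa^3/x^2$, hence $x\delta^3=\Lambda_n^{3\sigma-1/2}$ and $\delta/\sin\beta=\Lambda_n^{\sigma-1/2}$. The first identity gives the cubic remainder bound from $|\phi'''|\le x$ alone. The second gives $\delta\le\beta/2$ once $\Lambda_n$ is large.

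Three of your auxiliary claims are wrong as written, though each is repairable.

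First, $\Lambda_n\gtrsim((x-n)/n^{1/3})^{3/2}$ and $\Lambda_n\gtrsim K_0^{3/2}$ both fail when $x-n\gg n$: for $n=1$, $\Lambda_n=(x^2-1)^{3/2}/x^2\sim x$. Use the paper's bound $\Lambda_n\ge x(1-n/x)^{3/2}\ge\frac{\sqrt2}{2}(x-n)n^{-1/3}$ instead. It still yields the $K_\varepsilon$ statement.

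Second, in the window for $J_n'$ the pointwise condition $\delta\lesssim\Lambda_n^{-1/10}\sin\beta$ is not sufficient. The window exceeds the stationary scale $\kappa^{-1/2}$ by the factor $\Lambda_n^{1/10}$, so the amplitude-variation integral is only bounded by $\delta^2$. What you need is $\delta^2\lesssim\Lambda_n^{-1/10}\kappa^{-1/2}\sin\beta$. This holds, since $\delta^2\kappa^{1/2}/\sin\beta=\Lambda_n^{2\sigma-1/2}=\Lambda_n^{-3/10}$. Correspondingly, the second term of your planned inequality should carry $\kappa^{1/2}\delta/\sin\beta$ rather than $\delta/\sin\beta$.

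Third, off the window for $J_n'$ you cannot bound $|\sin t/\phi'(t)|$ by $1/(c\kappa\delta)$ using $|\sin t|\le 1$. That gives a relative error $\Lambda_n^{-1/10}/\sin\beta$ against the main term $\kappa^{1/2}/x$. This is unbounded near the turning point; for example, $\sin\beta=\Lambda_n^{-1/5}$ is admissible. You need $|\sin t/\phi'(t)|\lesssim 1/(x\delta)$, which holds for three reasons:
\begin{enumerate}
\item The function $\sin t/(x(\cos t-\cos\beta))$ has derivative $(1-\cos\beta\cos t)/(x(\cos t-\cos\beta)^2)>0$ on each component, so no splitting is needed.
\item It vanishes at $t=0$ and $t=\pi$.
\item It is $O(1/(x\delta))$ at $t=\beta\pm\delta$.
\end{enumerate}
This is the paper's bound $|\sin t/\phi_\beta'(t)|\le4\cos\beta/\delta$ after rescaling by $n$. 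With these fixes your argument coincides with the paper's.
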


\begin{proof}
Bessel's integral representation gives
\[
J_n(x)=\frac1{2\pi}\int_{-\pi}^{\pi}e^{i(x\sin t-nt)}\,dt
=\Re\left(\frac1{\pi}\int_{0}^{\pi}e^{i(x\sin t-nt)}\,dt\right).
\]
Writing $x=n\sec\beta$, define
\[
\phi_\beta(t):=\sec\beta\sin t-t.
\]
The phase $\phi_\beta$ has stationary point $t=\beta$, and
\[
\phi_\beta(\beta)=\tan\beta-\beta,\qquad\phi_\beta''(\beta)=-\tan\beta.
\]

Let
\[
\kappa:=\tan\beta,\qquad \omega:=\Lambda_n(x)^{1/10},\qquad \delta:=\frac{\omega}{\sqrt{n\kappa}}.
\]

When $\frac{x-n}{n^{1/3}}\ge1$, 
\[
1-\frac nx\ge\frac{n^{1/3}}{n+n^{1/3}}\ge\frac12n^{-2/3},
\]
therefore
\[
\Lambda_n(x)=x\cdot\left(1-\frac{n^2}{x^2}\right)^{\frac32}\ge x\cdot\left(1-\frac{n}{x}\right)^{\frac32}=\sqrt{(1-\frac nx)(x-n)^2}\ge \frac{\sqrt{2}}2\frac{x-n}{n^{1/3}}.
\]

Thus, after increasing $K_0$ if necessary, $\Lambda_n(x)$ is sufficiently large throughout the range of the proposition. Moreover
\[
\frac{\delta}{\sin\beta}=\frac{\omega}{\sqrt{n\sin^3\beta/\cos\beta}}=\Lambda_n(x)^{-2/5}.
\]
By taking $K_0$ large enough we may therefore assume $\delta<\beta/2$, and the neighborhood $[\beta-\delta,\beta+\delta]$ is contained in $(0,\pi)$.

On each component of the complement of this neighborhood, $\phi_\beta'$ is monotone and does not vanish. Moreover, on $[0,\beta-\delta]$,
\[
|\phi_\beta'(t)|\ge \frac{\cos(\beta-\delta)-\cos\beta}{\cos\beta}\ge \frac{\delta\sin(\beta-\delta)}{\cos\beta}\ge \frac{1}{2}\delta\tan\beta=\frac12\kappa\delta,
\]
and the same lower bound holds on $[\beta+\delta,\pi]$. Lemma \ref{lem:elementary-ibp}, applied with $\lambda=n$ and $M=c\kappa\delta$, gives the contribution away from $\beta$ as
\[
O\left(\frac1{n\kappa\delta}\right)=O\left((n\kappa)^{-1/2}\Lambda_n(x)^{-1/10}\right).
\]

Near $\beta$, Taylor expansion gives
\[
\phi_\beta(\beta+s)=\phi_\beta(\beta)-\frac\kappa2s^2+O(\sec\beta |s|^3).
\]
Using
\[
n(\sec\beta)|s|^3\le n(\sec\beta)\delta^3=O(\Lambda_n(x)^{-1/5})\qquad (|s|\le\delta)
\]
and $|e^{ix}-1|\le |x|$, the error made in replacing $e^{in\phi_\beta(\beta+s)}$ by
\[
e^{in\phi_\beta(\beta)}e^{-\frac{i}{2}n\kappa s^2}=e^{i\Theta_n(x)}e^{-\frac{i}{2}n\kappa s^2}
\]
in the integral over $[\beta-\delta,\beta+\delta]$ is bounded by
\[
O\left(\delta\Lambda_n(x)^{-1/5}\right)=O\left((n\kappa)^{-1/2}\Lambda_n(x)^{-1/10}\right).
\]
Consequently,
\[
\int_{\beta-\delta}^{\beta+\delta}e^{in\phi_\beta(t)}\,dt
=e^{i\Theta_n(x)}\int_{-\delta}^\delta e^{-\frac{1}{2}in\kappa s^2}\,ds+O\left((n\kappa)^{-1/2}\Lambda_n(x)^{-1/10}\right).
\]
Using
\[
\int_{-\infty}^\infty e^{-\frac{i}{2}s^2}\,ds=\sqrt{2\pi}e^{-i\pi/4}\qquad\text{and}\qquad \int_{\omega}^\infty e^{-\frac{i}{2}s^2}\,ds=O\left(\frac1\omega\right),
\]
the change of variables $s\mapsto s/\sqrt{n\kappa}$ gives
\[
\int_{\beta-\delta}^{\beta+\delta}e^{in\phi_\beta(t)}\,dt
=e^{i\Theta_n(x)-i\pi/4}\sqrt{\frac{2\pi}{n\kappa}}+O\left((n\kappa)^{-1/2}\Lambda_n(x)^{-1/10}\right).
\]
Combining the estimates and taking real parts yields
\[
J_n(x)=\sqrt{\frac{2}{\pi n\kappa}}\left(\cos\left(\Theta_n(x)-\frac\pi4\right)+O(\Lambda_n(x)^{-1/10})\right).
\]
Since $x^2-n^2=n^2\tan^2\beta=n^2\kappa^2$, this is \eqref{eq:bessel-asymptotic}.

Differentiating Bessel's integral representation gives
\[
J_n'(x)=\frac{i}{2\pi}\int_{-\pi}^{\pi}(\sin t)\,e^{i(x\sin t-nt)}\,dt
=\Re\left(\frac{i}{\pi}\int_{0}^{\pi}(\sin t)\,e^{i(x\sin t-nt)}\,dt\right).
\]
For the derivative asymptotic we use the weighted part of Lemma \ref{lem:elementary-ibp}. On the two complementary intervals, the quotient
\[
\frac{\sin t}{\phi_\beta'(t)}=\frac{\sin t\cos\beta}{\cos t-\cos\beta}
\]
is monotone on each of the two intervals. Indeed,
\[
\frac{d}{dt}\left(\frac{\sin t\cos\beta}{\cos t-\cos\beta}\right)=\frac{\cos\beta(1-\cos\beta\cos t)}{(\cos t-\cos\beta)^2}>0, \qquad t\ne\beta.
\]
For the relevant endpoints $t=\beta\pm\delta,0,\pi$ one has
\[
\left|\frac{\sin t}{\phi_\beta'(t)}\right|\le\frac{4\cos\beta}{\delta}.
\]
By monotonicity, the same bound holds for all $t$ in both intervals. Thus Lemma \ref{lem:elementary-ibp}, with $\lambda=n$ and $G=C\cos\beta/\delta$, gives the contribution away from $\beta$ as
\[
O\left(\frac{\cos\beta}{n\delta}\right)=O\left(\frac{\sin\beta}{\sqrt{n\kappa}}\Lambda_n(x)^{-1/10}\right).
\]

Near $\beta$, we use $\sin t=\sin\beta+O(t-\beta)$ and the preceding stationary phase estimate:
\[
\begin{aligned}
\int_{\beta-\delta}^{\beta+\delta}(\sin t)\,e^{in\phi_\beta(t)}\,dt
&=\sin\beta\int_{\beta-\delta}^{\beta+\delta}e^{in\phi_\beta(t)}\,dt+\int_{\beta-\delta}^{\beta+\delta}O(t-\beta)\,dt\\
&=\sin\beta\sqrt{\frac{2\pi}{n\kappa}}e^{i\Theta_n(x)-i\pi/4}+O\left(\frac{\sin\beta}{\sqrt{n\kappa}}\Lambda_n(x)^{-1/10}\right)+O(\delta^2).
\end{aligned}
\]
Since
\[
\delta^2=\frac{\sin\beta}{\sqrt{n\kappa}}\Lambda_n(x)^{-3/10},
\]
we finally get
\[
\int_0^\pi(\sin t)\,e^{in\phi_\beta(t)}\,dt
=\sin\beta\sqrt{\frac{2\pi}{n\kappa}}e^{i\Theta_n(x)-i\pi/4}+O\left(\frac{\sin\beta}{\sqrt{n\kappa}}\Lambda_n(x)^{-1/10}\right).
\]
Together with
\[
\frac{\sin\beta}{\pi}\sqrt{\frac{2\pi}{n\kappa}}=\sqrt{\frac2\pi}\frac{(x^2-n^2)^{1/4}}{x},
\]
this gives \eqref{eq:bessel-derivative-asymptotic}.
\end{proof}

The oscillatory estimates above do not cover the endpoint $\gamma=0$ when $m$ does not tend to $\infty$. We need another lemma to deal with this case.

\begin{lemma}\label{lem:fixed-turning}
Let $m_0\in\mathbb{N}$ be fixed. We denote by $a_m$ the absolute value of the $m$-th negative zero of the Airy function $\Ai$, and by $a_m'$ the absolute value of the $m$-th negative zero of $\Ai'$. Then
\[
j_{n,m_0}=n+2^{-1/3}a_{m_0}n^{1/3}+o(n^{1/3}),
\]
\[
\rho_{n,m_0}=n+2^{-1/3}a'_{m_0}n^{1/3}+o(n^{1/3}),
\]
and
\[
J_n'(j_{n,m_0})=-2^{2/3}n^{-2/3}\operatorname{Ai}'(-a_{m_0})+o(n^{-2/3}),
\]
\[
J_n(\rho_{n,m_0})=2^{1/3}n^{-1/3}\operatorname{Ai}(-a'_{m_0})+o(n^{-1/3}).
\]
Consequently, for either boundary condition $B\in\{D,N\}$ and fixed $m$,
\[
A^B_{n,m}\asymp_m n^{-2/3},\qquad \norm{u^B_{n,m}}_{L^\infty(\D)}\asymp_m n^{1/3}.
\]
\end{lemma}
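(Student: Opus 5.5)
The plan is to take the first four asymptotic formulas from the classical uniform Airy-type (transition region) expansion of $J_n$ near its turning point $x=n$, and then to derive the two consequences from them together with Lemma \ref{lem:landau}. The input is the standard result (Watson, Olver; DLMF \S 10.19(iii), \S 10.21(vi)): uniformly for bounded $t$,
\[
J_n(n+tn^{1/3})=2^{1/3}n^{-1/3}\Ai(-2^{1/3}t)+O(n^{-1}),
\]
\[
J_n'(n+tn^{1/3})=-2^{2/3}n^{-2/3}\Ai'(-2^{1/3}t)+O(n^{-4/3}).
\]
Equivalently, one can quote directly the zero asymptotics $j_{n,m}=n+2^{-1/3}a_m n^{1/3}+O(n^{-1/3})$ and $j'_{n,m}=n+2^{-1/3}a'_m n^{1/3}+O(n^{-1/3})$ for fixed $m$ (DLMF 10.21.40 and its analogue), which is the cleanest route.

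\textbf{Zeros.} If only the first display is used, the argument goes as follows.
\begin{itemize}
\item The zeros of $J_n$ in $[n,n+Tn^{1/3}]$ correspond, via $x=n+tn^{1/3}$, to zeros of $F_n(t)=2^{-1/3}n^{1/3}J_n(n+tn^{1/3})$.
\item $F_n$ converges uniformly to $\Ai(-2^{1/3}t)$ on compact sets.
\item The Airy zeros are simple, so Hurwitz/IVT-type reasoning places exactly one zero of $F_n$ near each $2^{-1/3}a_m$ for $m\le m_0$.
\item $J_n$ has no zeros in $(0,n]$, and $\Ai(-2^{1/3}t)$ is positive for $t\le 0$. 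So the $m_0$-th zero of $J_n$ is the one near $2^{-1/3}a_{m_0}$.
\item Zeros of $J_n$ cannot escape to $t<0$: for $t<-T$ the uniform expansion gives exponential decay with a positive sign.
\end{itemize}
The Neumann zeros $\rho_{n,m}$ are handled in the same way using the derivative expansion and the simple zeros $a'_m$ of $\Ai'$. There is one counting subtlety: $J_n'$ has no zeros in $(0,n)$ for $n\ge1$, so again the $m_0$-th zero of $J_n'$ corresponds to $a'_{m_0}$.

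\textbf{Values at the zeros.} Substitute $t=2^{-1/3}a_{m_0}+o(1)$ into the derivative expansion. Continuity of $\Ai'$ then gives
\[
J_n'(j_{n,m_0})=-2^{2/3}n^{-2/3}\Ai'(-a_{m_0})+o(n^{-2/3}),
\]
and in the same way
\[
J_n(\rho_{n,m_0})=2^{1/3}n^{-1/3}\Ai(-a'_{m_0})+o(n^{-1/3}).
\]

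\textbf{Consequences.}
\begin{itemize}
\item Dirichlet: by the recurrence, $J_{n+1}(j_{n,m})=-J_n'(j_{n,m})$. Since $\Ai'(-a_m)\ne0$, this gives $A^D\asymp_m n^{-2/3}$.
\item Neumann: $\Ai(-a'_m)\ne0$. Also $1-n^2/\rho_{n,m}^2\sim 2(\rho_{n,m}-n)/n\asymp_m n^{-2/3}$, so its square root is $\asymp n^{-1/3}$. Multiplying by $|J_n(\rho_{n,m})|\asymp n^{-1/3}$ gives $A^N\asymp_m n^{-2/3}$.
\item $L^\infty$ norm: $\|u^B_{n,m}\|_{L^\infty}\asymp M_n(x^B_{n,m})/A^B_{n,m}$. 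Since $x^B_{n,m}>n$, Lemma \ref{lem:landau} gives $M_n\asymp n^{-1/3}$, hence $\|u^B_{n,m}\|_{L^\infty}\asymp_m n^{1/3}$.
\end{itemize}

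\textbf{Main obstacle.} The delicate point is justifying the transition-region expansion uniformly and the zero counting, that is, that no extra zeros appear before the scaled window. I would cite Olver's uniform expansion (DLMF 10.20) rather than reprove it; with it everything else is routine.
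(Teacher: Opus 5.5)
Your proposal is correct and takes essentially the paper's route. The paper likewise quotes the fixed-$m$ zero asymptotics from \cite{Lavoie2020GrowthRO} and the values at the zeros from DLMF (10.21.42) and (10.21.44), then obtains $A^B_{n,m}\asymp_m n^{-2/3}$ from $J_{n+1}(j_{n,m})=-J_n'(j_{n,m})$ and $\rho_{n,m}^2-n^2\asymp_m n^{4/3}$, and the $L^\infty$ bound from Lemma \ref{lem:landau}, exactly as you do. Your optional derivation of these inputs from the transition-region expansion is a harmless variant; note that IVT alone only gives existence of a zero near each $2^{-1/3}a_m$ or $2^{-1/3}a'_m$, and uniqueness there comes from the derivative expansion (respectively, the Bessel equation in the Neumann case).
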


\begin{proof}
The two zero asymptotics follow from the proof of Proposition 3.2.1 in \cite{Lavoie2020GrowthRO}, see also \cite{Qu1999BestPU}. The asymptotics for the values at the zeros and derivative zeros follow directly from the large-order asymptotic formulas for the associated values of $J_\nu$, see \cite[\S 10.21(viii), Eqs.~(10.21.42) and (10.21.44)]{NIST:DLMF}, and also \cite{royal1960bessel}.

It remains to justify the last assertion. The asymptotic for $\rho_{n,m}$ gives $\rho_{n,m}^2-n^2\asymp_m n^{4/3}$. Hence in the Neumann case
\[
\left(1-\frac{n^2}{\rho_{n,m}^2}\right)^{1/2}\asymp_m n^{-1/3}.
\]
The asymptotics for $J_n'(j_{n,m})$ and $J_n(\rho_{n,m})$ therefore give $A^B_{n,m}\asymp_m n^{-2/3}$ for both boundary conditions. Moreover Lemma \ref{lem:landau} gives
\[
\max_{0\le x\le j_{n,m}} |J_n(x)|\asymp_m n^{-1/3},\qquad
\max_{0\le x\le \rho_{n,m}} |J_n(x)|\asymp_m n^{-1/3},
\]
since by the preceding zero asymptotics, both zeros are larger than $n$ for all large $n$. Dividing this maximal size by the corresponding normalizing factor in \eqref{eq:dir-eigenfunction} or \eqref{eq:neu-eigenfunction} gives $\norm{u^B_{n,m}}_{L^\infty(\D)}\asymp_m n^{1/3}$.
\end{proof}

\subsection{Estimates for zeros of Bessel functions}

The following results give estimates for the zeros $j_{n,m}$ and $\rho_{n,m}$. The next lemma is due to Lavoie and Poliquin \cite{Lavoie2020GrowthRO}.

\begin{lemma}\label{lem:zeros-near}
There is an absolute integer $M_*$ such that, whenever $n\ge1$ and $m\ge M_*$, the following estimates hold:
\[
n+C_1m^{2/3}n^{1/3}<j_{n,m}<n+C_1m^{2/3}n^{1/3}+C_2m^{4/3}n^{-1/3},
\]
\[
n+C_1(m-1)^{2/3}n^{1/3}<\rho_{n,m}<n+C_1m^{2/3}n^{1/3}+C_2m^{4/3}n^{-1/3},
\]
where $C_1=\frac{(9\pi^2)^{1/3}}2$ and $C_2=\frac{9(3\pi^4)^{1/3}}{40}$.
\end{lemma}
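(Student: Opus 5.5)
The plan is to reduce both statements to two-sided bounds for Bessel zeros in terms of Airy zeros, and then to bound the Airy zeros by elementary functions of $m$. For the Dirichlet zeros I would invoke the Qu--Wong bounds \cite{Qu1999BestPU}. With $a_m$ as in Lemma~\ref{lem:fixed-turning}, they state that for all $\nu>0$
\[
\nu+2^{-1/3}a_m\nu^{1/3}<j_{\nu,m}<\nu+2^{-1/3}a_m\nu^{1/3}+\tfrac{3}{20}2^{-2/3}a_m^2\nu^{-1/3}.
\]
For $\rho_{n,m}$ I would use the analogous Airy-type bounds with $a_m'$ in place of $a_m$. I would also use the interlacing $j_{n,m-1}<\rho_{n,m}<j_{n,m}$, coming from Rolle's theorem and $J_n'(\rho_{n,1})=0$ with $\rho_{n,1}<j_{n,1}$. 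This interlacing explains the index $m-1$ in the Neumann lower bound, since $\rho_{n,m}>j_{n,m-1}$ lets the Dirichlet lower bound at index $m-1$ be imported directly. The upper bound $\rho_{n,m}<j_{n,m}$ likewise reduces the Neumann upper estimate to the Dirichlet one.

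The second step converts Airy zeros into powers of $m$. I would use the classical inequalities
\[
t_m<a_m<t_m\Bigl(1+\tfrac{5}{48}(\tfrac{3\pi}{8}(4m-1))^{-2}\Bigr),\qquad t_m:=\Bigl(\tfrac{3\pi}{8}(4m-1)\Bigr)^{2/3}.
\]
Then I would check the constants. A direct computation gives
\[
2^{-1/3}\bigl(\tfrac{3\pi m}{2}\bigr)^{2/3}=\tfrac{(9\pi^2)^{1/3}}{2}m^{2/3}=C_1m^{2/3}.
\]
Since $a_m\le(\tfrac{3\pi}{2}m)^{2/3}$ for $m\ge M_*$, the first Qu--Wong term is at most $C_1m^{2/3}n^{1/3}$. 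Similarly,
\[
\tfrac{3}{20}2^{-2/3}a_m^2\le\tfrac{9\cdot3^{1/3}\pi^{4/3}}{80}m^{4/3}=\tfrac12C_2m^{4/3}.
\]
The factor $2$ of slack in $C_2$ absorbs the $(1+O(m^{-2}))$ correction in the Airy bound, which is why $M_*$ is needed. Combining these gives both upper bounds.

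The main obstacle is the Dirichlet lower bound $j_{n,m}>n+C_1m^{2/3}n^{1/3}$ with index $m$ rather than $m-1$. Here $2^{-1/3}a_m\approx C_1(m-\tfrac14)^{2/3}$ falls short of $C_1m^{2/3}$ by about $\tfrac{C_1}{6}m^{-1/3}n^{1/3}$, so the crude Qu--Wong lower bound does not suffice. I would first try to close this gap with the positive second-order term. The Airy-type expansion $j_{n,m}=n+2^{-1/3}a_mn^{1/3}+\tfrac{3}{20}2^{-2/3}a_m^2n^{-1/3}+O(m^{5/3}n^{-1})$ has a positive correction of size $m^{4/3}n^{-1/3}$, and this dominates the deficit $m^{-1/3}n^{1/3}$ exactly when $m^{5/3}\gg n^{2/3}$, that is when $m\gtrsim n^{2/5}$. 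In the complementary regime $m\lesssim n^{2/5}$, one needs a lower bound sharper than Qu--Wong, for instance from Olver's uniform expansion with a rigorous error bound, which gives $j_{n,m}\ge n+2^{-1/3}a_mn^{1/3}+c\,a_m^2n^{-1/3}$. For very large $m$ relative to $n$, I would compare instead with McMahon's $j_{n,m}\approx(m+\tfrac n2-\tfrac14)\pi$.

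If these comparisons still leave a gap for some range of $(n,m)$, I would fall back on the source \cite{Lavoie2020GrowthRO}, whose proof of Proposition 3.2.1 derives exactly these constants. The same comparison of $(m-\tfrac14)^{2/3}$ against $(m-1)^{2/3}$ is comfortably favourable, so the Neumann lower bound is not at risk.
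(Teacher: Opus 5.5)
Your own deficit computation already shows that the Dirichlet lower bound $j_{n,m}>n+C_1m^{2/3}n^{1/3}$ cannot be proved: it is false whenever $m$ is small compared with $n^{2/5}$. The term $\tfrac{3}{20}2^{1/3}a_m^2n^{-1/3}$ is not slack in some lower estimate. It is the actual second-order term of $j_{n,m}$. Note that the correct Qu--Wong coefficient \cite{Qu1999BestPU} is $\tfrac{3}{20}2^{1/3}$, not $\tfrac{3}{20}2^{-2/3}$. The version you wrote is false: at $\nu=10$, $m=1$ it gives about $14.238$, whereas $j_{10,1}\approx 14.4755$. So no sharper lower bound, whether from Olver's expansion or anywhere else, can lift $j_{n,m}$ above $n+C_1m^{2/3}n^{1/3}$ in that regime.

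Concretely, fix any $m\ge M_*$. Since $a_m<(\tfrac{3\pi}{2}m)^{2/3}$, that is $2^{-1/3}a_m<C_1m^{2/3}$, the Qu--Wong upper bound gives
\[
j_{n,m}-n-C_1m^{2/3}n^{1/3}<-\bigl(C_1m^{2/3}-2^{-1/3}a_m\bigr)n^{1/3}+\tfrac{3}{20}2^{1/3}a_m^2n^{-1/3}<0
\]
for all sufficiently large $n$. The same conclusion follows from Lemma~\ref{lem:fixed-turning}. Thus the first lower bound fails for every choice of $M_*$. Falling back on \cite{Lavoie2020GrowthRO}, which is all the paper itself does, cannot rescue it. What your method does prove, for all $n,m\ge1$, is
\[
j_{n,m}>n+2^{-1/3}a_mn^{1/3}>n+C_1\bigl(m-\tfrac14\bigr)^{2/3}n^{1/3}.
\]
This weaker form is all that the paper uses later. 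It only needs $x^B_{n,m}-n\asymp n^{1/3}m^{2/3}$ for $M_1\le m\le n$, and $x^B_{n,m}-n\ge cn$ for $m\ge n$.

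Your route does not secure the Neumann lower bound either. Interlacing gives $\rho_{n,m}>j_{n,m-1}$. But when $m\ll n^{2/5}$, the zero $j_{n,m-1}$ itself lies near $n+C_1(m-\tfrac54)^{2/3}n^{1/3}$, which is below $n+C_1(m-1)^{2/3}n^{1/3}$. So ``importing the Dirichlet bound at index $m-1$'' imports exactly the false inequality. The comparison of $(m-\tfrac14)^{2/3}$ with $(m-1)^{2/3}$ that you invoke is not the relevant one. The zero $\rho_{n,m}$ is governed by $a_m'\approx(\tfrac{3\pi}{2}(m-\tfrac34))^{2/3}<a_m$, not by $a_m$. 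To get $(m-1)^{2/3}$ you need an independent Airy-type lower bound for $\rho_{n,m}$ in terms of $a_m'$, uniform in $n$ and $m$. There is room for this, since $(m-\tfrac34)^{2/3}>(m-1)^{2/3}$, but interlacing alone only gives $(m-\tfrac54)^{2/3}$. That weaker bound again suffices for the later applications.

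The upper bounds are fine once the correct constant is used: $\tfrac{3}{20}2^{1/3}a_m^2<\tfrac{3}{20}2^{1/3}(\tfrac{3\pi}{2}m)^{4/3}=C_2m^{4/3}$ exactly. So there is no factor-two slack, but none is needed. The Dirichlet upper bound holds for every $m\ge1$, and $\rho_{n,m}<j_{n,m}$ transfers it to the Neumann case.
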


\begin{proof}
See \cite{Lavoie2020GrowthRO}, Proposition 3.2.1 and Proposition 3.2.2.
\end{proof}

We shall use the following elementary consequences of standard estimates for Bessel zeros.

\begin{lemma}\label{lem:zeros-global}
For every integer $m\ge1$ and $n\ge0$, the following holds:
\[
m\pi-\frac{\pi}4\le j_{n,m}\le m\pi+\frac\pi 2n,
\]
\[
m\pi-2\pi\le\rho_{n,m}\le m\pi+\frac\pi2(n+1).
\]
\end{lemma}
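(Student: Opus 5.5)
We plan to prove the Dirichlet bounds first, using the Watson modulus--phase representation together with monotonicity of $j_{n,m}$ in $n$. The Neumann bounds will then follow from interlacing of the zeros of $J_n$ and $J_n'$. The case $n=0$ is handled separately by the classical enclosure
\[
(m-\tfrac14)\pi<j_{0,m}<(m-\tfrac18)\pi,
\]
which gives both Dirichlet inequalities at once.

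\textbf{Dirichlet lower bound.} We use that $j_{\nu,m}$ is increasing in $\nu$ for fixed $m$, a standard fact from Watson. Hence $j_{n,m}\ge j_{0,m}\ge m\pi-\pi/4$ for all $n\ge 0$.

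\textbf{Dirichlet upper bound.} For $n\ge1$ write
\[
J_n=M_n\cos\vartheta_n,\qquad Y_n=M_n\sin\vartheta_n,
\]
with $\vartheta_n$ continuous and $\vartheta_n(0^+)=-\pi/2$. The Wronskian gives $\vartheta_n'(x)=2/(\pi xM_n(x)^2)$, so $\vartheta_n$ is increasing. The zeros of $J_n$ are exactly the points where $\vartheta_n+\pi/2\in\pi\mathbb Z_{>0}$, so $j_{n,m}$ is the point where $\vartheta_n+\pi/2=m\pi$.

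Next we invoke Watson's result (\S13.74) that for $\nu\ge\frac12$ the function $(x^2-\nu^2)^{1/2}M_\nu(x)^2$ increases to $2/\pi$ on $x>\nu$. This gives $M_n^2\le 2/(\pi\sqrt{x^2-n^2})$, and hence $\vartheta_n'(x)\ge\sqrt{x^2-n^2}/x$ for $x>n$. Since $\vartheta_n$ is increasing, integrating from $n$ to $X$ yields
\[
\vartheta_n(X)+\frac\pi2\ \ge\ \int_n^X\frac{\sqrt{x^2-n^2}}{x}\,dx=\sqrt{X^2-n^2}-n\arccos\frac nX=\Theta_n(X).
\]

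We then show $\Theta_n(X)\ge X-n\pi/2$ for $X\ge n$. The function $g(X)=\Theta_n(X)-X$ satisfies $g'=\sqrt{1-n^2/X^2}-1\le0$, so $g$ is nonincreasing. Moreover $g(X)\to -n\pi/2$ as $X\to\infty$, so $g(X)\ge -n\pi/2$ throughout.

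Taking $X=m\pi+n\pi/2$ (which exceeds $n$) gives $\vartheta_n(X)+\pi/2\ge m\pi$. By monotonicity of $\vartheta_n$ this forces $j_{n,m}\le m\pi+n\pi/2$.

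\textbf{Neumann bounds.} For $n\ge1$, Rolle's theorem and the standard fact $n\le\rho_{n,1}<j_{n,1}$ give the interlacing
\[
j_{n,m-1}<\rho_{n,m}<j_{n,m}\qquad(m\ge 2).
\]
The upper bound then follows from $\rho_{n,m}\le j_{n,m}\le m\pi+n\pi/2$. The lower bound follows from $\rho_{n,m}\ge j_{n,m-1}\ge (m-1)\pi-\pi/4\ge m\pi-2\pi$ for $m\ge2$. For $m=1$ it suffices to note $\rho_{n,1}>0>\pi-2\pi$.

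For $n=0$, the positive zeros of $J_0'$ coincide with those of $J_1$ (we use the convention that excludes the origin). Thus $\rho_{0,m}=j_{1,m}$, and the Dirichlet bounds for $n=1$ give $m\pi-\pi/4\le\rho_{0,m}\le m\pi+\pi/2$. This lies inside the claimed range.

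\textbf{Main obstacle.} The only non-elementary input is the Nicholson-type modulus bound $M_n(x)^2\le 2/(\pi\sqrt{x^2-n^2})$ and the identity $\vartheta_n'=2/(\pi xM_n^2)$. I plan to cite both from Watson (Ch.~XIII and \S15.8 on phase functions) rather than reprove them. Should a self-contained route be preferred, the fallback is a Sturm comparison for $w=\sqrt{x}J_n$, which satisfies $w''+(1-(n^2-\frac14)/x^2)w=0$. A comparison argument on this equation, tracking the Prüfer angle, reproduces the same integral $\int\sqrt{1-n^2/x^2}\,dx$. The rest of the argument, including the elementary inequality $\Theta_n(X)\ge X-n\pi/2$, is routine.
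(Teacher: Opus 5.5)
Your proof is correct, but for the Dirichlet upper bound it takes a genuinely different route from the paper.

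The paper works in the order variable. It cites Watson's formula $\partial_\nu j_{\nu,m}=2j_{\nu,m}\int_0^\infty K_0(2j_{\nu,m}\sinh t)e^{-2\nu t}\,dt$, bounds it by $\int_0^\infty K_0=\pi/2$, and anchors at the explicit zeros $j_{1/2,m}=m\pi$. This one input yields monotonicity in $\nu$ and both Dirichlet bounds, in fact the slightly sharper $m\pi\le j_{n,m}\le m\pi+\frac\pi2(n-\frac12)$ for $n\ge1$. It also handles $n=0$ via $\nu\in[0,\frac12]$, including the estimate $j_{0,m}\ge m\pi-\frac\pi4$ that you import as a separate classical enclosure.

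You instead fix $n$ and work in $x$. The phase equation $\vartheta_n'=2/(\pi xM_n^2)$ (with $M_n$ your modulus, not the paper's $M_n(R)$), combined with the Nicholson-type bound $\sqrt{x^2-n^2}\,M_n^2\le 2/\pi$, gives $\vartheta_n'\ge\sqrt{x^2-n^2}/x$. Hence $\Theta_n(j_{n,m})<m\pi$, and the elementary inequality $\Theta_n(X)\ge X-\frac{n\pi}2$ finishes. These steps are all sound, including $\vartheta_n(n)>-\frac\pi2$ and $m\pi+\frac{n\pi}2>n$.

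Your route costs three citations instead of one: monotonicity of $j_{\nu,m}$ in $\nu$, the enclosure of $j_{0,m}$, and Watson's monotonicity of $(x^2-\nu^2)^{1/2}M_\nu^2$. The last needs $\nu\ge\frac12$, so $n=0$ must be handled separately. In return, the intermediate inequality $\Theta_n(j_{n,m})<m\pi$ is phrased in exactly the phase of Proposition~\ref{prop:osc-asymptotic} and is much more informative than the linear bound. For $m=o(n)$ it already gives $j_{n,m}-n\le(C_1+o(1))m^{2/3}n^{1/3}$, the leading term of the upper bound in Lemma~\ref{lem:zeros-near}.

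One small looseness in the Neumann part: Rolle's theorem alone gives at least one zero of $J_n'$ in each gap $(j_{n,k-1},j_{n,k})$. Identifying $\rho_{n,m}$ as the zero in the $m$-th gap needs exactly one. That is the standard interlacing (DLMF 10.21.2), or it follows from the Bessel equation, which for $x>n$ makes every critical point a strict local maximum of $|J_n|$. The paper likewise just cites the interlacing.
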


\begin{proof}
From \cite[\S 10.21(iv), Eq.~(10.21.17)]{NIST:DLMF} we know
\[
\frac{\partial j_{\nu,m}}{\partial\nu}=2j_{\nu,m}\int_0^\infty K_0(2j_{\nu,m}\sinh t)e^{-2\nu t}\,dt,
\]
where $K_\nu$ is the modified Bessel function of the second kind and $K_0(y)=\int_0^\infty e^{-y\cosh s}\,ds$. Hence
\[
0\le \frac{\partial j_{\nu,m}}{\partial\nu}\le2j_{\nu,m}\int_0^\infty K_0(2j_{\nu,m}\sinh t)\,dt
=2j_{\nu,m}\int_0^\infty \frac{K_0(y)}{\sqrt{4j_{\nu,m}^2+y^2}}\,dy \le\int_0^\infty K_0(y)\,dy=\frac{\pi}2.
\]
Therefore
\[
|j_{\nu,m}-j_{\mu,m}|\le \frac\pi2|\nu-\mu|.
\]
Since $J_{1/2}(x)=\sqrt{2/(\pi x)}\sin x$, we have $j_{1/2,m}=m\pi$. If $n\ge1$, then the monotonicity in $\nu$ gives $j_{n,m}\ge j_{1/2,m}=m\pi$, while the Lipschitz estimate gives
\[
j_{n,m}\le m\pi+\frac\pi2\left(n-\frac12\right)\le m\pi+\frac\pi2 n.
\]
If $n=0$, the upper bound follows from the same monotonicity, $j_{0,m}\le j_{1/2,m}=m\pi$, while $j_{0,m}\ge j_{1/2,m}-\pi/4= m\pi-\pi/4$. This proves the stated bounds for $j_{n,m}$.

For the derivative zeros, if $n\ge1$ the standard interlacing $j_{n,m-1}<\rho_{n,m}<j_{n,m}$ holds with the convention $j_{n,0}=0$. Hence $\rho_{n,m}\le j_{n,m}\le m\pi+\frac\pi2n$. For the lower bound, the case $m=1$ is immediate from positivity, while for $m\ge2$ the estimate for $j_{n,m-1}$ gives
\[
\rho_{n,m}>j_{n,m-1}\ge (m-1)\pi-\frac\pi4\ge m\pi-2\pi.
\]
If $n=0$, then $\rho_{0,m}=j_{1,m}$ because $J_0'=-J_1$, and the same lower bound and the asserted upper bound follow from the estimate for $j_{1,m}$. This proves the lemma.
\end{proof}

\subsection{Fixed-order estimates}

The fixed-order estimates are used when $n$ is bounded and $m$ tends to infinity.

\begin{lemma}\label{lem:fixed-order-estimates}
Fix an integer $N\ge0$. Uniformly for $0\le n\le N$, $B\in\{D,N\}$ and $m\to\infty$, one has
\[
x^B_{n,m}\asymp_N m,\qquad A^B_{n,m}\asymp_N (x^B_{n,m})^{-1/2},
\]
and
\[
\norm{u^B_{n,m}}_{L^\infty(\D)}\asymp_N (x^B_{n,m})^{1/2}.
\]
Moreover, for every $1\le p<\infty$,
\[
I_p(n,x^B_{n,m})\asymp_{p,N}
\begin{cases}
(x^B_{n,m})^{2-p/2},&1\le p<4,\\[4pt]
1+\log x^B_{n,m},&p=4,\\[4pt]
1,&p>4.
\end{cases}
\]
\end{lemma}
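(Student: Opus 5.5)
The proof rests on the fixed-order large-argument asymptotics of $J_n$ together with the zero bounds of Lemma \ref{lem:zeros-global}. Since only the finitely many orders $0\le n\le N$ occur, every constant may depend on $N$; it is therefore enough to argue for each fixed $n$ separately and then take the worst constant.

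\textbf{Zeros and normalizing factors.} First, Lemma \ref{lem:zeros-global} gives $m\pi-2\pi\le x^B_{n,m}\le m\pi+\frac\pi2(N+1)$. This yields $x^B_{n,m}\asymp_N m$ once $m$ is large. Next we use the classical Hankel expansion for fixed $n$:
\[
J_n(x)=\sqrt{\tfrac{2}{\pi x}}\bigl(\cos(x-\tfrac{n\pi}2-\tfrac\pi4)+O_N(x^{-1})\bigr),
\]
together with the companion formula for $J_n'$ with $\sin$. Alternatively, for $n\ge1$ one can apply Proposition \ref{prop:osc-asymptotic}: for fixed $n$ one has $\Lambda_n(x)\asymp x$ and $\Theta_n(x)=x-\frac{n\pi}2+O(1/x)$, so it reproduces the same expansion. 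At a zero $j_{n,m}$ the cosine term is $O(x^{-1})$, so the sine has modulus $1-O(x^{-2})$. Consequently
\[
|J_{n+1}(j_{n,m})|=|J_n'(j_{n,m})|=\sqrt{2/(\pi j_{n,m})}\,(1+o(1)).
\]
Similarly, at $\rho_{n,m}$ we get $|J_n(\rho_{n,m})|\asymp \rho_{n,m}^{-1/2}$, while the factor $(1-n^2/\rho^2)^{1/2}\to1$. Together these give $A^B_{n,m}\asymp_N (x^B_{n,m})^{-1/2}$.

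\textbf{$L^\infty$ norm.} For a fixed order $n$ the function $J_n$ is bounded by $1$, and its maximum over $[0,R]$ is attained near the first maximum, so $M_n(R)\asymp_N 1$ for every $R\ge j_{n,1}$. Dividing by $A^B_{n,m}$ gives $\norm{u^B_{n,m}}_{L^\infty(\D)}\asymp_N (x^B_{n,m})^{1/2}$.

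\textbf{The integrals $I_p$.} Write $R=x^B_{n,m}$ and split the integral at a fixed large point $X_0=X_0(N)$. The piece over $[0,X_0]$ is $\asymp_{p,N}1$: it is bounded because $|J_n|\le1$, and it is bounded below because $J_n$ is not identically zero on $[1,2]$. On $[X_0,R]$ the asymptotic formula gives
\[
|J_n(x)|^p x = (2/\pi)^{p/2}x^{1-p/2}\,|\cos(x-c_n)+O(x^{-1})|^p .
\]
For the upper bound, use $|J_n(x)|\le Cx^{-1/2}$ from Lemma \ref{lem:pointwise-bessel}. Integrating $x^{1-p/2}$ over $[X_0,R]$ gives $R^{2-p/2}$ for $p<4$, $\log R$ for $p=4$, and $O(1)$ for $p>4$. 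For the lower bound when $p\le4$, restrict to the subintervals of each period where $|\cos(x-c_n)|\ge1/2$; once $X_0$ is large enough that the error term is below $1/4$, these subintervals occupy a fixed proportion of every unit of length. This gives $\gtrsim\int_{X_0}^{R-\pi}x^{1-p/2}\,dx$, which has the same order. For $p>4$ the lower bound $\gtrsim1$ comes from the piece over $[0,X_0]$ alone.

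\textbf{Main obstacle.} The main difficulty is making everything uniform in the two boundary conditions, especially the lower bound for $A^N$ and the case $n=0$. For $n=0$ the Neumann zeros are $\rho_{0,m}=j_{1,m}$ and the factor $(1-n^2/\rho^2)$ equals $1$, so this case reduces to Dirichlet-type values of $J_0$ at zeros of $J_1$. I would handle it with the same Hankel expansion, applied now to $J_0$ at the points where $J_0'=-J_1$ vanishes.
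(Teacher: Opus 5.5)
Your proposal is correct and follows the paper's proof essentially step by step. It uses Lemma~\ref{lem:zeros-global} for $x^B_{n,m}\asymp_N m$, and the fixed-order Hankel expansions to bound $|\sin|$ (resp.\ $|\cos|$) from below at the zeros. For the $L^\infty$ bound it relies on the finiteness of $\{0,\dots,N\}$ together with the non-vanishing of $J_n$. For the lower bound on $I_p$ it splits at a fixed point $X_0$ and uses the set $\{|\cos|\ge 1/2\}$.

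The differences are only cosmetic: you use $|J_n|\le 1$ and Lemma~\ref{lem:pointwise-bessel} where the paper uses continuity and the expansion itself. One small correction: the good set occupies a fixed proportion of every period of length $\pi$, not of every unit interval, and this is what your cutoff at $R-\pi$ actually relies on.
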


\begin{proof}
We use the asymptotic expansions for large argument from \cite[\S 10.17 (10.17.3)(10.17.9)]{NIST:DLMF}. Taking the first term in these asymptotic expansions gives the estimates used below, uniformly for $0\le n\le N$ as $x\to\infty$.
\[
J_n(x)=\left(\frac2{\pi x}\right)^{1/2}\left(\cos(x-\frac{n\pi}{2}-\frac\pi4)+O_N(x^{-1})\right),
\]
\[
J_n'(x)=-\left(\frac2{\pi x}\right)^{1/2}\left(\sin(x-\frac{n\pi}{2}-\frac\pi4)+O_N(x^{-1})\right).
\]
Unlike Proposition \ref{prop:osc-asymptotic}, these fixed-order expansions include the case $n=0$, which is needed here.

Lemma \ref{lem:zeros-global} gives $x^B_{n,m}\asymp_N m$ for $0\le n\le N$. We next estimate the normalizing factors. $J_n(j_{n,m})=0$ gives $\cos(j_{n,m}-\frac{n\pi}{2}-\frac\pi4)=O_N(j_{n,m}^{-1})$, and therefore $|\sin(j_{n,m}-\frac{n\pi}{2}-\frac\pi4)|\ge 1/2$ for all large $m$. We get $|J_n'(j_{n,m})|\asymp_N j_{n,m}^{-1/2}$. Since $J_{n+1}(j_{n,m})=-J_n'(j_{n,m})$, this proves the Dirichlet estimate for $A^D_{n,m}$. Similar arguments give $|J_n(\rho_{n,m})|\asymp_N\rho_{n,m}^{-1/2}$. Since $n\le N$ and $\rho_{n,m}\to\infty$, $\left(1-\frac{n^2}{\rho_{n,m}^2}\right)^{1/2}\asymp_N1$, the Neumann estimate for $A^N_{n,m}$ follows.

We now prove the $L^\infty$ estimate. The fixed-order asymptotic above gives a uniform upper bound for $J_n(x)$ when $x$ is large, while continuity gives a uniform upper bound on compact intervals for $0\le n\le N$. For the lower bound, there are finitely many $n$ and no $J_n$ is identically zero. Thus $\max_{0\le x\le x^B_{n,m}} |J_n(x)|\asymp_N1$. Dividing by the factor $A^B_{n,m}$ gives
\[
\norm{u^B_{n,m}}_{L^\infty(\D)}\asymp_N (x^B_{n,m})^{1/2}.
\]

It remains to estimate $I_p$. From the asymptotic above, $|J_n(x)|\le C_N x^{-1/2}$ for $x\ge1$. Since $J_n$ is also bounded on $[0,1]$, this gives
\[
I_p(n,R)\le C_{p,N}
\begin{cases}
R^{2-p/2},&1\le p<4,\\[2pt]
1+\log R,&p=4,\\[2pt]
1,&p>4.
\end{cases}
\]
For the matching lower bounds, choose $X_N$ so large that the error term in the asymptotic for $J_n$ is at most $1/4$ for every $0\le n\le N$ and $x\ge X_N$. On the set where $|\cos(x-\frac{n\pi}{2}-\frac\pi4)|\ge1/2$, we then have $|J_n(x)|\ge c_N x^{-1/2}$. Each interval of length $2\pi$ contains a subinterval of uniformly positive length on which this condition holds. Summing over the complete intervals contained in $[X_N,R]$ gives for all sufficiently large $R$
\[
I_p(n,R)\ge c_{p,N}
\begin{cases}
R^{2-p/2},&1\le p<4,\\[2pt]
1+\log R,&p=4,\\[2pt]
1,&p>4.
\end{cases}
\]
Applying these estimates with $R=x^B_{n,m}$ proves the desired estimate for $I_p(n,x^B_{n,m})$.
\end{proof}

\section{The case \texorpdfstring{$p=\infty$}{p=infinity}}

\subsection{The asymptotic proof of Theorem \ref{thm:profile} for \texorpdfstring{$p=\infty$}{p=infinity}}

\begin{lemma}\label{lem:normalizing-factors}
There exists $M_0\in\mathbb N$ such that
\[
A^B_{n,m}\asymp \frac{\big((x^B_{n,m})^2-n^2\big)^{1/4}}{x^B_{n,m}}
\]
uniformly for $m\ge M_0$, $n\ge0$ and $B\in\{D,N\}$.
\end{lemma}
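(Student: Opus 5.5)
Since $A^D_{n,m}=|J_{n+1}(j_{n,m})|=|J_n'(j_{n,m})|$ and $A^N_{n,m}=|J_n(\rho_{n,m})|\,(x^2-n^2)^{1/2}/x$ with $x=\rho_{n,m}$, the claim amounts to
\[
|J_n'(j_{n,m})|\asymp \frac{(j_{n,m}^2-n^2)^{1/4}}{j_{n,m}},\qquad |J_n(\rho_{n,m})|\asymp (\rho_{n,m}^2-n^2)^{-1/4}.
\]
I will read both off Proposition \ref{prop:osc-asymptotic}.

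\textbf{Large $n$.} Let $n\ge n_0$. By Lemma \ref{lem:zeros-near}, for $m\ge M_*$,
\[
x^B_{n,m}-n\ge C_1(m-1)^{2/3}n^{1/3}.
\]
Fix $\varepsilon=1/4$ and take the corresponding $K_\varepsilon$ from Proposition \ref{prop:osc-asymptotic}. Choose $M_0$ so large that $C_1(M_0-1)^{2/3}\ge K_\varepsilon$. Then both expansions \eqref{eq:bessel-asymptotic} and \eqref{eq:bessel-derivative-asymptotic} hold at $x=x^B_{n,m}$, each with error at most $1/4$.

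In the Dirichlet case, $J_n(j_{n,m})=0$ and \eqref{eq:bessel-asymptotic} force $|\cos(\Theta_n-\pi/4)|\le 1/4$. Hence $|\sin(\Theta_n-\pi/4)|\ge\sqrt{15}/4$, and \eqref{eq:bessel-derivative-asymptotic} gives
\[
|J_n'(j_{n,m})|\asymp \frac{(j_{n,m}^2-n^2)^{1/4}}{j_{n,m}}.
\]
The Neumann case is symmetric: $J_n'(\rho_{n,m})=0$ forces $|\sin|\le 1/4$, so $|\cos|$ is bounded below, and \eqref{eq:bessel-asymptotic} gives the lower bound. The upper bounds come directly from the same expansions, since $|\cos|,|\sin|\le1$ and the errors are at most $1/4$.

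\textbf{Small $n$.} For $0\le n<n_0$, including $n=0$ (which Proposition \ref{prop:osc-asymptotic} excludes), I will use Lemma \ref{lem:fixed-order-estimates} with $N=n_0$. It gives $A^B_{n,m}\asymp_N (x^B_{n,m})^{-1/2}$ for $m$ large. Since $x^B_{n,m}\asymp m\to\infty$ and $n$ is bounded, $(x^2-n^2)^{1/4}/x\asymp x^{-1/2}$, so the two sides agree. Because $N$ is one fixed number, the resulting constants are absolute. I enlarge $M_0$ so that the large-$m$ regime of that lemma applies for all $n<n_0$.

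\textbf{Main obstacle.} The delicate point is uniformity. The cutoff $n_0$ must be chosen first, so that Lemma \ref{lem:zeros-near} and Proposition \ref{prop:osc-asymptotic} apply for $n\ge n_0$ (the proposition already covers all $n\ge1$, so $n_0=1$ likely suffices). Only then is $M_0$ chosen, independently of $n$, through the explicit inequality $C_1(M_0-1)^{2/3}\ge K_\varepsilon$. This works because the lower bound in Lemma \ref{lem:zeros-near} scales exactly like $n^{1/3}$. One must also check that the Neumann lower bound uses $m-1$, which the choice of $M_0$ already handles.
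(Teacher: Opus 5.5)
Your proof is correct and follows essentially the same route as the paper. The paper treats $n=0$ via Lemma \ref{lem:fixed-order-estimates}; for $n\ge1$ it uses the zero estimates to put $x^B_{n,m}$ in the range of Proposition \ref{prop:osc-asymptotic} with small error, and then gets $|\sin|$ or $|\cos|$ bounded below from $J_n=0$ or $J_n'=0$. Your explicit choice $C_1(M_0-1)^{2/3}\ge K_\varepsilon$ via Lemma \ref{lem:zeros-near} simply spells out the step the paper summarizes as ``possible by the zero estimates of Bessel functions already proved.''
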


\begin{proof}
The case $n=0$ follows from Lemma \ref{lem:fixed-order-estimates}, since $A^B_{0,m}\asymp (x^B_{0,m})^{-1/2}$ and the right hand side is exactly $(x^B_{0,m})^{-1/2}$ when $n=0$.

Assume $n\ge1$, choose $M_0$ large enough such that $x^B_{n,m}$ satisfies the condition $x^B_{n,m}-n\ge K_0n^{1/3}$ in Proposition \ref{prop:osc-asymptotic} whenever $m\ge M_0$, and the two error terms there have absolute value at most $1/10$. This is possible by the zero estimates of Bessel functions already proved.

In the Dirichlet case, $J_n(j_{n,m})=0$ and \eqref{eq:bessel-asymptotic} implies $|\cos(\Theta_n(j_{n,m})-\frac\pi4)|\le \frac1{10}$. Hence $|\sin(\Theta_n(j_{n,m})-\frac\pi4)|\ge \frac12$. Using \eqref{eq:bessel-derivative-asymptotic} and the recurrence relation $J_{n+1}(j_{n,m})=-J_n'(j_{n,m})$ we obtain
\[
A^D_{n,m}=|J_{n+1}(j_{n,m})|\asymp \frac{(j_{n,m}^2-n^2)^{1/4}}{j_{n,m}}.
\]

In the Neumann case, $J_n'(\rho_{n,m})=0$ and \eqref{eq:bessel-derivative-asymptotic} implies $|\sin(\Theta_n(\rho_{n,m})-\frac\pi4)|\le \frac1{10}$, and therefore $|\cos(\Theta_n(\rho_{n,m})-\frac\pi4)|\ge \frac12$. Thus \eqref{eq:bessel-asymptotic} gives $|J_n(\rho_{n,m})|\asymp(\rho_{n,m}^2-n^2\big)^{-1/4}$. Multiplying by 
\[
\left(1-\frac{n^2}{\rho_{n,m}^2}\right)^{1/2}=\frac{(\rho_{n,m}^2-n^2)^{1/2}}{\rho_{n,m}}
\]
gives
\[
A^N_{n,m}\asymp \frac{(\rho_{n,m}^2-n^2)^{1/4}}{\rho_{n,m}}.
\]
This proves the lemma.
\end{proof}

\begin{proof}[Proof of Theorem \ref{thm:profile} for $p=\infty$]
It suffices to verify the asserted limit after passing to subsequences. If $n$ is bounded and $m\to\infty$, then after passing to a further subsequence $n$ is fixed, and Lemma \ref{lem:fixed-order-estimates} gives
\[
\norm{u^B_{n,m}}_{L^\infty(\D)}\asymp_n (x^B_{n,m})^{1/2},\qquad \lambda^B_{n,m}=(x^B_{n,m})^2.
\]
Thus the logarithmic exponent is $1/4=\Phi_\infty(\infty)$.

It remains to consider the case $n\to\infty$. In all cases relevant to the logarithmic limits, $m\to\infty$ holds except possibly when $\gamma=0$ and $m$ stays bounded. If $m\to\infty$, Lemma \ref{lem:normalizing-factors} and Lemma \ref{lem:landau} give the common formula
\begin{equation}\label{eq:linfty-common}
\norm{u^B_{n,m}}_{L^\infty(\D)}\asymp n^{-1/3}\frac{x^B_{n,m}}{\big((x^B_{n,m})^2-n^2\big)^{1/4}}.
\end{equation}
This single formula applies to both boundary conditions.

\subsubsection{\textbf{The range $0\le\gamma<1$}}
Any sequence of $(n,m)$ has a subsequence satisfying either $m\to\infty$ or $m$ is eventually constant. So we just need to prove these two cases. 

If $m\to\infty$. Assume $\frac{\log m}{\log n}\to\gamma\in[0,1)$. Then $m=o(n)$. Lemma \ref{lem:zeros-near} gives
\[
x^B_{n,m}=n+O(n^{1/3}m^{2/3})=n(1+o(1)),
\]
and
\[
(x^B_{n,m})^2-n^2\asymp n^{4/3}m^{2/3}.
\]
Substituting into \eqref{eq:linfty-common},
\[
\norm{u^B_{n,m}}_{L^\infty(\D)}\asymp n^{-1/3}\frac{n}{(n^{4/3}m^{2/3})^{1/4}}=n^{1/3}m^{-1/6}.
\]
Since
\[
\lambda^B_{n,m}=(x^B_{n,m})^2=n^2(1+o(1)),
\]
and $m=n^{\gamma+o(1)}$, we obtain
\[
\frac{\log \norm{u^B_{n,m}}_{L^\infty(\D)}}{\log\lambda^B_{n,m}}\to\frac{\frac13-\frac\gamma6}{2}=\frac{2-\gamma}{12}.
\]

If $m$ is fixed, we must have $\gamma=0$. Lemma \ref{lem:fixed-turning} gives directly
\[
\norm{u^B_{n,m}}_{L^\infty(\D)}\asymp_m n^{1/3},\qquad \lambda^B_{n,m}\asymp_m n^2,
\]
so the limit is $1/6=\Phi_\infty(0)$.

\subsubsection{\textbf{The range $1\le\gamma<\infty$}}
Let
\[
\frac{\log m}{\log n}\to\gamma\ge1.
\]
The same formula \eqref{eq:linfty-common} holds. We only need to compute logarithmic sizes of $x^B_{n,m}$ and $(x^B_{n,m})^2-n^2$.

If $\gamma=1$, then $m=n^{1+o(1)}$. Lemma \ref{lem:zeros-global} gives $x^B_{n,m}=n^{1+o(1)}$. On the other hand, Lemma \ref{lem:zeros-near} gives
\[
x^B_{n,m}-n\ge c n^{1/3}(m-1)^{2/3}=n^{1+o(1)}.
\]
Hence
\[
(x^B_{n,m})^2-n^2=(x^B_{n,m}-n)(x^B_{n,m}+n)=n^{2+o(1)}.
\]
Thus \eqref{eq:linfty-common} gives
\[
\norm{u^B_{n,m}}_{L^\infty(\D)}=n^{1/6+o(1)},\qquad \lambda^B_{n,m}=n^{2+o(1)},
\]
and hence the limit of $\frac{\log \norm{u^B_{n,m}}_{L^\infty(\D)}}{\log\lambda^B_{n,m}}$ is $1/12$.

\begin{remark}\label{rem:lower-sharp-linfty}
In particular, when $m=n\to\infty$, Lemma \ref{lem:zeros-near} gives
\[
x^B_{n,m}\asymp n,\qquad (x^B_{n,m})^2-n^2\asymp n^2.
\]
Thus \eqref{eq:linfty-common} gives
\[
\norm{u^B_{n,m}}_{L^\infty(\D)}\asymp n^{1/6},\qquad \lambda^B_{n,m}\asymp n^2,
\]
which shows that the lower exponent in the $p=\infty$ part of Theorem \ref{thm:uniform} is sharp.
\end{remark}

If $\gamma>1$, then $m=n^{\gamma+o(1)}$ and Lemma \ref{lem:zeros-global} gives $x^B_{n,m}=n^{\gamma+o(1)}$. In particular $x^B_{n,m}\gg n$, and therefore
\[
(x^B_{n,m})^2-n^2=(x^B_{n,m})^2(1+o(1))=n^{2\gamma+o(1)}.
\]
Substituting into \eqref{eq:linfty-common},
\[
\norm{u^B_{n,m}}_{L^\infty(\D)}=n^{-1/3}\frac{n^{\gamma+o(1)}}{n^{\gamma/2+o(1)}}=n^{\gamma/2-1/3+o(1)}.
\]
Also
\[
\lambda^B_{n,m}=n^{2\gamma+o(1)}.
\]
Therefore
\[
\frac{\log \norm{u^B_{n,m}}_{L^\infty(\D)}}{\log\lambda^B_{n,m}}\to\frac{\frac\gamma2-\frac13}{2\gamma}=\frac14-\frac1{6\gamma}.
\]

\subsubsection{\textbf{The endpoint $\gamma=\infty$}}
The case $n$ stays bounded was handled at the start of the proof. We now consider $n\to\infty$ and $\frac{\log m}{\log n}\to\infty$. Since $m\to\infty$, Lemma \ref{lem:zeros-global} gives
\[
\pi m-O(1)\le x^B_{n,m}\le C(m+n).
\]
Because $\log n=o(\log m)$ in the present case, this implies $\log x^B_{n,m}=\log m+o(\log m)$ and, in particular, $n/x^B_{n,m}\to0$. Consequently
\[
(x^B_{n,m})^2-n^2=(x^B_{n,m})^2\left(1-\frac{n^2}{(x^B_{n,m})^2}\right)=(x^B_{n,m})^2(1+o(1)).
\]
Thus the common formula \eqref{eq:linfty-common} becomes
\[
\norm{u^B_{n,m}}_{L^\infty(\D)}\asymp n^{-1/3}\frac{x^B_{n,m}}{\big((x^B_{n,m})^2-n^2\big)^{1/4}}\asymp n^{-1/3}(x^B_{n,m})^{1/2}.
\]
Taking logarithms, we get
\[
\log \norm{u^B_{n,m}}_{L^\infty(\D)}=\frac12\log x^B_{n,m}-\frac13\log n+O(1)=\left(\frac12+o(1)\right)\log x^B_{n,m}=\left(\frac14+o(1)\right)\log \lambda^B_{n,m}.
\]
Hence the limit is $1/4$. This completes the proof of Theorem \ref{thm:profile} in the case $p=\infty$.
\end{proof}

\begin{remark}\label{rem:upper-sharp-linfty}
In particular, when $n=n_0$ is fixed and $m\to\infty$, Lemma \ref{lem:fixed-order-estimates} gives
\[
\norm{u^B_{n,m}}_{L^\infty(\D)}\asymp (x^B_{n,m})^{1/2},\qquad \lambda^B_{n,m}=(x^B_{n,m})^2,
\]
which shows that the upper exponent in the $p=\infty$ part of Theorem \ref{thm:uniform} is sharp.
\end{remark}

\subsection{The uniform \texorpdfstring{$L^\infty$}{L-infinity} estimates}

\begin{proof}[Proof of Theorem \ref{thm:uniform} for $p=\infty$]
The finitely many eigenfunctions below any fixed eigenvalue threshold may be absorbed into the constants. Therefore we may assume that at least one of $n,m$ is sufficiently large.

First consider $n=0$. Lemma \ref{lem:fixed-order-estimates} gives
\[
\norm{u^B_{0,m}}_{L^\infty(\D)}\asymp (x^B_{0,m})^{1/2},\qquad \lambda^B_{0,m}=(x^B_{0,m})^2.
\]
Thus
\[
c(\lambda_{0,m}^B)^{1/12}\le c(\lambda_{0,m}^B)^{1/4}\le \norm{u^B_{0,m}}_{L^\infty(\D)}\le C(\lambda_{0,m}^B)^{1/4}.
\]

Now let $n\ge1$. Choose an integer $M$ at least as large as the thresholds in Lemmas \ref{lem:normalizing-factors} and \ref{lem:zeros-near}. If $1\le m<M$, Lemma \ref{lem:fixed-turning} gives, after taking the maximum over this finite set of $m$ and absorbing finitely many small values of $n$,
\[
\norm{u^B_{n,m}}_{L^\infty(\D)}\asymp n^{1/3},\qquad \lambda^B_{n,m}\asymp n^2.
\]
This gives the desired bounds in the range $1\le m<M$.

It remains to treat $m\ge M$. By Lemma \ref{lem:normalizing-factors} and Lemma \ref{lem:landau}, the common formula \eqref{eq:linfty-common} holds. If $m\le n$, Lemma \ref{lem:zeros-near} gives
\[
x^B_{n,m}\asymp n,\qquad (x^B_{n,m})^2-n^2\asymp n^{4/3}m^{2/3},
\]
and therefore
\[
\norm{u^B_{n,m}}_{L^\infty(\D)}\asymp n^{1/3}m^{-1/6}.
\]
Since $1\le m\le n$ and $x^B_{n,m}\asymp n$, this implies
\[
\norm{u^B_{n,m}}_{L^\infty(\D)}\ge c (x^B_{n,m})^{1/6} \qquad\text{and}\qquad
\norm{u^B_{n,m}}_{L^\infty(\D)}\le C (x^B_{n,m})^{1/2}.
\]

If $m\ge n$, Lemma \ref{lem:zeros-global} gives $x^B_{n,m}\asymp m$ and $(x^B_{n,m})^2-n^2\asymp (x^B_{n,m})^2$. Hence
\[
\norm{u^B_{n,m}}_{L^\infty(\D)}\asymp n^{-1/3}(x^B_{n,m})^{1/2}.
\]
Since $x^B_{n,m}\ge n$, this is bounded below by $c(x^B_{n,m})^{1/6}$ and above by $C(x^B_{n,m})^{1/2}$. Combining the cases and using $\lambda^B_{n,m}=(x^B_{n,m})^2$ proves
\[
c(\lambda_{n,m}^B)^{1/12}\le \norm{u^B_{n,m}}_{L^\infty(\D)}\le C(\lambda_{n,m}^B)^{1/4}.
\]
The sharpness follows from Remarks \ref{rem:lower-sharp-linfty} and \ref{rem:upper-sharp-linfty}.
\end{proof}

\section{Integral estimates for Bessel functions}
\label{sec:bessel-integrals}

For the $L^p$ norm, after changing variables the one-dimensional Bessel integral $I_p(n,R)$ introduced in the notation and conventions part appears naturally. According to the size of $R$ relative to $n$, we shall need three asymptotic estimates. The first concerns the scale $R-n=O(n^{1/3})$. The second concerns the range $0<R-n\le n$, and the third concerns the range $R-n\ge c n$. We discuss them in the next three subsections.

\subsection{The scale \texorpdfstring{$R-n=O(n^{1/3})$}{R-n=O(n1/3)}}

\begin{lemma}\label{lem:below-turning-lp}
Let $1\le p<\infty$.
\begin{enumerate}[label=(\roman*)]
\item For every fixed $A>0$ there is a constant $C_{p,A}$ such that, whenever $n\ge1$ and $0<R\le n+A n^{1/3}$,
\[
I_p(n,R)\le C_{p,A} n^{4/3-p/3}.
\]
\item For every fixed $\delta>0$ there is a constant $c_{p,\delta}>0$ such that, whenever $n\ge1$ and $R\ge n+\delta n^{1/3}$,
\[
I_p(n,R)\ge c_{p,\delta}n^{4/3-p/3}.
\]
\end{enumerate}
\end{lemma}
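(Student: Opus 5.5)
For part (i), I would split $[0,R]$ at $x=n-n^{1/3}$. On $[n-n^{1/3},R]$, an interval of length at most $(1+A)n^{1/3}$, Landau's bound from Lemma \ref{lem:landau} gives $|J_n(x)|\le Cn^{-1/3}$. This piece therefore contributes at most
\[
C^p n^{-p/3}\cdot (1+A)n^{1/3}\cdot 2n = C_{p,A}n^{4/3-p/3}.
\]

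On $[0,n-n^{1/3}]$ we are below the turning point, where $J_n$ decays exponentially. I would use the standard bound
\[
|J_n(x)|\le C n^{-1/3}\exp\bigl(-c\,(n-x)^{3/2}n^{-1/2}\bigr),\qquad 0\le x\le n .
\]
This can be derived from the uniform Airy-type (Debye) asymptotics in \cite{NIST:DLMF}. Alternatively, for $x\le n/2$ it follows from the elementary bound $|J_n(x)|\le (x/2)^n/n!$, and in the transition range from the monotonicity of $J_n$ on $[0,j'_{n,1}]$ together with Debye's bound $J_n(n\operatorname{sech}\alpha)\le e^{-n(\alpha-\tanh\alpha)}/\sqrt{2\pi n\tanh\alpha}$. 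Substituting $x=n-tn^{1/3}$, this piece becomes
\[
\int n^{-p/3}e^{-cpt^{3/2}}\, x\, n^{1/3}\,dt\le C_p\, n^{-p/3}\cdot n\cdot n^{1/3},
\]
which is again of order $n^{4/3-p/3}$. The main obstacle is stating this exponential decay bound cleanly from earlier material. The paper so far has only Landau's bound and the oscillatory asymptotics, so I would quote the uniform Airy asymptotic $J_n(n+\tau n^{1/3})=2^{1/3}n^{-1/3}\bigl(\Ai(-2^{1/3}\tau)+o(1)\bigr)$, uniform in $\tau$ on compacts. I would combine it with Debye's monotone bound $J_n(n\operatorname{sech}\alpha)\le e^{-n(\alpha-\tanh\alpha)}$ for the tail, where $n(\alpha-\tanh\alpha)\asymp n\alpha^3\asymp (n-x)^{3/2}n^{-1/2}$.

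For part (ii), positivity of the integrand gives $I_p(n,R)\ge I_p(n,n+\delta n^{1/3})$. I would restrict to the window $x=n+\tau n^{1/3}$ with $\tau\in[\delta/2,\delta]$, or with $\tau\in[0,\delta]$. By the same uniform Airy asymptotic (\cite[\S10.20, 10.19.8]{NIST:DLMF}), on this window
\[
J_n(x)=2^{1/3}n^{-1/3}\bigl(\Ai(-2^{1/3}\tau)+o(1)\bigr)
\]
uniformly in $\tau\in[0,\delta]$. Since $\Ai$ is positive and continuous on $[-2^{1/3}\delta,0]$, choosing $\delta$ smaller if needed (which only helps, since $R$ is larger) to avoid the first zero of $\Ai$, we get $|J_n(x)|\ge c_\delta n^{-1/3}$ for $n$ large. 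Alternatively, I would take a subinterval near $\tau=0$, where $\Ai(0)>0$. Integrating then gives
\[
I_p(n,R)\ge c^p_\delta n^{-p/3}\cdot \delta n^{1/3}\cdot n ,
\]
which is the claimed lower bound $c_{p,\delta}n^{4/3-p/3}$. For the finitely many small $n$, the constant is adjusted using the fact that $J_n\not\equiv0$.

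The Cauchy formula $J_n(n)\sim c_0 n^{-1/3}$ from Lemma \ref{lem:landau}, combined with a derivative bound, gives an alternative route to (ii) that avoids Airy asymptotics. The bound $|J_n'|\le 1$ holds by the recurrence relations, since $|J_{n\pm1}|\le1$. Hence $|J_n(x)|\ge c_0n^{-1/3}/2$ on $[n,n+c_0n^{-1/3}/2]$, but this interval is too short: it only yields $n^{1-2p/3... }$-type bounds. A sharper estimate is needed, namely $|J_n'(x)|\le Cn^{-2/3}$ near $x=n$, which follows from $J_n'=\tfrac12(J_{n-1}-J_{n+1})$ together with the Airy asymptotic. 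This shows that the $n^{1/3}$-scale Airy asymptotic is genuinely the key input for (ii).
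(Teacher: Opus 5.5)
Your proposal is correct in substance, but it follows a different route from the paper.

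\textbf{Part (i).} The paper does not use exponential decay below the turning point, and it does not use Airy-type asymptotics. It proves, directly from Bessel's integral representation, the weaker pointwise bound $|J_n(n-n^{1/3}T)|\le C_N n^{-1/3}(1+T)^{-N}$ for $0\le T\le n^{2/3}$. For $T\le 1$ this comes from a trivial split at $t=n^{-1/3}$. For $T\ge1$ it uses a smooth cutoff at $t=0$ and non-stationary phase on the circle away from $0$; after rescaling $t=n^{-1/3}u$, it integrates by parts repeatedly using $|\Psi_{n,T}'(u)|\ge c(T+u^2)$. The paper then integrates this bound over $[0,n]$ and uses Landau only on $[n,R]$. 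Polynomial decay in $T$ is all that is needed.

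\textbf{Part (ii).} The paper also works on $[n,n+\delta n^{1/3}]$ with $\delta$ small, as you do. However, it replaces the transition-region asymptotic of \cite[10.19.8]{NIST:DLMF} by three elementary inputs:
\begin{itemize}
\item Cauchy's formula $J_n(n)\sim c_1 n^{-1/3}$;
\item the estimate $J_n'(n)=O(n^{-2/3})$, obtained from the integral representation with weight $\sin t$, split at $t=n^{-1/3}$;
\item a bootstrap through the Bessel equation, giving $\sup_{[n,n+\delta n^{1/3}]}|J_n'|\le Cn^{-2/3}$.
\end{itemize}
So the sharper derivative bound that you say requires Airy asymptotics is in fact obtained elementarily in the paper. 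Your route is shorter and gives stronger (exponential) decay, but it imports Watson/Olver-type uniform results. The paper's route is self-contained and matches its stationary-phase methodology.

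\textbf{A caution on (i).} You need the bound with the prefactor,
\[
J_n(n\operatorname{sech}\alpha)\le \frac{e^{-n(\alpha-\tanh\alpha)}}{\sqrt{2\pi n\tanh\alpha}},
\]
or else Olver's uniform expansion with error bounds valid for all $\zeta>0$. With $x=n-tn^{1/3}$ the prefactor is $\asymp n^{-1/3}t^{-1/4}$. It is exactly what supplies the factor $n^{-1/3}$ together with the decay.

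The version you describe later does not suffice: the bound $e^{-n(\alpha-\tanh\alpha)}$ without the prefactor (Kapteyn's inequality), combined with Airy asymptotics uniform only on compact $\tau$-sets, or with Landau and monotonicity. That combination only gives $|J_n(x)|\lesssim \min(n^{-1/3},e^{-ct^{3/2}})$. On the range $1\ll t\lesssim (\log n)^{2/3}$ this is no better than Landau's bound, and integrating yields $n^{4/3-p/3}(\log n)^{2/3}$, a logarithmic loss.

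A minor point: on $[n-n^{1/3},R]$ the weight satisfies $x\le (1+A)n$, not $2n$. This is harmless, since it is absorbed into $C_{p,A}$.
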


\begin{proof}
We first prove the upper bound. We shall use Bessel’s integral representation and give a pointwise estimate for $x\le n$. Write
\[
x=n-n^{1/3}T,\qquad 0\le T\le n^{2/3}.
\]
We claim that, for every fixed integer $N\ge1$, there exists $C_N$ such that
\[
|J_n(n-n^{1/3}T)|\le C_N n^{-1/3}(1+T)^{-N},\qquad 0\le T\le n^{2/3}.
\]
By Bessel's integral representation,
\[
J_n(x)=\frac1{2\pi}\int_{-\pi}^{\pi}e^{i(x\sin t-nt)}\,dt.
\]

We first treat the range $0\le T\le1$. In this case it is enough to prove $|J_n(n-n^{1/3}T)|\le Cn^{-1/3}$. Let $\phi(t)=x\sin t-nt, x=n-n^{1/3}T$, and $\delta=n^{-1/3}$. On $[-\delta,\delta]$ we use only the length of the interval $\left|\int_{-\delta}^{\delta}e^{i\phi(t)}\,dt\right| \le 2\delta =2n^{-1/3}$. We now estimate the integral over $[\delta,\pi]$. Since $\phi'(t)=x\cos t-n$, and $x\le n$, for $t\in[\delta,\pi/2]$ we have
\[
|\phi'(t)|=n-x\cos t\ge n(1-\cos t)\ge c n t^2\ge c n^{1/3}.
\]
For $t\in[\pi/2,\pi]$, since $\cos t\le0$ we also have $|\phi'(t)|\ge n\ge n^{1/3}$. Therefore $|\phi'(t)|\ge c n^{1/3}$ for $t\in[\delta,\pi]$. Moreover, $\phi''(t)=-x\sin t\le0$ on $[\delta,\pi]$. Applying Lemma \ref{lem:elementary-ibp} with $\lambda=1$ and $M=cn^{1/3}$, we get $\left|\int_{\delta}^{\pi}e^{i\phi(t)}\,dt\right| \le Cn^{-1/3}$. The same argument applies to $[-\pi,-\delta]$. We obtain the desired bound for $0\le T\le1$.

It remains to consider $T\ge1$. Let $\chi\in C_c^\infty((-2\eta,2\eta))$ be $1$ on $[-\eta,\eta]$, where $\eta>0$ is fixed sufficiently small. We split the integration representation above into the part with the factor $1-\chi(t)$ and the part with the factor $\chi(t)$.

On the support of $1-\chi$ we have
\[
|x\cos t-n|\ge c_\eta n,\qquad 0\le x\le n.
\]
Since the phase and amplitude are $2\pi$-periodic, repeated integration by parts on the circle gives no boundary terms. More precisely, with
\[
\mathcal L_\phi^* f:=-\frac{d}{dt}\left(\frac{f(t)}{i\phi'(t)}\right),
\]
one has
\[
\int_{-\pi}^{\pi}(1-\chi(t))e^{i\phi(t)}\,dt=\int_{-\pi}^{\pi}(\mathcal L_\phi^*)^r(1-\chi)(t)e^{i\phi(t)}\,dt .
\]
On this support $|\phi'|\ge c_\eta n$ and $|\phi^{(k)}|\le C_{k,\eta}n$ for $k\ge2$, hence an induction gives $| (\mathcal L_\phi^*)^r(1-\chi)(t)|\le C_{r,\eta}n^{-r}$. This part is therefore $O_{N,\eta}(n^{-N})$, which is smaller than $C_Nn^{-1/3}T^{-N}$ since $T\le n^{2/3}$.

For the localized part let $t=n^{-1/3}u$. It becomes
\[
n^{-1/3}\int_{-n^{1/3}\pi}^{n^{1/3}\pi} e^{i\Psi_{n,T}(u)}a(u)\,du,\qquad a(u)=\chi(n^{-1/3}u),
\]
where
\[
\Psi_{n,T}(u)=(n-n^{1/3}T)\sin(n^{-1/3}u)-n^{2/3}u .
\]
We have
\[
\Psi_{n,T}'(u)=-T\cos(n^{-1/3}u)-n^{2/3}(1-\cos(n^{-1/3}u))=-T\cos(n^{-1/3}u)-2n^{2/3}\sin^2\left(\frac{n^{-1/3}u}2\right),
\]
\[
\Psi_{n,T}''(u)=-(n^{1/3}-n^{-1/3}T)\sin(n^{-1/3}u).
\]
On the support of $a$, after choosing $\eta$ small enough, $|n^{-1/3}u|\le2\eta$ and
\[
|\Psi_{n,T}'(u)|\ge c(T+u^2),\qquad |\Psi_{n,T}''(u)|\le C|u|,\qquad |\Psi_{n,T}^{(k)}(u)|\le C_k\quad(k\ge3),
\]
and all derivatives of $a$ are bounded uniformly in $n$ and $T$. Define
\[
\mathcal L_\Psi^* f:=-\frac{d}{du}\left(\frac{f(u)}{i\Psi_{n,T}'(u)}\right).
\]
We claim that, for every $r\ge1$,
\[
|(\mathcal L_\Psi^*)^r a(u)|\le C_r(T+u^2)^{-r}.
\]
Indeed, this follows by induction. The case $r=0$ is clear. When $\mathcal L_\Psi^*$ is applied to a term already obtained, differentiating a numerator either differentiates $a$ or changes one factor $\Psi_{n,T}^{(j)}$ into $\Psi_{n,T}^{(j+1)}$, while differentiating a factor $(\Psi_{n,T}')^{-q}$ introduces one factor $\Psi_{n,T}''$ and one extra power of $(\Psi_{n,T}')^{-1}$. Hence, after $r$ applications, each term is a bounded linear combination of products of derivatives of $a$ and of $\Psi_{n,T}^{(j)}$, $j\ge2$, divided by at least $r$ powers of $\Psi_{n,T}'$, with one additional denominator power for each phase derivative appearing in the numerator. Since $T\ge1$, the bounds above imply $|\Psi_{n,T}^{(j)}|\le C_j(T+u^2)$ for every $j\ge2$. The extra denominator powers therefore absorb these phase derivatives and leave the factor $(T+u^2)^{-r}$.

Taking $r=N+1$ and integrating by parts gives
\[
\left|\int_{-n^{1/3}\pi}^{n^{1/3}\pi} e^{i\Psi_{n,T}(u)}a(u)\,du\right|\le C_N\int_{-\infty}^{\infty}(T+u^2)^{-N-1}\,du\le C_NT^{-N}.
\]
After multiplying by $n^{-1/3}$ and adding the contribution of $1-\chi$, we obtain the claimed bound for $T\ge1$.

We now use the claim in the integral. With $x=n-n^{1/3}T$ we have
\[
\int_0^n |J_n(x)|^p x\,dx\le n\int_0^{n^{2/3}} |J_n(n-n^{1/3}T)|^p n^{1/3}\,dT\le C_{p,N} n^{4/3-p/3}\int_0^\infty (1+T)^{-Np}\,dT.
\]
Taking $N\ge2$ so that $Np\ge2$ gives
\[
\int_0^n |J_n(x)|^p x\,dx\le C_p n^{4/3-p/3}.
\]
If $n<R\le n+A n^{1/3}$, the part above $n$ is bounded by Landau's inequality:
\[
\int_n^R |J_n(x)|^p x\,dx\le C A n^{1/3}\, n\, n^{-p/3}= C_A n^{4/3-p/3}.
\]
This proves the asserted upper bound for $I_p(n,R)$.

We now prove the lower bound. By the Cauchy asymptotic used in Lemma \ref{lem:landau},
\[
J_n(n)=c_1 n^{-1/3}+O(n^{-1}),\qquad c_1>0.
\]
We shall also need the estimate $J_n'(n)=O(n^{-2/3})$. Indeed,
\[
J_n'(n)=\frac{i}{2\pi}\int_{-\pi}^{\pi}(\sin t)\,e^{i(n\sin t-nt)}\,dt
=\Re\left(\frac{i}{\pi}\int_{0}^{\pi}(\sin t)\,e^{i(n\sin t-nt)}\,dt\right).
\]
Let $\psi(t)=t-\sin t$, and split the integral at $t=n^{-1/3}$. On $[0,n^{-1/3}]$ the absolute value is bounded by $\int_0^{n^{-1/3}}t\,dt=O(n^{-2/3})$. On $[n^{-1/3},\pi]$, the quotient $\frac{\sin t}{\psi'(t)}=\cot\frac t2$ is monotone and is bounded by $Cn^{1/3}$. Lemma \ref{lem:elementary-ibp} applied with amplitude $q(t)=\sin t$ and $\lambda=n$ gives the contribution of this interval as $O(n^{-2/3})$. This proves the estimate.

Fix a positive number $\delta\le1$ and let
\[
I_\delta:=[n,n+\delta n^{1/3}],\qquad Y_\delta:=\sup_{x\in I_\delta}|J_n'(x)|.
\]
For $x\in I_\delta$ we have
\[
0\le 1-\frac{n^2}{x^2}=\frac{(x-n)(x+n)}{x^2}\le C\delta n^{-2/3}.
\]
The Bessel equation
\[
J_n''(x)+x^{-1}J_n'(x)+\left(1-\frac{n^2}{x^2}\right)J_n(x)=0
\]
together with Landau's bound from Lemma \ref{lem:landau} gives for $x\in I_\delta$,
\[
|J_n''(x)|\le Cn^{-1}Y_\delta+C\delta n^{-2/3} n^{-1/3}= Cn^{-1}Y_\delta+C\delta n^{-1}.
\]
For every $x\in I_\delta$,
\[
|J_n'(x)|\le |J_n'(n)|+(x-n)\sup_{x\in I_\delta}|J_n''(x)|.
\]
Taking the supremum over $I_\delta$,
\[
Y_\delta\le Cn^{-2/3}+\delta n^{1/3}\big(Cn^{-1}Y_\delta+C\delta n^{-1}\big).
\]
For all sufficiently large $n$, the coefficient of $Y_\delta$ on the right is less than $1/2$, uniformly for $0<\delta\le1$. Thus
\[
Y_\delta\le Cn^{-2/3}.
\]
Consequently,
\[
|J_n(x)-J_n(n)|\le \delta n^{1/3}Y_\delta\le C\delta n^{-1/3}.
\]
Choose $\delta_0\in(0,1]$ so small that $C\delta_0<c_1/4$. Then, for every fixed $0<\delta\le\delta_0$,
\[
|J_n(x)|\ge c n^{-1/3},\qquad n\le x\le n+\delta n^{1/3},
\]
for $n$ sufficiently large with a constant $c>0$ independent of $n$. Hence, whenever $R\ge n+\delta n^{1/3}$,
\[
I_p(n,R)\ge \int_n^{n+\delta n^{1/3}} |J_n(x)|^p x\,dx\ge c_{p,\delta}n^{4/3-p/3}.
\]
This proves the lower bound for $0<\delta\le\delta_0$ and all sufficiently large $n$. Since $I_p(n,R)$ is increasing in $R$, this gives the claimed estimate for every fixed $\delta>0$. After decreasing the lower constant if necessary, the finitely many remaining values of $n$ are also covered, because for each fixed $n$ the integral $I_p(n,n+\delta n^{1/3})$ is positive.
\end{proof}

\subsection{The range \texorpdfstring{$0<R-n\le n$}{R-n=O(n)}}

We shall also use the following elementary averaging observation.

\begin{lemma}\label{lem:phase-average}
Let $1\le p<\infty$. Let $\Theta\subset\mathbb R$ be an interval containing a closed subinterval of length $2\pi$. Suppose that $w$ is positive on $\Theta$ and varies by at most a fixed factor $C_0$ on every subinterval of $\Theta$ of length at most $4\pi$. If $|E(\theta)|\le1/10$ on $\Theta$, then
\[
\int_\Theta |\cos\theta+E(\theta)|^p w(\theta)\,d\theta\ge c_{p,C_0}\int_\Theta w(\theta)\,d\theta .
\]
\end{lemma}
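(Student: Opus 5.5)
The plan is to reduce the claim to a pointwise lower bound on a fixed fraction of each period, and then sum over periods using the slow variation of $w$.

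\emph{Pointwise bound.} Let $S:=\{\theta:|\cos\theta|\ge 1/2\}$. On $\Theta\cap S$ the hypothesis $|E|\le 1/10$ gives
\[
|\cos\theta+E(\theta)|\ge \tfrac12-\tfrac1{10}=\tfrac25,
\]
so the integrand there is at least $(2/5)^p w(\theta)$. The set $S$ is $\pi$-periodic. Every interval of length $\pi$ meets $S$ in a set of measure $2\pi/3$, which in particular contains a subinterval of length at least $\pi/3$. Hence it suffices to show
\[
\int_{\Theta\cap S} w \ge c_{C_0}\int_\Theta w .
\]

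\emph{Covering by pieces.} Write $\Theta$ with endpoints $\alpha<\beta$, where $\beta-\alpha\ge 2\pi$. Partition it into consecutive pieces $P_1,\dots,P_K$ of length between $\pi$ and $2\pi$. This is possible by taking $K=\lfloor(\beta-\alpha)/\pi\rfloor\ge 2$ pieces of equal length $(\beta-\alpha)/K\in[\pi,2\pi)$. If $\Theta$ is open or unbounded, one first exhausts it by closed subintervals of length at least $2\pi$ and passes to the limit by monotone convergence.

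\emph{Per-piece estimate.} Each $P_j$ has length at most $2\pi\le 4\pi$, so $\sup_{P_j}w\le C_0\inf_{P_j}w$. Since $P_j$ has length at least $\pi$, it meets $S$ in measure at least $2\pi/3$. Therefore
\[
\int_{P_j\cap S}w\ \ge\ \tfrac{2\pi}{3}\inf_{P_j}w\ \ge\ \tfrac{2\pi}{3}\cdot\frac{1}{2\pi C_0}\int_{P_j}w=\frac{1}{3C_0}\int_{P_j}w .
\]
Summing over $j$ gives the claim with $c_{p,C_0}=(2/5)^p/(3C_0)$.

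\emph{Main obstacle.} The only delicate point is bookkeeping: every piece must have length at least $\pi$, so that it sees a full period of $S$, and at most $4\pi$, so that $w$ is comparable to a constant on it. The requirement that $\Theta$ contain an interval of length $2\pi$ is exactly what makes such a partition possible, including the end pieces. No result stated earlier in the paper is needed.
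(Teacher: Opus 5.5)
Your proof is correct and follows the paper's argument: bound the integrand below by $(2/5)^p w$ on $\{|\cos\theta|\ge 1/2\}$, then use the bounded variation of $w$ on pieces about one period long. The only difference is bookkeeping. Your equal partition into pieces of length in $[\pi,2\pi)$ avoids the paper's separate step comparing the leftover end interval with its neighbouring full $2\pi$-interval, so you only need comparability of $w$ on intervals of length at most $2\pi$.
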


\begin{proof}
Decompose $\Theta$ into intervals of length $2\pi$, together with at most two end intervals of length less than $2\pi$. For each complete interval $Q$ let
\[
G_Q=\{\theta\in Q:|\cos\theta|\ge1/2\}.
\]
On $G_Q$ we have $|\cos\theta+E(\theta)|\ge2/5$. Since $w$ varies by at most $C_0$ on $Q$,
\[
\int_Q |\cos\theta+E(\theta)|^p w(\theta)\,d\theta
\ge\left(\frac25\right)^p\int_{G_Q}w(\theta)\,d\theta \ge c_{p,C_0}\int_Q w(\theta)\,d\theta .
\]
The complete intervals therefore control their own $w$-mass. Each end interval is adjacent to a complete interval, and the union of the two has length less than $4\pi$. Hence the $w$-mass of the end interval is bounded by a constant multiple of the $w$-mass of that neighboring complete interval. Summing over the complete intervals proves the lemma.
\end{proof}

\begin{lemma}\label{lem:oscillatory-average}
Let $1\le p<\infty$ and $n\ge1$. If $0<S\le n$, then
\[
I_p(n,n+S)\le C_p n^{4/3-p/3}\left(1+\int_0^{S/n^{1/3}}(1+T)^{-p/4}\,dT\right).
\]
If in addition $S\ge \sigma_0 n^{1/3}$ for a fixed $\sigma_0>0$, then
\[
I_p(n,n+S)\asymp_{p,\sigma_0}n^{4/3-p/3}\left(1+\int_0^{S/n^{1/3}}(1+T)^{-p/4}\,dT\right).
\]
The constants may depend on $p$ and on the fixed lower constant $\sigma_0$, but not on $n$ or $S$.
\end{lemma}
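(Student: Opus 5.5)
We split $I_p(n,n+S)=I_p(n,n+K n^{1/3})+\int_{n+Kn^{1/3}}^{n+S}|J_n(x)|^p x\,dx$, with $K\ge K_0$ a large fixed constant taken from Proposition \ref{prop:osc-asymptotic}. Lemma \ref{lem:below-turning-lp}(i) bounds the first piece by $C_p n^{4/3-p/3}$, which is the ``$1$'' term. If $S\le Kn^{1/3}$ there is nothing more to do for the upper bound. For the lower bound with $S\ge\sigma_0 n^{1/3}$, Lemma \ref{lem:below-turning-lp}(ii) with $\delta=\sigma_0$ gives $I_p\ge c n^{4/3-p/3}$. This already dominates the integral term when $S/n^{1/3}\le K$, because that integral is then at most $K$.

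For the main range $x=n+n^{1/3}T$ with $K\le T\le S/n^{1/3}\le n^{2/3}$, we use $x\asymp n$ and $x^2-n^2\asymp n\cdot n^{1/3}T$. Lemma \ref{lem:pointwise-bessel} then gives $|J_n(x)|^p\le C n^{-p/3}T^{-p/4}$. Hence
\[
\int_{n+Kn^{1/3}}^{n+S}|J_n|^p x\,dx\le C n\cdot n^{1/3}\,n^{-p/3}\int_K^{S/n^{1/3}}T^{-p/4}\,dT,
\]
which is the claimed upper bound since $T^{-p/4}\asymp(1+T)^{-p/4}$ for $T\ge K$.

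For the matching lower bound on this range, we choose $K=K_\varepsilon$ with $\varepsilon=1/10$, so that \eqref{eq:bessel-asymptotic} holds with error at most $1/10$. Then $|J_n(x)|^p x\asymp n^{1-p/3}T^{-p/4}\,|\cos(\Theta_n(x)-\pi/4)+E|^p$. Next we change variables to $\theta=\Theta_n(x)-\pi/4$. Since $\Theta_n'(x)=\sqrt{x^2-n^2}/x\asymp n^{-1/3}T^{1/2}$, the integral becomes $\int|\cos\theta+E|^p w(\theta)\,d\theta$ with weight $w=n^{1-p/3}T^{-p/4}\,x/\sqrt{x^2-n^2}$. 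The main obstacle is to verify the hypotheses of Lemma \ref{lem:phase-average}: the $\theta$-interval must contain a full period, and $w$ must vary by at most a bounded factor over $\theta$-windows of length $4\pi$.

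To check the window condition, note that a $\theta$-window of length $4\pi$ corresponds to $\Delta x\asymp n^{1/3}T^{-1/2}$, i.e. $\Delta T\asymp T^{-1/2}\le 1$ for $T\ge K$. Over such a window $T$ changes by a bounded factor, so $w$ (a power of $T$) does as well. The total $\theta$-length is $\Theta_n(n+S)-\Theta_n(n+Kn^{1/3})\asymp (S/n^{1/3})^{3/2}-K^{3/2}$. This is at least $2\pi$ once $S/n^{1/3}\ge 2K$, say. If instead $S/n^{1/3}<2K$, the integral term is $O(K)$ and is absorbed by the turning-point lower bound above.

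Lemma \ref{lem:phase-average} then bounds the integral below by $c\int w\,d\theta=c\int_{n+Kn^{1/3}}^{n+S}n^{-p/3}T^{-p/4}x\,dx\asymp n^{4/3-p/3}\int_K^{S/n^{1/3}}T^{-p/4}\,dT$. The remaining piece $\int_0^K(1+T)^{-p/4}\,dT\le K$ is again absorbed by the turning-point bound, which gives the two-sided estimate.
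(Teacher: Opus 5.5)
Your proposal is correct and follows essentially the same route as the paper. You use Lemma~\ref{lem:below-turning-lp} for the turning-point piece, Lemma~\ref{lem:pointwise-bessel} for the upper bound on the oscillatory range, and Proposition~\ref{prop:osc-asymptotic} with the change of variables $\theta=\Theta_n(x)-\pi/4$ and Lemma~\ref{lem:phase-average} for the lower bound. The only cosmetic differences are that you split at $Kn^{1/3}$ rather than $n^{1/3}$ in the upper bound. You also check the slow variation of the weight by noting that $T$ changes by a bounded factor over a $4\pi$-window in $\theta$, instead of bounding $\frac{d}{d\theta}\log w$ directly.
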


\begin{proof}
For the upper bound, Lemma \ref{lem:below-turning-lp}(i) applied with $A=1$ gives
\[
I_p(n,n+\min(S,n^{1/3}))\le C_p n^{4/3-p/3}.
\]
If $S\le n^{1/3}$, this already proves the desired estimate. We may therefore assume $S>n^{1/3}$. For $s>0$, Lemma \ref{lem:pointwise-bessel} gives
\[
|J_n(n+s)|\le C\big((n+s)^2-n^2\big)^{-1/4}\le C(ns)^{-1/4}.
\]
Hence
\[
\int_{n+n^{1/3}}^{n+S}|J_n(x)|^p x\,dx\le C_p\int_{n^{1/3}}^S n^{1-p/4}s^{-p/4}\,ds= C_p n^{4/3-p/3}\int_1^{S/n^{1/3}}T^{-p/4}\,dT.
\]
Combining the two estimates proves the desired upper bound for $I_p(n,n+S)$.

We now prove the lower bound. Let $\tau_0=\min\{\sigma_0,1\}$. By the lower estimate in Lemma \ref{lem:below-turning-lp}, the part $0\le x\le n+\tau_0n^{1/3}$ contributes $c_{p,\sigma_0}n^{4/3-p/3}$. This supplies the first term on the right hand side. It remains to prove the lower bound for the integration part.

Choose $K\ge2$ large enough so that the error term in Proposition \ref{prop:osc-asymptotic} is bounded by $1/10$ whenever $x-n\ge K n^{1/3}$. Let $a=n+K n^{1/3}, b=n+S$. If $b\le a$, then $S/n^{1/3}\le K$, and the right hand side in the asserted lower bound is bounded by a constant multiple of $n^{4/3-p/3}$. The initial contribution $I_p(n,n+\tau_0 n^{1/3})$ already proves the estimate. Hence assume $b>a$.

On $[a,b]$ Proposition \ref{prop:osc-asymptotic} gives
\[
J_n(x)=\sqrt{\frac2\pi}(x^2-n^2)^{-1/4}\big(\cos\vartheta(x)+E(x)\big),
\]
where $\vartheta(x)=\sqrt{x^2-n^2}-n\arccos\frac n x-\frac\pi4$ and $|E(x)|\le1/10$. Moreover $\vartheta'(x)=\frac{\sqrt{x^2-n^2}}{x}>0$. Let $q(x)=x(x^2-n^2)^{-p/4}$. We shall estimate the integral
\[
\int_a^b |\cos\vartheta(x)+E(x)|^p q(x)\,dx.
\]

It is convenient to change variables from $x$ to $\theta=\vartheta(x)$. Let $x=x(\theta)$ be the inverse map and $w(\theta)=\frac{q(x(\theta))}{\vartheta'(x(\theta))}$. Thus
\[
\int_a^b |\cos\vartheta(x)+E(x)|^p q(x)\,dx=\int_{\vartheta(a)}^{\vartheta(b)}|\cos\theta+E(x(\theta))|^p w(\theta)\,d\theta.
\]
We first investigate the variation of $w$. Write $x=n+s$. On $[a,b]$ we have $K n^{1/3}\le s\le n$, and
\[
w(\vartheta(x))=\frac{q(x)}{\vartheta'(x)}=x^2(x^2-n^2)^{-(p+2)/4}.
\]
Hence
\[
\left|\frac{d}{dx}\log w(\vartheta(x))\right|\le\frac2x+C_p\frac x{x^2-n^2}\le C_p s^{-1}.
\]
Since
\[
\frac{dx}{d\theta}=\frac{x}{\sqrt{x^2-n^2}}\le C\left(\frac ns\right)^{1/2},
\]
we get
\[
\left|\frac{d}{d\theta}\log w(\theta)\right|\le C_p n^{1/2}s^{-3/2}\le C_p K^{-3/2}.
\]
Thus $w$ varies by at most the fixed factor $\exp(4\pi C_pK^{-3/2})$ on every $\theta$-interval of length at most $4\pi$.

Let $\Theta=[\vartheta(a),\vartheta(b)]$. If $|\Theta|<2\pi$, from
\[
\vartheta'(x)\ge cK^{1/2}n^{-1/3}\qquad (a\le x\le b)
\]
we know $b-a\le C_K n^{1/3}$. In this case $S/n^{1/3}\le C_K$, so the right hand side in the lower bound is again bounded by a constant multiple of $n^{4/3-p/3}$. Thus we may assume that $\Theta$ contains at least one complete interval of length $2\pi$.

By Lemma \ref{lem:phase-average}, applied to $w$ and to $E(x(\theta))$ on $\Theta$, we get
\[
\int_\Theta |\cos\theta+E(x(\theta))|^p w(\theta)\,d\theta\ge c_p\int_\Theta w(\theta)\,d\theta.
\]
Changing variables back from $\theta$ to $x$, we obtain
\[
\int_a^b |J_n(x)|^p x\,dx\ge c_p\int_a^b x(x^2-n^2)^{-p/4}\,dx.
\]
For $x=n+s$, $K n^{1/3}\le s\le S\le n$, we have $x\ge n$ and $x^2-n^2\le 3ns$. Therefore
\[
\int_a^b x(x^2-n^2)^{-p/4}\,dx\ge c_p\int_{Kn^{1/3}}^S n^{1-p/4}s^{-p/4}\,ds=c_pn^{4/3-p/3}\int_K^{S/n^{1/3}}T^{-p/4}\,dT.
\]
Together with the first contribution from the initial part $0\le x\le n+\tau_0n^{1/3}$, this proves the lower bound.
\end{proof}

\subsection{The range \texorpdfstring{$R-n\ge c n$}{R-n>cn}}

\begin{lemma}\label{lem:far-lp-integral}
Let $1\le p<\infty$ and $n\ge1$. Suppose $R-n\ge c n$ for some fixed $c>0$. Then
\[
I_p(n,R)\asymp_{p,c}
\begin{cases}
 R^{2-p/2},&1\le p<4,\\[4pt]
 1+\log R,&p=4,\\[4pt]
 n^{4/3-p/3},&p>4.
\end{cases}
\]
\end{lemma}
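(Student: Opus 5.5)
I would split $I_p(n,R)$ at $x=2n$. Write
\[
I_p(n,R)=I_p(n,\min(R,2n))+\int_{\min(R,2n)}^R |J_n(x)|^p x\,dx .
\]
For the first piece, apply Lemma \ref{lem:oscillatory-average} with $S=\min(R,2n)-n$. We have $cn\le S\le n$ after replacing $c$ by $\min(c,1)$, and $S\ge\sigma_0 n^{1/3}$ for all $n\ge1$ with $\sigma_0=\min(c,1)$. This gives
\[
I_p(n,\min(R,2n))\asymp n^{4/3-p/3}\Bigl(1+\int_0^{S/n^{1/3}}(1+T)^{-p/4}\,dT\Bigr),\qquad S/n^{1/3}\asymp n^{2/3}.
\]
Evaluating the $T$-integral:
\begin{itemize}
\item for $p<4$ it is $\asymp n^{2/3(1-p/4)}$, so the piece is $\asymp n^{2-p/2}$;
\item for $p=4$ it is $\asymp 1+\log n$, so the piece is $\asymp 1+\log n$;
\item for $p>4$ it is $\asymp 1$, so the piece is $\asymp n^{4/3-p/3}$.
\end{itemize}

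For the second piece, on $x\ge 2n$ Lemma \ref{lem:pointwise-bessel} gives $|J_n(x)|\le Cx^{-1/2}$, since $x^2-n^2\ge \tfrac34x^2$. So the tail is bounded above by
\[
C\int_{2n}^R x^{1-p/2}\,dx,
\]
which is $\lesssim R^{2-p/2}$ for $p<4$, $\lesssim \log(R/2n)$ for $p=4$, and $\lesssim n^{2-p/2}\le n^{4/3-p/3}$ for $p>4$. Combined with the first piece this gives all the upper bounds, using $n\lesssim R$ and $\log n\le\log R$.

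For the matching lower bounds, the case $p>4$ is already done by the first piece. For $p\le4$ I need the tail to contribute $\gtrsim R^{2-p/2}$, respectively $\gtrsim \log R$, when $R\gg n$. When $R\le Cn$ the first piece already suffices, because then $R\asymp n$.

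To handle the tail I reuse the phase-averaging argument. On $[2n,R]$ we have $x-n\ge n\ge K n^{1/3}$ once $n$ is large, so Proposition \ref{prop:osc-asymptotic} applies with error at most $1/10$. Change variables to $\theta=\vartheta(x)$, where $\vartheta'(x)=\sqrt{x^2-n^2}/x\in[\sqrt3/2,1]$. The weight is $w=x^2(x^2-n^2)^{-(p+2)/4}$, and its logarithmic derivative in $\theta$ is $O(1/x)$, so $w$ varies by a bounded factor on $\theta$-intervals of length $4\pi$. Lemma \ref{lem:phase-average} then gives
\[
\int_{2n}^R |J_n|^p x\,dx\gtrsim \int_{2n}^R x^{1-p/2}\,dx,
\]
provided $R-2n\ge C$ so that a full period fits. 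This yields $R^{2-p/2}$ for $p<4$ and $\log(R/2n)$ for $p=4$ once $R\ge 4n$, say. For $p=4$ one then adds $1+\log n$ from the first piece to obtain $\asymp 1+\log R$.

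Small $n$, where the threshold in Proposition \ref{prop:osc-asymptotic} for $x\ge 2n$ might fail, is handled by Lemma \ref{lem:fixed-order-estimates} for the finitely many $n\le N$; there the claimed asymptotics coincide, since $n^{4/3-p/3}\asymp 1$. I expect the main obstacle to be only this bookkeeping of uniformity: making sure the constant $K$ of Proposition \ref{prop:osc-asymptotic} is compatible with $x\ge 2n$ for all $n$ beyond a fixed $N$, and checking the bounded variation of $w$ uniformly in $n$ and $R$.
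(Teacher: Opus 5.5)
Your proof is correct and is essentially the paper's argument. The paper also combines the turning-point estimate of Lemma~\ref{lem:below-turning-lp}, the oscillatory asymptotic of Proposition~\ref{prop:osc-asymptotic} and the phase averaging of Lemma~\ref{lem:phase-average}, and it checks slow variation of $w$ separately for $x-n\le n$ (citing Lemma~\ref{lem:oscillatory-average}) and $x-n\ge n$, which is exactly your split at $x=2n$. The differences are only in packaging: you apply Lemma~\ref{lem:oscillatory-average} as a black box on $[0,\min(R,2n)]$ and use Lemma~\ref{lem:pointwise-bessel} for the tail upper bound, whereas the paper treats $[n+Kn^{1/3},R]$ in one pass and evaluates $\int x(x^2-n^2)^{-p/4}\,dx$ via the substitution $u=x^2-n^2$.
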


\begin{proof}
Choose $K$ large enough such that the error term in Proposition \ref{prop:osc-asymptotic} is bounded by $1/10$ whenever $x-n\ge K n^{1/3}$. The part $0\le x\le n+K n^{1/3}$ contributes $\asymp_p n^{4/3-p/3}$ by Lemma \ref{lem:below-turning-lp}.

On the interval $[n+K n^{1/3},R]$, Proposition \ref{prop:osc-asymptotic} gives
\[
J_n(x)=\sqrt{\frac2\pi}(x^2-n^2)^{-1/4}(\cos\vartheta(x)+E(x)),
\]
with $|E(x)|\le1/10$ and $\vartheta'(x)=\sqrt{x^2-n^2}/x>0$. Set $q(x)=x(x^2-n^2)^{-p/4}$. The upper bound
\[
\int_{n+K n^{1/3}}^R |J_n(x)|^p x\,dx\le C_p\int_{n+K n^{1/3}}^R q(x)\,dx
\]
follows directly from this formula. For the reverse inequality, we use the same change of variables $\theta=\vartheta(x)$ and let $w(\theta)=\frac{q(x(\theta))}{\vartheta'(x(\theta))}$. When $x-n\le n$, the slow variation of $w$ was verified in the proof of Lemma \ref{lem:oscillatory-average}. When $x-n\ge n$, we have $d(\log q)/dx=O_p(1/x)$ and $\vartheta'$ is bounded above and below by positive constants, so $w$ again varies by only a fixed factor on intervals of length at most $4\pi$. Finally, $R-n\ge cn$ implies that $[\vartheta(n+Kn^{1/3}),\vartheta(R)]$ has length at least $2\pi$ for all sufficiently large $n$. The remaining finitely many $n$ are absorbed into the constants. Lemma \ref{lem:phase-average} gives
\[
\begin{aligned}
\int_{n+K n^{1/3}}^R |J_n(x)|^p x\,dx &=\int_{\vartheta(n+Kn^{1/3})}^{\vartheta(R)} |\cos\theta+E(x(\theta))|^p w(\theta)\,d\theta \\
&\ge c_p\int_{\vartheta(n+Kn^{1/3})}^{\vartheta(R)} w(\theta)\,d\theta= c_p\int_{n+K n^{1/3}}^R q(x)\,dx .
\end{aligned}
\]
Consequently
\[
\int_{n+K n^{1/3}}^R |J_n(x)|^p x\,dx\asymp_p \int_{n+K n^{1/3}}^R x(x^2-n^2)^{-p/4}\,dx.
\]
Here $R<n+Kn^{1/3}$ can occur only for finitely many $n$, and these values are absorbed into the constants.

With the substitution $u=x^2-n^2$, the last integral is a constant multiple of
\[
\int_{Kn^{1/3}(2n+Kn^{1/3})}^{R^2-n^2}u^{-p/4}\,du.
\]
If $1\le p<4$, the upper endpoint dominates. Since $R-n\ge cn$, one has $R^2-n^2\asymp_c R^2$, and the contribution is $\asymp_{p,c} R^{2-p/2}$. This also dominates the initial contribution $n^{4/3-p/3}$. If $p=4$, the same computation gives a logarithm,
\[
1+\log\frac{R^2-n^2}{n^{4/3}}\asymp_c 1+\log R.
\]
If $p>4$, the lower endpoint dominates, and this part is $\asymp_p n^{4/3-p/3}$. This proves the lemma.
\end{proof}

\section{The case \texorpdfstring{$1\le p<\infty$}{1<=p<infinity}}

In this section we prove the $L^p$ norm parts of Theorems \ref{thm:uniform} and \ref{thm:profile}.

\begin{lemma}\label{lem:lp-normalization}
Let $1\le p<\infty$ and $n\ge1$. For either boundary condition $B\in\{D,N\}$,
\[
\norm{u^B_{n,m}}_{L^p(\mathbb D)}^p\asymp_p (A^B_{n,m})^{-p}(x^B_{n,m})^{-2}I_p(n,x^B_{n,m}).
\]
The same formula holds for $n=0$, with a different constant depending only on $p$.
\end{lemma}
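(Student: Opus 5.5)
This is a direct computation in polar coordinates. The eigenfunction factors into a radial Bessel part and an angular part, so the $L^p$ norm factors as well.

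For $n\ge1$, take the representative \eqref{eq:dir-eigenfunction} or \eqref{eq:neu-eigenfunction}. In both cases we can write
\[
u^B_{n,m}(r,\theta)=c_B\,(A^B_{n,m})^{-1}J_n(x^B_{n,m}r)\cos(n\theta),
\]
with $c_B=\sqrt{2/\pi}$. For the Neumann case this uses the definition of $A^N_{n,m}$, which already absorbs the factor $(1-n^2/\rho_{n,m}^2)^{1/2}$. Fubini then gives
\[
\norm{u^B_{n,m}}_{L^p(\D)}^p=c_B^p(A^B_{n,m})^{-p}\int_0^{2\pi}|\cos(n\theta)|^p\,d\theta\int_0^1|J_n(x^B_{n,m}r)|^p r\,dr .
\]
The angular integral is independent of $n\ge1$: the substitution $\phi=n\theta$ turns it into $\int_0^{2\pi}|\cos\phi|^p\,d\phi$, since $n\theta$ sweeps $n$ full periods and we divide by $n$. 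So it is a positive constant depending only on $p$.

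In the radial integral we substitute $x=x^B_{n,m}r$. Then $r\,dr=(x^B_{n,m})^{-2}x\,dx$, and the range $r\in[0,1]$ becomes $x\in[0,x^B_{n,m}]$. This gives exactly
\[
\int_0^1|J_n(x^B_{n,m}r)|^p r\,dr=(x^B_{n,m})^{-2}I_p(n,x^B_{n,m}).
\]
In fact the claimed relation is then an identity up to a constant depending only on $p$, which is stronger than $\asymp_p$.

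For $n=0$ the eigenfunction is radial, $u^B_{0,m}=\pi^{-1/2}(A^B_{0,m})^{-1}J_0(x^B_{0,m}r)$. Here $A^D_{0,m}=|J_1(j_{0,m})|$, and $A^N_{0,m}=|J_0(\rho_{0,m})|$ because the extra factor $(1-0/\rho^2)^{1/2}$ equals $1$. The angular integral is just $2\pi$, and the same substitution in the radial integral gives the formula with a different $p$-dependent constant.

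There is no real obstacle here. The only point to check is that the constant $c_B$ and the definition of $A^B_{n,m}$ match the normalization in \eqref{eq:neu-eigenfunction}, which they do by construction.
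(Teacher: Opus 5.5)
Your proposal is correct and follows essentially the same route as the paper: Fubini in polar coordinates, observing that the angular integral $\int_0^{2\pi}|\cos(n\theta)|^p\,d\theta$ is independent of $n\ge1$, and the substitution $x=x^B_{n,m}r$ in the radial integral. Your explicit checks that $A^N_{n,m}$ absorbs the factor $(1-n^2/\rho_{n,m}^2)^{1/2}$, and that the $n=0$ normalizations match $A^B_{0,m}$, fill in what the paper leaves as ``identical'' and ``changes only the implicit constant.''
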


\begin{proof}
We prove the Dirichlet case. The Neumann case is identical after replacing the normalizing factor by the one in \eqref{eq:neu-eigenfunction}. From \eqref{eq:dir-eigenfunction},
\[
\norm{u^D_{n,m}}_{L^p(\mathbb D)}^p=\left(\frac2\pi\right)^{p/2}|J_{n+1}(j_{n,m})|^{-p}\int_0^{2\pi}|\cos(n\theta)|^p\,d\theta \int_0^1 |J_n(j_{n,m}r)|^p r\,dr .
\]
For $n\ge1$, the angular integral is a positive constant depending only on $p$:
\[
\int_0^{2\pi}|\cos(n\theta)|^p\,d\theta=\int_0^{2\pi}|\cos\theta|^p\,d\theta.
\]
The change of variables $x=j_{n,m}r$ gives
\[
\int_0^1 |J_n(j_{n,m}r)|^p r\,dr=j_{n,m}^{-2}\int_0^{j_{n,m}}|J_n(x)|^p x\,dx.
\]
This proves the lemma for Dirichlet eigenfunctions. For Neumann eigenfunctions one uses \eqref{eq:neu-eigenfunction}, and for $n=0$ the angular factor is constant which changes only the implicit constant.
\end{proof}

We now apply the integral estimates from Section \ref{sec:bessel-integrals} at the zeros of $J_n$ and $J_n'$. This is the next proposition.

\begin{proposition}\label{prop:lp-bessel-estimates}
Let $1\le p<\infty$ and $B\in\{D,N\}$. The following estimates hold uniformly in the indicated ranges.
\begin{enumerate}[label=(\roman*)]
\item If $m=m_0$ is fixed and $n\to\infty$, then
\[
x^B_{n,m_0}=n+O_{m_0}(n^{1/3}),\qquad A^B_{n,m_0}\asymp_{m_0}n^{-2/3},\qquad I_p(n,x^B_{n,m_0})\asymp_{p,m_0} n^{4/3-p/3}.
\]
\item There exists an integer $M_1\ge1$ such that, if $M_1\le m\le n$, then
\[
x^B_{n,m}-n\asymp n^{1/3}m^{2/3},\qquad A^B_{n,m}\asymp n^{-2/3}m^{1/6},
\]
\[
I_p(n,x^B_{n,m})\asymp_p
\begin{cases}
n^{4/3-p/3}m^{2/3-p/6},&1\le p<4,\\[4pt]
1+\log m,&p=4,\\[4pt]
n^{4/3-p/3},&p>4.
\end{cases}
\]
\item If $n\ge1$ and $m\ge n$, then
\[
x^B_{n,m}\asymp m,\qquad A^B_{n,m}\asymp (x^B_{n,m})^{-1/2},
\]
\[
I_p(n,x^B_{n,m})\asymp_p
\begin{cases}
(x^B_{n,m})^{2-p/2},&1\le p<4,\\[4pt]
1+\log x^B_{n,m},&p=4,\\[4pt]
n^{4/3-p/3},&p>4.
\end{cases}
\]
\end{enumerate}
\end{proposition}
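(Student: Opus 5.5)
The plan is to read off each of the three regimes from the zero estimates (Lemmas \ref{lem:fixed-turning}, \ref{lem:zeros-near}, \ref{lem:zeros-global}), the normalizing factor estimates (Lemma \ref{lem:fixed-turning} and Lemma \ref{lem:normalizing-factors}), and the three integral estimates of Section \ref{sec:bessel-integrals}. We only need to check that $R=x^B_{n,m}$ lies in the hypotheses of the right lemma, and then evaluate the resulting elementary integral.

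\emph{Case (i).} Lemma \ref{lem:fixed-turning} gives $x^B_{n,m_0}=n+c_{m_0}n^{1/3}+o(n^{1/3})$ with $c_{m_0}=2^{-1/3}a_{m_0}$ or $2^{-1/3}a'_{m_0}$, both positive. It also gives $A^B_{n,m_0}\asymp_{m_0}n^{-2/3}$. Hence for large $n$,
\[
n+\tfrac12 c_{m_0}n^{1/3}\le x^B_{n,m_0}\le n+2c_{m_0}n^{1/3}.
\]
Lemma \ref{lem:below-turning-lp}(i) with $A=2c_{m_0}$ and (ii) with $\delta=c_{m_0}/2$ then give $I_p\asymp n^{4/3-p/3}$. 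Finitely many small $n$ are absorbed into the constants.

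\emph{Case (ii).} Take $M_1\ge\max(M_*,M_0)$ so that Lemmas \ref{lem:zeros-near} and \ref{lem:normalizing-factors} apply. For $m\le n$, Lemma \ref{lem:zeros-near} gives
\[
C_1(m-1)^{2/3}n^{1/3}<x-n<C_1m^{2/3}n^{1/3}+C_2m^{4/3}n^{-1/3}.
\]
The second term is at most $C_2m^{2/3}n^{1/3}$ because $m\le n$, so $x-n\asymp n^{1/3}m^{2/3}$ (using $m-1\ge m/2$). Consequently $x\asymp n$ and $x^2-n^2\asymp n^{4/3}m^{2/3}$. Lemma \ref{lem:normalizing-factors} then yields
\[
A\asymp n^{-1}\bigl(n^{4/3}m^{2/3}\bigr)^{1/4}=n^{-2/3}m^{1/6}.
\]

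For $I_p$, I set $S=x-n$. If $S\le n$, apply Lemma \ref{lem:oscillatory-average} with $\sigma_0=C_1/2$. Since $S/n^{1/3}\asymp m^{2/3}$, the quantity
\[
1+\int_0^{S/n^{1/3}}(1+T)^{-p/4}\,dT
\]
is comparable to $m^{2/3(1-p/4)}=m^{2/3-p/6}$ for $p<4$, to $1+\log m$ for $p=4$, and to $1$ for $p>4$. At $p=4$ the prefactor $n^{4/3-p/3}$ equals $1$, which matches the stated form.

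The one technical point is that $S$ might exceed $n$ when $m$ is close to $n$, since $S$ could be as large as about $(C_1+C_2)n$. In that subcase $S\ge n\ge cn$, so I would instead use Lemma \ref{lem:far-lp-integral} with $R\asymp n\asymp m$. It gives $R^{2-p/2}\asymp n^{4/3-p/3}m^{2/3-p/6}$ when $m\asymp n$, and $1+\log R\asymp 1+\log m$ at $p=4$, so both routes agree. Alternatively, monotonicity of $I_p$ in $R$ sandwiches $I_p(n,x)$ between the values at $n+\min(S,n)$ and at $n+O(n)$, which again avoids the issue.

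\emph{Case (iii).} For $m\ge n\ge1$, Lemma \ref{lem:zeros-global} gives $x\le m\pi+\tfrac{\pi}{2}(n+1)\le Cm$ and $x\ge m\pi-2\pi$, so $x\asymp m$ for $m\ge3$. For $m\ge M_*$, Lemma \ref{lem:zeros-near} gives $x-n\ge C_1(m-1)^{2/3}n^{1/3}\ge cn$, so $x^2-n^2\asymp x^2$. Lemma \ref{lem:normalizing-factors} then yields $A\asymp x^{-1/2}$ for $m\ge M_0$.

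The finitely many $m$ below the thresholds must still be handled. These force $n\le m$ bounded, so they are handled by Lemma \ref{lem:fixed-order-estimates} with $N$ equal to the threshold (for bounded $n,m$ everything is $\asymp 1$). With $x-n\ge cn$ established, Lemma \ref{lem:far-lp-integral} with $R=x$ gives the claimed $I_p$ directly.

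The main obstacle is bookkeeping at the boundaries between regimes: case (ii) with $m\sim n$, where $S$ may exceed $n$, and the uniformity for small $m$ in (iii). I plan to resolve both by monotonicity in $R$ and by absorbing finitely many cases into the constants.
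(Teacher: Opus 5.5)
Your proposal is correct and follows the paper's proof essentially step for step. It uses the same zero and normalizing-factor lemmas, Lemma \ref{lem:oscillatory-average} for $S\le n$ and Lemma \ref{lem:far-lp-integral} for $S>n$ (where $m\asymp n$) in case (ii), and Lemma \ref{lem:far-lp-integral} with $x^B_{n,m}-n\ge cn$ in case (iii). The only cosmetic difference is in (iii): the paper treats bounded $n$ separately via Lemma \ref{lem:fixed-order-estimates}, whereas you apply Lemmas \ref{lem:normalizing-factors} and \ref{lem:far-lp-integral} uniformly in $n\ge1$ once $m$ exceeds the thresholds, which is legitimate since both lemmas are stated with that uniformity.
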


\begin{proof}
(i) $x^B_{n,m_0}=n+O_{m_0}(n^{1/3})$ and $A^B_{n,m_0}\asymp_{m_0}n^{-2/3}$ are contained in Lemma \ref{lem:fixed-turning}. The upper bound for $I_p(n,x^B_{n,m_0})$ follows from Lemma \ref{lem:below-turning-lp}(i), since $x^B_{n,m_0}\le n+C_{m_0}n^{1/3}$ for all sufficiently large $n$. For the lower bound, choose a fixed $\delta>0$ smaller than the coefficient of $n^{1/3}$-term in Lemma \ref{lem:fixed-turning}. Then $[n,n+\delta n^{1/3}]\subset [0,x^B_{n,m_0}]$ for all large $n$, and the lower estimate in Lemma \ref{lem:below-turning-lp} applies.

(ii) Choose $M_1$ not smaller than the thresholds in Lemmas \ref{lem:zeros-near} and \ref{lem:normalizing-factors}. Lemma \ref{lem:zeros-near} gives
\[
x^B_{n,m}-n\asymp n^{1/3}m^{2/3}.
\]
In particular $x^B_{n,m}\asymp n$ and $(x^B_{n,m})^2-n^2\asymp n^{4/3}m^{2/3}$. Lemma \ref{lem:normalizing-factors} therefore gives $A^B_{n,m}\asymp n^{-2/3}m^{1/6}$. Let $S=x^B_{n,m}-n$. If $S\le n$, then $S/n^{1/3}\asymp m^{2/3}$, and Lemma \ref{lem:oscillatory-average} gives the stated estimates for $I_p(n,x^B_{n,m})$. If $S>n$, then the same relation forces $m\asymp n$. In that subcase $x^B_{n,m}\asymp n$ and Lemma \ref{lem:far-lp-integral} gives exactly the asymptotic stated in (ii) since $m\asymp n$.

(iii) The estimate $x^B_{n,m}\asymp m$ follows from Lemma \ref{lem:zeros-global}, after absorbing the finitely many small pairs into the constants. If $n$ belongs to a fixed finite set, then Lemma \ref{lem:fixed-order-estimates} gives $A^B_{n,m}\asymp (x^B_{n,m})^{-1/2}$ and the stated asymptotic for $I_p(n,x^B_{n,m})$ as $m\to\infty$. Hence it remains to consider $n$ large.

For large $n$ and $m\ge n$, Lemma \ref{lem:zeros-near} gives $x^B_{n,m}-n\ge c n$. Thus Lemma \ref{lem:far-lp-integral} applies with $R=x^B_{n,m}$ and gives the asymptotic for $I_p(n,x^B_{n,m})$. Moreover, since $x^B_{n,m}\ge (1+c)n$ we have $(x^B_{n,m})^2-n^2\asymp (x^B_{n,m})^2$. Lemma \ref{lem:normalizing-factors} then gives $A^B_{n,m}\asymp (x^B_{n,m})^{-1/2}$. This proves all asymptotics in (iii).
\end{proof}

\subsection{The asymptotic proof of Theorem \ref{thm:profile} for finite \texorpdfstring{$p$}{p}}

\begin{proof}[Proof of Theorem \ref{thm:profile} for $1\le p<\infty$]
It is enough to verify the asserted limit after passing to subsequences. Along any sequence either $n\to\infty$, or $n$ is bounded and hence is fixed after passing to a further subsequence. By the convention in the definition, the latter case belongs to the endpoint $\gamma=\infty$ and will be treated at the end of the proof. We first assume that $n\to\infty$ and write $\frac{\log m}{\log n}\to\gamma$.

If $\gamma=0$ and $m$ is bounded, then after passing to a subsequence $m$ is fixed. Proposition \ref{prop:lp-bessel-estimates}(i) gives
\[
\norm{u^B_{n,m}}_{L^p(\D)}^p\asymp_{p,m}(n^{-2/3})^{-p}n^{-2}n^{4/3-p/3}=n^{p/3-2/3}.
\]
Thus
\[
\frac{\log\norm{u^B_{n,m}}_{L^p(\D)}}{\log\lambda^B_{n,m}}\to\frac16-\frac1{3p}=\frac{p-2}{6p},
\]
which is $\Phi_p(0)$.

Assume next that $m\to\infty$. After passing to a further subsequence, we may assume either $m\le n$ or $m\ge n$. For $m\le n$, we must have $0\le\gamma\le1$. Then $x^B_{n,m}=n^{1+o(1)}$ and $\log\lambda^B_{n,m}=2\log n+o(\log n)$. By Lemma \ref{lem:lp-normalization} and Proposition \ref{prop:lp-bessel-estimates}, if $1\le p<4$,
\[
\norm{u^B_{n,m}}_{L^p(\D)}^p\asymp_p\left(\frac nm\right)^{(p-2)/3}.
\]
Therefore
\[
\frac{\log\norm{u^B_{n,m}}_{L^p(\D)}}{\log\lambda^B_{n,m}}\to\frac{(1-\gamma)(p-2)}{6p}.
\]
For $p=4$, the same computation has an additional factor $1+\log m$, which is sub-polynomial in $n$ and does not affect the logarithmic exponent. If $p>4$, then
\[
\norm{u^B_{n,m}}_{L^p(\D)}^p\asymp_p n^{p/3-2/3}m^{-p/6},
\]
and hence
\[
\frac{\log\norm{u^B_{n,m}}_{L^p(\D)}}{\log\lambda^B_{n,m}}\to\frac16-\frac1{3p}-\frac\gamma{12}.
\]

For $m\ge n$, we must have $1\le\gamma\le\infty$. We first consider $\gamma<\infty$. Lemma \ref{lem:zeros-global} gives
\[
x^B_{n,m}=n^{\gamma+o(1)},\qquad \log\lambda^B_{n,m}=2\gamma\log n+o(\log n).
\]
If $1\le p<4$, then Proposition \ref{prop:lp-bessel-estimates} gives
\[
\norm{u^B_{n,m}}_{L^p(\D)}^p\asymp_p 1,
\]
so the exponent is zero. If $p=4$, the norm grows at most like a power of $\log x^B_{n,m}$, and the exponent is again zero. If $p>4$, then
\[
\norm{u^B_{n,m}}_{L^p(\D)}^p\asymp_p (x^B_{n,m})^{p/2-2}n^{4/3-p/3}.
\]
Dividing the logarithm by $\log\lambda^B_{n,m}$ gives
\[
\frac{\log\norm{u^B_{n,m}}_{L^p(\D)}}{\log\lambda^B_{n,m}}\to\frac14-\frac1p-\frac1{6\gamma}+\frac{2}{3p\gamma}.
\]

If $\gamma=\infty$ and $n\to\infty$, Lemma \ref{lem:zeros-global} gives $x^B_{n,m}\asymp m$. In particular $\log n=o(\log x^B_{n,m})$ and $\log\lambda^B_{n,m}=2\log x^B_{n,m}+o(\log x^B_{n,m})$. If $1\le p<4$, Proposition \ref{prop:lp-bessel-estimates} gives $\norm{u^B_{n,m}}_{L^p(\D)}\asymp_p1$, and the exponent is zero. The same is true for $p=4$, since the growth is logarithmic. If $p>4$, then
\[
\norm{u^B_{n,m}}_{L^p(\D)}^p\asymp_p (x^B_{n,m})^{p/2-2}n^{4/3-p/3}.
\]
The factor involving $n$ is sub-polynomial in $x^B_{n,m}$, and hence
\[
\frac{\log\norm{u^B_{n,m}}_{L^p(\D)}}{\log\lambda^B_{n,m}}\to\frac14-\frac1p.
\]

Finally, if $n$ is fixed and $m\to\infty$, Lemma \ref{lem:fixed-order-estimates} and Lemma \ref{lem:lp-normalization} give
\[
\norm{u^B_{n,m}}_{L^p(\D)}\asymp_{p,n}
\begin{cases}
1,&1\le p<4,\\[2pt]
(1+\log x^B_{n,m})^{1/4},&p=4,\\[2pt]
(x^B_{n,m})^{1/2-2/p},&p>4.
\end{cases}
\]
This gives the value $\Phi_p(\infty)$ and completes the proof of Theorem \ref{thm:profile}.
\end{proof}

\subsection{The uniform \texorpdfstring{$L^p$}{Lp} estimates}

\begin{proof}[Proof of Theorem \ref{thm:uniform} for $1\le p<\infty$]
The range of large $n$ with $1\le n\le m$ is covered by Proposition \ref{prop:lp-bessel-estimates}(iii), while bounded $n$ are covered by Lemma \ref{lem:fixed-order-estimates}. For the range $1\le m\le n$, the finitely many small values of $m$ are covered by Proposition \ref{prop:lp-bessel-estimates}(i), and the large values of $m$ are covered by Proposition \ref{prop:lp-bessel-estimates}(ii). After changing constants once more to absorb the finitely many values of $(n,m)$, the following estimates hold uniformly.

If $1\le m\le n$, then
\begin{equation}\label{eq:uniform-proof-near-norms}
\norm{u^B_{n,m}}_{L^p(\D)}^p\asymp_p
\begin{cases}
\left(\dfrac nm\right)^{(p-2)/3},&1\le p<4,\\[8pt]
 n^{2/3}m^{-2/3}(1+\log m),&p=4,\\[4pt]
 n^{p/3-2/3}m^{-p/6},&p>4.
\end{cases}
\end{equation}
For $n\ge1$ and $m\ge n$, one has
\begin{equation}\label{eq:uniform-proof-far-norms}
\norm{u^B_{n,m}}_{L^p(\D)}^p\asymp_p
\begin{cases}
1,&1\le p<4,\\[4pt]
1+\log x^B_{n,m},&p=4,\\[4pt]
(x^B_{n,m})^{p/2-2}n^{4/3-p/3},&p>4.
\end{cases}
\end{equation}
Finally, for $n=0$
\begin{equation}\label{eq:uniform-proof-fixed-norms}
\norm{u^B_{n,m}}_{L^p(\D)}^p\asymp_p
\begin{cases}
1,&1\le p<4,\\[4pt]
1+\log x^B_{n,m},&p=4,\\[4pt]
(x^B_{n,m})^{p/2-2},&p>4.
\end{cases}
\end{equation}
These are exactly the estimates in the preceding results after substituting the normalizing factors into Lemma \ref{lem:lp-normalization}.

We prove the upper bound first. If $1\le p\le2$, H\"older's inequality and $\norm{u_\lambda}_{L^2(\D)}=1$ give
\[
\norm{u_\lambda}_{L^p(\mathbb D)}\le \pi^{1/p-1/2},
\]
so $b_p=0$ in this range. This is consistent with the above three estimates which also imply $\norm{u_\lambda}_{L^p(\mathbb D)}$ is bounded by a constant.

If $2<p<4$. From \eqref{eq:uniform-proof-near-norms}, when $m\le n$,
\[
\norm{u^B_{n,m}}_{L^p(\D)}^p\le C_p (x^B_{n,m})^{(p-2)/3}.
\]
The case $m\ge n\ge1$ and $n=0$ are bounded by a constant by \eqref{eq:uniform-proof-far-norms} and \eqref{eq:uniform-proof-fixed-norms}. Thus
\[
\norm{u^B_{n,m}}_{L^p(\D)}\le C_p(\lambda^B_{n,m})^{(p-2)/(6p)}.
\]
At $p=4$, the logarithmic factors are still harmless: if $m\le n$, then $m^{-2/3}(1+\log m)\le C$, while if $m\ge n\ge1$ or $n=0$, then $1+\log x^B_{n,m}\le C(x^B_{n,m})^{2/3}$. Hence the same exponent holds at $p=4$.

Let $4<p<8$. If $m\le n$, then \eqref{eq:uniform-proof-near-norms} gives
\[
\norm{u^B_{n,m}}_{L^p(\D)}^p\le C_p(x^B_{n,m})^{p/3-2/3}.
\]
If $m\ge n\ge1$, then the last line of \eqref{eq:uniform-proof-far-norms} gives
\[
\norm{u^B_{n,m}}_{L^p(\D)}^p\le C_p(x^B_{n,m})^{p/2-2}.
\]
The $n=0$ estimate gives the same bound. Since $p<8$, one has $p/2-2\le p/3-2/3$, and therefore all cases satisfy
\[
\norm{u^B_{n,m}}_{L^p(\D)}\le C_p(\lambda^B_{n,m})^{(p-2)/(6p)}.
\]

Finally suppose $p\ge8$. Then $p/2-2\ge p/3-2/3$. The same estimates as in the case $4<p<8$ imply
\[
\norm{u^B_{n,m}}_{L^p(\D)}^p\le C_p(x^B_{n,m})^{p/2-2}.
\]
Thus
\[
\norm{u^B_{n,m}}_{L^p(\D)}\le C_p(\lambda^B_{n,m})^{1/4-1/p}.
\]
This proves the upper bounds.

We now prove the lower bounds. If $1\le p<2$, interpolate between $L^p$ and $L^4$:
\[
\norm{u}_{L^2(\D)}\le \norm{u}_{L^p(\D)}^\theta \norm{u}_{L^4(\D)}^{1-\theta},\qquad
\frac12=\frac\theta p+\frac{1-\theta}{4}.
\]
Here $\theta=p/(4-p)$. Using the $L^4$ upper bound just proved, $\norm{u}_{L^4(\D)}\le C\lambda^{1/12}$, and $\norm{u}_{L^2(\D)}=1$, we get
\[
\norm{u}_{L^p(\D)}\ge c_p\lambda^{(p-2)/(6p)}.
\]
For $p=2$ the lower bound is the normalization. For $2<p\le4$, H\"older's inequality gives
\[
\norm{u_\lambda}_{L^p(\mathbb D)}\ge \pi^{1/p-1/2}.
\]
These lower bounds for $1\le p\le4$ can also be derived from the three estimates \eqref{eq:uniform-proof-near-norms}, \eqref{eq:uniform-proof-far-norms}, \eqref{eq:uniform-proof-fixed-norms} above.

It remains to consider $p>4$. If $m\le n$, then \eqref{eq:uniform-proof-near-norms} gives
\[
\norm{u^B_{n,m}}_{L^p(\D)}^p\ge c_p n^{p/3-2/3}m^{-p/6}\ge c_p(x^B_{n,m})^{(p-4)/6},
\]
where we used $m\le n$ and $x^B_{n,m}\asymp n$ in this range. If $m\ge n\ge1$, then \eqref{eq:uniform-proof-far-norms} gives
\[
\norm{u^B_{n,m}}_{L^p(\D)}^p\ge c_p(x^B_{n,m})^{p/2-2}n^{4/3-p/3}=c_p(x^B_{n,m})^{(p-4)/6}\left(\frac{x^B_{n,m}}{n}\right)^{(p-4)/3}\ge c_p(x^B_{n,m})^{(p-4)/6}.
\]
For $n=0$, \eqref{eq:uniform-proof-fixed-norms} gives the even stronger lower bound $c_p(x^B_{n,m})^{p/2-2}\ge c_p(x^B_{n,m})^{(p-4)/6}$. Since $\lambda^B_{n,m}=(x^B_{n,m})^2$, this proves
\[
\norm{u^B_{n,m}}_{L^p(\D)}\ge c_p(\lambda^B_{n,m})^{(p-4)/(12p)}
\]
for all $p>4$.

The sharpness of the lower bounds follows from fixed $m$ and $n\to\infty$ for $1\le p\le2$, fixed $n$ and $m\to\infty$ for $2\le p\le4$, and $m\asymp n\to\infty$ for $p>4$. The sharpness of the upper bounds follows from fixed $n$ and $m\to\infty$ for $1\le p\le2$, fixed $m$ and $n\to\infty$ for $2<p<8$, fixed $n$ and $m\to\infty$ again for $p\ge8$. This completes the proof of Theorem \ref{thm:uniform}.
\end{proof}

\section*{Acknowledgements}

The work is supported by National Key R\&D Program of China No. 2022YFA1007400 and National Natural Science Foundation of China No. 12525106. The author would like to express his sincere gratitude to his advisor, Professor Long Jin, for suggesting this problem and for many valuable comments, suggestions, and discussions concerning the results of this paper. The author would also like to thank Weiwei Wang and Zuoqin Wang for their suggestions regarding the question.

\bibliographystyle{amsalpha}
\bibliography{refs}

\end{document}